\documentclass[12pt]{amsart}

\usepackage{epsfig}
\usepackage{amsmath}
\usepackage{amssymb,amsthm}
\usepackage{graphicx}
\usepackage{booktabs}

\usepackage{pst-node}
\usepackage{tikz}
\usepackage{enumerate}
\usepackage[backref]{hyperref}
\usepackage{a4wide}
\usepackage[normalem]{ulem}

\usepackage[margin=2.9cm]{geometry}

\newtheorem{propo}{Proposition}[section]

\newtheorem{lemma}[propo]{Lemma}

\newtheorem{theo}[propo]{Theorem}

\newtheorem{prop}[propo]{Proposition}

\newtheorem{cond}[propo]{Condition}

\newcommand{\bl}{\begin{lemma}}
\newcommand{\el}{\end{lemma}}

\def\Cay{{\rm Cay}}

\usepackage{indentfirst,latexsym,bm}

\def\PSL{{\rm PSL}}
\def\PGL{{\rm PGL}}

\def\U{{\rm U}}
\def\Sym{{\rm Sym}}

\def\Aut{{\rm Aut}}

\def\K{{\rm K}}
\def\AGL{{\rm AGL}}

\begin{document}
\title{On  2-distance-transitive  circulant digraphs}

\thanks{Supported by NSFC (12271524, 12331013), NSF of Hunan (2026JJ50358) and the Scientific Research Project of the Education Department of Hunan Province (24A0142).}

\author[W. Jin]{Wei Jin}
\address{Wei Jin\\
School of Mathematics and Computational Science\\
Hunan Research Center of the Basic Discipline Fundamental Algorithmic Theory and Novel Computational Methods\\
Xiangtan University\\
Xiangtan, Hunan, 411105, P.R. China}
\email{jinwei@xtu.edu.cn}

\author[C. X. Li]{Cai Xia Li}
\address{Cai Xia Li\\
School of Mathematics and Computational Science\\
Xiangtan University\\
Xiangtan, Hunan, 411105, P.R. China}
\email{caixial@yeah.net}

\author[P. S. Li]{Ping Shan Li}
\address{Ping Shan Li\\
School of Mathematics and Computational Science\\
Hunan Research Center of the Basic Discipline Fundamental Algorithmic Theory and Novel Computational Methods\\
Xiangtan University\\
Xiangtan, Hunan, 411105, P.R. China}
\email{lips@xtu.edu.cn}

\author[X. L. Sun]{Xiao Lin Sun}
\address{Xiao Lin Sun\\
School of Mathematics and Computational Science\\
Xiangtan University\\
Xiangtan, Hunan, 411105, P.R. China}
\email{995164982@qq.com}

\author[J. Wu]{Jue Wu}
\address{Jue Wu\\
School of Mathematics and Computational Science\\
Xiangtan University\\
Xiangtan, Hunan, 411105, P.R. China}
\email{202631510198@smail.xtu.edu.cn}

\author[F. Yang]{Fan Yang}
\address{Fan Yang\\
School of Mathematics and Computational Science\\
Xiangtan University\\
Xiangtan, Hunan, 411105, P.R. China}
\email{202631510207@smail.xtu.edu.cn}


\maketitle

\begin{abstract}

Circulant digraphs form a prominent class of  Cayley digraphs defined on  finite cyclic groups. Building on the existing  classification of $2$-arc-transitive circulant graphs, this paper presents a complete classification of   $2$-distance-transitive circulant digraphs.
Our main theorem establishes   that  every connected $2$-distance-transitive circulant digraph  is isomorphic to one of the following:
the undirected cycle  $C_n$, the complete bipartite graph $\K_{\frac{n}{2},\frac{n}{2}}$, the complete multipartite graph $\K_{m[b]}$ with $m\geq 3,b\geq 2$,  the graph $\K_{\frac{n}{2},\frac{n}{2}}-\frac{n}{2}\K_2$ for  odd $\frac{n}{2}$,
prime-order Paley graphs,  the directed cycle $\overrightarrow{C}_n$, the oriented graph \(G(p^m,r)\) satisfying Condition~\ref{p-power-normal-2dt-cond},
the oriented graph  \(  C_r(b,1)\) with $r\geq 3,b\geq 2$ and $rb=n$,  the lexicographic product oriented graph \(  G(p^m,r)[\overline{\K}_d]\)
where \(G(p^m,r)\) obeys  Condition~\ref{p-power-normal-2dt-cond}.

\medskip
\noindent{\bf Keywords:} circulant digraph, 2-distance-transitive digraph, normal subgroup.

\noindent{\bf 2020 Mathematics Subject Classification:} 05C25, 20B25.
\end{abstract}

\vspace{2mm}

\section{Introduction}

In this paper, we establish a complete classification of 2-distance-transitive  circulant digraphs, a prominent class of symmetric digraphs in algebraic graph theory. We begin by introducing the fundamental definitions, standard notation, and necessary research background to frame our investigation.

A finite \emph{digraph} (short for \emph{directed graph}) $\Gamma$ is a pair $(V(\Gamma),\operatorname{Arc}(\Gamma))$ consisting of a finite vertex set $V(\Gamma)$ and an arc set $\operatorname{Arc}(\Gamma)\subseteq V(\Gamma)\times V(\Gamma)$. An element $(u,v)\in \operatorname{Arc}(\Gamma)$ is a \emph{directed edge} from $u$ to $v$, denoted by $u\rightarrow v$, the ordered pair $(u,v)$ is then called an \emph{arc} of $\Gamma$. For a vertex $v\in V(\Gamma)$, the set of \emph{in-neighbours} of $v$ is $\Gamma^-(v)=\{u\in V\mid u\rightarrow v\}$, and the set of \emph{out-neighbours} of $v$ is $\Gamma^+(v)=\{u\in V\mid v\rightarrow u\}$. A digraph $\Gamma$ is called \emph{$k$-regular} if $|\Gamma^-(v)|=|\Gamma^+(v)|=k$ for every $v\in V(\Gamma)$,  it is \emph{regular} if it is $k$-regular for some positive integer $k$, and this integer $k$ is then the \emph{valency} of $\Gamma$.
An \emph{oriented graph} is a digraph with no 2-cycles (that is, no pairs of opposite arcs).
A \emph{graph} is a digraph in which every arc is bidirectional: for all vertices $u,v$, $(u,v)$ is an arc exactly when $(v,u)$ is an arc. If $\Gamma$ is a graph and $v\in V(\Gamma)$, then $\Gamma^+(v)=\Gamma^-(v)$, and we denote this common set by $\Gamma(v)$.

A digraph (graph) $\Gamma$ is called a \emph{circulant digraph} (\emph{circulant graph}) if its full automorphism group contains a cyclic subgroup that acts semiregularly and transitively on the vertex set. As one of the most fundamental and extensively studied families of algebraic graphs, circulant digraphs (graphs) possess inherent cyclic symmetry and highly regular structural properties. Owing to their elegant combinatorial features and wide-ranging applications in network design, coding theory and parallel computing, they have long been a central object in the study of vertex-transitive digraphs (graphs).

The study  of  circulant digraphs originated naturally from the analysis of primitive group actions. According to a classical theorem of Schur (see \cite[Theorem~25.2]{Wielandt-book}), every   circulant graph with a primitive  automorphism group acting  on its  vertex set is either a complete graph or a circulant graph of prime order. This landmark  result established  a  fundamental framework for  subsequent research and inspired extensive  studies on   circulant graphs under various refined group-theoretic constraints.

While primitivity imposes  a strong global symmetry condition, subsequent  research has focused on   the more flexible symmetry property of arc-transitivity. The systematic investigation  of arc-transitive circulant digraphs was initiated in the 1970s by Chao and Wells \cite{Chao1971,Chao1973}, who pioneered the exploration of symmetric circulant graphs and digraphs.
A series of influential  advances have greatly  enriched  this research field.  Alspach, Conder, Maru\v{s}i\v{c} and Xu  \cite[Theorem~1.1]{ACMX-1996}  completely classified  $2$-arc-transitive circulant graphs,  Li, Maru\v{s}i\v{c} and Morris \cite{LMM-circulant-2001} achieved a classification of  arc-transitive circulants of square-free order.  Xu, Baik and Sim \cite{XBS-2004} fully  characterized  arc-transitive circulant digraphs of odd prime-power order.   Kov\'{a}cs \cite{Kovacs-2004} and Li \cite{LCH-circulant-2005} independently established  reductive structural characterizations for  connected arc-transitive circulant digraphs. Furthermore,  Ara\'{u}jo, Bentz, Dobson, Konieczny and Morris \cite{ABDKM-2018} investigated the automorphism groups of circulant digraphs and completed  a classification for such digraphs with   small valency.  Chen, Jin and Li \cite{CJL-2019} settled the classification of  2-distance-transitive circulant graphs in 2018.
Building upon these milestone works, Li, Xia and Zhou \cite{LXZ-at-circ-2021} further  provided a refined and  explicit structural characterization of all arc-transitive circulant digraphs. Very  recently, the complete classification of finite locally-primitive circulant graphs was established  in \cite{JZ-2026}.

A digraph  $\Gamma$ is said to be  \emph{$2$-distance-transitive} if, for any two distinct pairs of vertices  $(u_1,v_1)$ and $(u_2,v_2)$ with directed distance $i$ ($i\in \{1,2\}$), there exists an  element of $\Aut(\Gamma)$ mapping $(u_1,v_1)$ to $(u_2,v_2)$. Every $2$-arc-transitive digraph is $2$-distance-transitive, while   the converse fails. For example, any   Paley tournament with at least 7 vertices  is $2$-distance-transitive but not $2$-arc-transitive. Although considerable  progress has been made on  primitive, 2-arc-transitive and 2-distance-transitive  circulant graphs, the classification of 2-distance-transitive circulant digraphs  remains an   open problem.

As a natural and meaningful extension of existing classification results, the characterization of  2-distance-transitive  circulant digraphs not only improves  the classification theory of symmetric circulant digraphs, but also provides new insights into the intrinsic interplay between local group actions and global graph symmetries. In this paper, we  establish a complete classification of 2-distance-transitive circulant digraphs.

Our main theorem is the following:

\begin{theo}\label{th-2dt-circ}
Let $T$ be a cyclic group of order $n\geq 3$ and let $S\subseteq T$ be such that $T=\langle S\rangle$. Suppose that  $\Gamma=\operatorname{Cay}(T,S)$ is a $2$-distance-transitive circulant digraph. Then  one of the following two cases holds.

\begin{enumerate}[{\rm (I)}]
\item $S=-S$, and $\Gamma$ is isomorphic to one of the following graphs:
\begin{itemize}
\item[(1)] the complete bipartite graph $\K_{\frac{n}{2},\frac{n}{2}}$;
\item[(2)] the complete multipartite graph $\K_{m[b]}$ with $m\geq 3,b\geq 2$;
\item[(3)] the $\K_{\frac{n}{2},\frac{n}{2}}-\frac{n}{2}\K_2$ with  $\frac{n}{2}$ is odd;
\item[(4)] the cycle  $C_n$;
\item[(5)] prime-order Paley graphs.
\end{itemize}

\item $S\cap -S=\varnothing$,    and $\Gamma$  is isomorphic to one of the following oriented graphs:
\begin{itemize}
\item[(1)]  the directed cycle $\overrightarrow{C}_n$;


\item[(2)] the oriented graph \(G(p^m,r)\) satisfying  Condition~\ref{p-power-normal-2dt-cond};

\item[(3)]  the oriented graph \(  C_r(b,1)\) with $r\geq 3,b\geq 2$ and $rb=n$;

\item[(4)] the lexicographic product oriented graph \(  G(p^m,r)[\overline{\K}_d]\)
where \(G(p^m,r)\) satisfies Condition~\ref{p-power-normal-2dt-cond}.
\end{itemize}

\end{enumerate}

In particular, the digraphs listed in cases (II) (1)-(2) are normal, while those  in cases (II) (3)-(4) are non-normal.

\end{theo}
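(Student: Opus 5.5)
The plan is to split at once according to whether $S=-S$ or not. A $2$-distance-transitive digraph is in particular arc-transitive, so $\Aut(\Gamma)_1$ is transitive on $S=\Gamma^+(1)$. If some $s\in S$ has $-s\in S$, arc-transitivity carries the pair of opposite arcs $(1,s),(s,1)$ to every arc at $1$, so every arc has its reverse and $S=-S$. Otherwise $S\cap -S=\varnothing$. In the undirected case $\Gamma$ is a connected $2$-distance-transitive circulant graph, and I will simply invoke the classification of Chen, Jin and Li \cite{CJL-2019}. Their list gives exactly (I)(1)--(5): $\K_{n/2,n/2}$, $\K_{m[b]}$, $\K_{n/2,n/2}-\frac n2\K_2$ with $n/2$ odd, $C_n$, and the Paley graphs of prime order. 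All of the real work is therefore in the oriented case (II).

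For (II), put $A=\Aut(\Gamma)$ and use the Kov\'acs/Li reduction for arc-transitive circulant digraphs \cite{Kovacs-2004,LCH-circulant-2005}, in the explicit form of Li, Xia and Zhou \cite{LXZ-at-circ-2021}. This reduction says that $\Gamma$ is either normal, i.e.\ $T\lhd A$ and $A\le T\rtimes\Aut(T)$, or a normal circulant $\Sigma$ blown up by a lexicographic product $\Sigma[\overline{\K}_d]$, or a deleted-lexicographic variant of this.

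\textbf{Normal case.} Here $A_1\le \Aut(T)\cong\mathbb Z_n^{*}$ is abelian and acts regularly on $S$, so $S=aH$ for a subgroup $H\le\mathbb Z_n^*$. Next I impose transitivity of $A_1$ on $\Gamma^{+2}(1)=(S+S)\setminus(S\cup\{0\})$. This forces $S+S$ minus $\{0\}\cup S$ to be a single $H$-orbit. Working prime by prime, I expect $n=p^m$ with $H$ of order $r$, and that $2$-distance transitivity is exactly Condition~\ref{p-power-normal-2dt-cond}, giving $G(p^m,r)$; the trivial case $|S|=1$ gives $\overrightarrow{C}_n$. To rule out composite $n$, write $T=T_p\times T_{p'}$. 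Valency and orbit counting would then show that distance-$2$ vertices split into at least two $H$-orbits, because their projections to distinct primary components behave differently, unless $|S|=1$.

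\textbf{Non-normal case.} A non-normal $\Gamma$ should have a nontrivial block system given by cosets of a subgroup $K\le T$ of order $d$ with $\Gamma\cong\Sigma[\overline{\K}_d]$, or a deleted variant. Since $\Gamma$ is oriented, the deleted variants must be handled by showing that the distances inside a block are incompatible with $2$-distance-transitivity. The quotient $\Sigma$ is again a $2$-distance-transitive oriented circulant. The exception is when $\Sigma$ is a directed cycle $\overrightarrow{C}_r$, which gives $C_r(b,1)=\overrightarrow{C}_r[\overline{\K}_b]$; here $r\ge3$ is needed to stay oriented with distance-$2$ vertices. Otherwise $\Sigma$ is normal, hence $G(p^m,r)$, giving (II)(4). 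Conversely, each lexicographic product is $2$-distance-transitive because $\Sym(d)\wr\Aut(\Sigma)$ acts and distances pass through to the quotient, and these products are non-normal because $\Sym(d)^{|\Sigma|}$ fixes $1$ and is not in $\Aut(T)$.

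\textbf{Main obstacle.} The hardest step is the normal case: the arithmetic analysis of when $(aH+aH)\setminus(aH\cup\{0\})$ is a single $H$-orbit in $\mathbb Z_n$. This includes excluding composite non-prime-power $n$ and matching the answer exactly to Condition~\ref{p-power-normal-2dt-cond}. My first approach would be to reduce modulo $p$ and use character sums or a count of solutions $x+y=z$ with $x,y\in aH$. This is the Paley-tournament style argument, extended to prime powers by looking at the $p$-adic valuation of $s_1+s_2$.
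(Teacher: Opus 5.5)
The overall shape of your argument matches the paper's: arc-transitivity gives $S=-S$ or $S\cap -S=\varnothing$, you cite Chen--Jin--Li for graphs, and you use the Li--Xia--Zhou decomposition for the oriented case. There is a genuine gap, though. The normal case carries essentially all of the content of part (II), and you do not prove it: ``I expect $n=p^m$'' and ``projections to distinct primary components behave differently'' are expectations, not arguments. Three concrete pieces are missing.

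(a) Even $n$. In the normal case $S\subseteq\mathbb Z_n^\times$, so every element of $S$ is odd and $v+S$ consists of even residues. Hence $\Gamma^+(v)\subseteq\Gamma_2^+(0)$. Since $|\Gamma_2^+(0)|$ divides $|A_1|=|S|$, this gives $\Gamma^+(v)=\Gamma_2^+(0)$. You then need a structural result such as Lemma~\ref{prop-1=2}: an arc-transitive oriented graph with $\Gamma^+(v)=\Gamma_2^+(u)$ is $\overrightarrow{C}_n$ or $C_r(m,1)$. You also need the non-normality of $C_r(m,1)$.

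(b) Odd $n$ with two distinct prime divisors $p,q$. The paper first reduces to order $pq$ through normal quotients, using the classification of primitive groups with a cyclic regular subgroup. It then needs real additive combinatorics:
\begin{enumerate}
\item project $K+K=K\cup R$ to $\mathbb F_p$ and $\mathbb F_q$;
\item use Cauchy--Davenport and Vosper, together with the fact that only $\pm1$ stabilises an interval, to show each projection is $\mathbb F_p^\times$ or has index $2$;
\item use Goursat's lemma with a fibrewise Cauchy--Davenport count to show $K$ is the graph of a homomorphism between the projections, which is incompatible with $p\neq q$.
\end{enumerate}

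(c) $n=p^m$. You need $|H|\mid p-1$. Otherwise $H\supseteq 1+p^{m-1}\mathbb Z_{p^m}$, so $S$ is a union of cosets of $p^{m-1}\mathbb Z_{p^m}$ and $\Gamma$ is a non-normal lexicographic product; this is what Xu--Baik--Sim supplies. You also need an argument that $2$-distance-transitivity forces $H\cap(H+H)\neq\varnothing$ and $|H+H|=2r$, for instance via the free action of $H$ on $H+H\subseteq\mathbb Z_{p^m}^\times$.

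Character sums or $p$-adic valuations might replace parts of this, but as written none of (a)--(c) is delivered.

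In the non-normal case your route is also the paper's: kill the factors $\K_{n_i}$, then pass $2$-distance-transitivity down to $\Gamma_0$ using $\Aut(\Gamma)\cong S_b\wr\Aut(\Gamma_0\times\K_{n_1}\times\cdots\times\K_{n_r})$. However, the mechanism you name for killing the factors does not work as stated. Two vertices in one fibre $\{u\}\times\K_{n_i}$ are at distance at least the directed girth of $\Gamma_0$, which is at least $3$. So these pairs are invisible to $2$-distance-transitivity.

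The obstruction lies among distance-$2$ pairs instead. Take $d_{\Gamma_0}(u,v)=2$ and use $n_1\ge 4$. Then $((u,0,\dots,0),(v,0,\dots,0))$ and $((u,0,\dots,0),(v,1,0,\dots,0))$ are both at distance $2$. No element of $\Aut(\Gamma_0)\times S_{n_1}\times\cdots\times S_{n_r}$ maps one to the other, because such elements preserve equality of the $\K_{n_1}$-coordinates.

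A minor point: the Chen--Jin--Li list also contains $\K_n$, as the paper's own proof records. So it does not give exactly (I)(1)--(5); the statement as written omits $\K_n$.
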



\bigskip
\bigskip

\section{Preliminaries}

In this section we collect  some definitions concerning groups and digraphs,  and we also provide several results that will be used in the subsequent discussion.

Throughout this paper, all digraphs are finite, simple and connected. For group-theoretic terminology not defined here we refer the reader to \cite{Cameron-1,DM-1,Wielandt-book}.

\subsection{Groups }

Let $G$ be a permutation group on a set $\Omega$. We say that $G$ is \emph{semiregular} on $\Omega$ if $G_\alpha=1$ for every $\alpha\in\Omega$, and \emph{regular} if it is transitive and semiregular. The action of $G$ on $\Omega$ is \emph{faithful} if the only element fixing every point of $\Omega$ is the identity.

Let $G$ act transitively on $\Omega$. A \emph{partition} of $\Omega$ is a set $\mathcal B=\{B_1,B_2,\dots,B_n\}$ of non-empty subsets such that $B_i\cap B_j=\varnothing$ whenever $i\ne j$ and $\Omega=\bigcup_i B_i$. A partition $\mathcal B$ is \emph{$G$-invariant} if for every $g\in G$ and every $B_i\in\mathcal B$, we have $B_i^g\in\mathcal B$. The partitions into singletons and into one part are the \emph{trivial} partitions, while all other partitions are non-trivial. Each member of a $G$-invariant partition is called a \emph{block} of $G$. The whole set $\Omega$ and the singletons  are \emph{trivial blocks}, all others are \emph{non-trivial}. If $N$ is an intransitive normal subgroup of $G$, then each $N$-orbit is a block of $G$, and the set of $N$-orbits forms a $G$-invariant partition of $\Omega$. The action of $G$ on $\Omega$ is \emph{primitive} if it has no non-trivial $G$-invariant partitions, otherwise it is \emph{imprimitive}. There is a celebrated classification of finite primitive permutation groups into eight types, mainly due to O'Nan and Scott, see \cite{LPS-1}.

A transitive permutation group $G\le \operatorname{Sym}(\Omega)$ is \emph{quasiprimitive} if every non-trivial normal subgroup of $G$ is transitive on $\Omega$. Quasiprimitivity generalizes primitivity, since every normal subgroup of a primitive group is transitive, but there exist quasiprimitive groups that are not primitive. For more information about quasiprimitive permutation groups, we refer the reader to \cite{Praeger-1993-onanscott,Praeger-2,Praeger-2003-biq}.

 Let $T$ be a finite group and set  $\Omega=T$. For each $g\in
T$, define a  map of $T$ into itself as  follows:
$R(g): x\mapsto xg, \forall  x\in T.$
The map $R(g)$ is called the \emph{right multiplication} induced by
$g$. Furthermore,  $R(T)$ forms  a  subgroup of $\Sym(T)$ that
is  isomorphic to $T$ and  acts regularly on $T$.
 The \emph{holomorph} of \( T \), denoted by \( \operatorname{Hol}(T) \), is defined as  the semidirect product of \( R(T) \) and its automorphism group \( \operatorname{Aut}(T) \), namely
\(
\operatorname{Hol}(T) = R(T) \rtimes \operatorname{Aut}(T),
\)
where each automorphism \( \alpha \in  \operatorname{Aut}(T) \) acts naturally on \( T \)  by mapping \( x \in T \) to   \( \alpha(x) \).

We denote by $\mathbb Z_n$ the cyclic group of order $n$.

\subsection{Graphs}

Let $\Gamma$ be a digraph. Denote by $V(\Gamma)$, $\operatorname{Arc}(\Gamma)$ and $\operatorname{Aut}(\Gamma)$ its vertex set, arc set and automorphism group, respectively. The size of the vertex set is the \emph{order} of $\Gamma$. For a subgroup $G\le \operatorname{Aut}(\Gamma)$, let $G_v$ be the stabilizer of $v$ in $G$, and let $G_v^{\Gamma^+(v)}$ be the permutation group induced by $G_v$ on $\Gamma^+(v)$. For a positive integer $s$, an \emph{$s$-arc} of $\Gamma$ is a sequence $(v_0,v_1,\dots,v_s)$ such that $v_i$ and $v_{i+1}$ are adjacent and $v_{j-1}\ne v_{j+1}$ for $0\le i\le s-1$ and $1\le j\le s-1$. In particular, $1$-arcs are simply called \emph{arcs}.
The digraph $\Gamma$ is said to be \emph{$G$-vertex-transitive} or \emph{$(G,s)$-arc-transitive} if $G$ is transitive on the vertex set or on the set of $s$-arcs, respectively.
We say that $\Gamma$ is \emph{$(G,2)$-distance-transitive} if, for any two distinct vertex pairs $(u_1,v_1)$ and $(u_2,v_2)$ satisfying $d_{\Gamma}(u_1,v_1) = d_{\Gamma}(u_2,v_2) = i$ with $i\in \{1,2\}$, some element of $G$ maps $(u_1,v_1)$ to $(u_2,v_2)$.  We say that $\Gamma$ is \textit{$G$-distance-transitive} if for every integer $i \leq \operatorname{diam}(\Gamma)$, $G$ is transitive on the set of ordered vertex pairs at distance $i$.
If $G=\operatorname{Aut}(\Gamma)$, we drop the prefix ``$G$-''.


For a digraph $\Gamma$, its \emph{complement} $\overline{\Gamma}$ is the digraph with vertex set $V(\Gamma)$ and two vertices adjacent if and only if they are not adjacent in $\Gamma$.

For a positive integer $n$, $\K_n$ denotes the complete graph on $n$ vertices, and for $n\ge 3$, $C_n$ denotes a cycle of length $n$, while $\overrightarrow{C}_n$ denotes a directed cycle of length $n$. For $m,n\ge 2$, we denote by $\K_{m[n]}$ the complete multipartite graph with $m$ parts of size $n$,  that is, two vertices are adjacent if and only if they lie in distinct parts.

Let $\Gamma$ be a digraph and let $G\le\operatorname{Aut}(\Gamma)$. Suppose $\mathcal B=\{B_1,\dots,B_n\}$ is a $G$-invariant partition of $V(\Gamma)$. The \emph{quotient digraph} $\Gamma_{\mathcal B}$ of $\Gamma$ relative to $\mathcal B$ is the digraph with vertex set $\mathcal B$ such that $(B_i,B_j)$ is an arc  of $\Gamma_{\mathcal B}$ if and only if there exist $x\in B_i$, $y\in B_j$ with $(x,y)$ an arc  of $\Gamma$. We say that $\Gamma_{\mathcal B}$ is \emph{nontrivial} if $1<|\mathcal B|<|V(\Gamma)|$. Since $\mathcal B$ is $G$-invariant, $G$ induces a subgroup of automorphisms of $\Gamma_{\mathcal B}$.

Let $q=p^f$ be a prime power such that $q\equiv 1 \pmod{4}$. Let
$\mathbb{F}_q$ be the finite field of order $q$. Then the  \emph{Paley graph} of order $q$
 is the graph with vertex set  $\mathbb{F}_q$, and two distinct
vertices $u,v$ are adjacent if and only if $u-v$ is a nonzero square
in $\mathbb{F}_q$. The congruence condition on $q$ implies that $-1$ is a
square in $\mathbb{F}_q$, and hence each Paley graph  is an undirected graph.

Let $q=p^f$ be a prime power with $q\equiv 3\pmod 4$ and $q\ge 3$. The \emph{Paley tournament} $\mathrm{PTr}(q)$ is the directed graph with vertex set  $\mathbb F_q$, in which there is an arc from $x$ to $y$ (denoted $x\to y$) if and only if $y-x$ is a non-zero square in  $\mathbb F_q$. Explicitly,
\[
V(\mathrm{PTr}(q))=\mathbb F_q,\qquad
\operatorname{Arc}(\mathrm{PTr}(q))=\{(x,y)\in\mathbb F_q\times\mathbb F_q\mid y-x\in S\},
\]
where $S=\{s\in\mathbb F_q^\times\mid s\text{ is a square}\}$. Since $-1$ is a non-square when $q\equiv 3\pmod 4$, we have $S\cap(-S)=\varnothing$ and $S\cup(-S)=\mathbb F_q^\times$, hence for any two distinct vertices exactly one of $x\to y$ or $y\to x$ holds, so $\mathrm{PTr}(q)$ is a tournament. Equivalently, $\mathrm{PTr}(q)=\operatorname{Cay}(T,S)$ with $T=(\mathbb F_q,+)$, and its valency is $(q-1)/2$.

The automorphism group of $\mathrm{PTr}(q)$ contains all transformations of the form
\[
x\mapsto a x^\sigma + b,\qquad a\in S,\; \sigma\in\operatorname{Gal}(\mathbb F_q),\; b\in\mathbb F_q,
\]
that is,
\[
T\rtimes (S\rtimes\operatorname{Gal}(\mathbb F_q)) \le \operatorname{Aut}(\mathrm{PTr}(q)).
\]
Here $T$ is the translation group $\{x\mapsto x+b\}$ and $S$ is the multiplicative group of non-zero squares.

For a finite group $T$ and a subset $S\subseteq T$ with $1\notin S$, the \emph{Cayley digraph} $\operatorname{Cay}(T,S)$ of $T$ with respect to $S$ is the digraph with vertex set $T$ and arc set $\{(g,sg)\mid g\in T,\ s\in S\}$. In particular, $\operatorname{Cay}(T,S)$ is connected if and only if $T=\langle S\rangle$. The group $R(T)=\{\sigma_t\mid t\in T\}$ of right multiplications $\sigma_t:x\mapsto xt$ is a subgroup of $\operatorname{Aut}(\Gamma)$ and acts regularly on the vertex set. Indeed, a digraph is a Cayley digraph if and only if it admits a regular group of automorphisms. For a Cayley digraph $\Gamma=\operatorname{Cay}(T,S)$, let
\[
\operatorname{Aut}(T,S)=\{\alpha\in\operatorname{Aut}(T)\mid S^\alpha=S\}.
\]
It was shown in \cite{Godsil1983} that the normalizer of $R(T)$ in $\operatorname{Aut}(\Gamma)$ is $R(T)\rtimes\operatorname{Aut}(T,S)$. If $R(T)\trianglelefteq \operatorname{Aut}(\operatorname{Cay}(T,S))$, then $\operatorname{Cay}(T,S)$ is called a \emph{normal Cayley digraph} of $T$ (see \cite{Xu98}). Circulant digraphs are precisely the Cayley digraphs on  cyclic groups, a circulant digraph is normal if its automorphism group contains a normal regular subgroup.


\subsection{Some lemmas}

The classification of    primitive permutation groups  that contain a
cyclic regular subgroup was independently obtained by Jones \cite{Jones-2002} and Li \cite[Corollary 1.2]{LCH-abelianregular-2003}.
Moreover, by \cite[Theorem 1.2]{LP-circulant-2012}, every   quasiprimitive group with a regular cyclic  subgroup is  primitive.
Hence the family of  quasiprimitive groups with a regular cyclic  subgroup is also  completely  determined in the following lemma.

\begin{lemma}{\rm (\cite{Jones-2002},\cite[Corollary 1.2]{LCH-abelianregular-2003})}\label{primitive-cyclic-1}
A primitive permutation group $G$ of degree $n$  contains a
cyclic regular subgroup if and only if one of the  following holds.
 \begin{enumerate}[{\rm (i)}]
\item $\mathbb{Z}_p\leqslant G\leqslant \AGL(1,p)$, where $n=p$ is a prime;
\item $G=A_n$ with $n\geqslant 5$ odd, or $S_n$, where $n\geqslant 4$;
\item $\PGL(d,q)\leqslant G\leqslant P\Gamma L(d,q)$ and $n=(q^d-1)/(q-1)$;
\item $(G,n)=(\PSL(2,11),11)$, $(M_{11},11)$, $(M_{23},23)$.
\end{enumerate}

Moreover, in cases {\rm (ii)}--{\rm (iv)} $G$ is $2$-transitive.
\end{lemma}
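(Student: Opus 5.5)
The plan is to reduce to $2$-transitive groups by a classical Schur-type argument and then run through the list of finite $2$-transitive groups; since the statement is quoted from \cite{Jones-2002} and \cite[Corollary 1.2]{LCH-abelianregular-2003}, I would follow their route.

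\textbf{Step 1: reduction to the prime and $2$-transitive cases.} Let $C=\langle c\rangle\cong\mathbb{Z}_n$ be a regular subgroup of the primitive group $G$. I would use Schur's theorem on $S$-rings over cyclic groups (the result behind \cite[Theorem~25.2]{Wielandt-book}), in the form: a primitive group containing a regular cyclic subgroup of composite degree is $2$-transitive. The idea is to let the orbits of $G_1$ define a Schur ring over $\mathbb{Z}_n$. Schur's multiplier lemma says that for every $k$ coprime to $n$, each basic set is invariant under multiplication by $k$. From this one shows that a nontrivial basic set generates a proper subgroup when $n$ is composite. That subgroup would give a nontrivial block system, contradicting primitivity, unless the only basic sets are $\{0\}$ and $\mathbb{Z}_n\setminus\{0\}$, i.e.\ unless $G$ is $2$-transitive.

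If $n=p$ is prime and $G$ is not $2$-transitive, Burnside's theorem on transitive groups of prime degree gives $G\le\AGL(1,p)$. This is case (i). Note that $\AGL(1,p)$ itself, and its subgroups of order $p(p-1)/d$, may or may not be $2$-transitive, which is consistent with the statement since (i) carries no $2$-transitivity claim.

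\textbf{Step 2: the $2$-transitive case.} Here I would invoke the classification of finite $2$-transitive groups (which depends on the CFSG) and test each family for an element of order $n$ acting as a single $n$-cycle.

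\emph{Affine groups} $G\le\AGL(d,p)$ of degree $p^d$. The translation subgroup is the unique minimal normal subgroup and its Sylow $p$-subgroups of $\AGL(d,p)$ are known. An element of order $p^d$ in $\AGL(d,p)$ forces $d=1$, or $p^d=4$ with $\AGL(2,2)\cong S_4$. The latter already appears in (ii).

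\emph{Almost simple groups with socle $A_n$.} An $n$-cycle is even exactly when $n$ is odd. This gives $A_n$ for odd $n\ge5$ and $S_n$ for all $n\ge4$.

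\emph{Socle $\PSL(d,q)$ acting on points or hyperplanes.} A Singer cycle of order $(q^d-1)/(q-1)$ lies in $\PGL(d,q)$. One must check it lies in $G$ precisely when $G\ge\PGL(d,q)$, via a Zsigmondy prime and the index $\gcd(d,q-1)$ argument. The exceptions where a smaller group still contains an $n$-cycle are exactly $\PSL(2,11)$ of degree $11$ and some small coincidences absorbed into the $A_n/S_n$ list.

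\emph{Remaining families.} For $\mathrm{Sp}$, unitary, Suzuki, Ree and the sporadic $2$-transitive actions, I would compare the maximal element orders, or use the prime-power structure of cyclic subgroups, against the degree. Only $M_{11}$ and $M_{23}$ (degrees $11$ and $23$) survive, giving (iv).

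\textbf{Step 3: converse and main obstacle.} The converse is a direct verification that each listed group contains the $n$-cycle exhibited above. The $2$-transitivity of cases (ii)--(iv) comes out of Step 1. I expect the main obstacle to be the projective case: showing that an $n$-cycle forces $G\ge\PGL(d,q)$ needs a careful element-order and field-automorphism analysis in $\mathrm{P\Gamma L}(d,q)$. Doing it uniformly over all $(d,q)$, while isolating the exceptional small cases, is the delicate bookkeeping; I would handle it with Zsigmondy primes for $q^d-1$.
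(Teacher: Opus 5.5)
The paper gives no proof of this lemma. It is imported from \cite{Jones-2002} and \cite[Corollary~1.2]{LCH-abelianregular-2003}. Your route is essentially Jones's argument: Burnside's theorem in prime degree, then Schur's theorem that a primitive group of composite degree containing an $n$-cycle is $2$-transitive, then a sweep through the CFSG list of $2$-transitive groups. Li reaches the same list differently, as a corollary of his classification of primitive groups with an \emph{abelian} regular subgroup. There no reduction to $2$-transitivity is available: $S_m\wr S_k$ in product action, with $m\ge 3$ and $k\ge 2$, contains a regular $\mathbb{Z}_m^k$ but is not $2$-transitive. As a citation-level plan yours is sound and ends at the right list, but two steps of the sketch are wrong as written.

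First, the multiplier lemma does not say that each basic set is invariant under multiplication by every unit $k$. It says that $kT$ is again a basic set, so units \emph{permute} the basic sets. A basic set is fixed only by units that fix one of its points. For example, in $\mathbb{Z}_7\rtimes\mathbb{Z}_3$ the basic sets $\{1,2,4\}$ and $\{3,5,6\}$ are interchanged by $k=3$. So your derivation of Schur's theorem does not go through as stated; cite the theorem itself \cite{Wielandt-book}. Its proof also uses, for a prime $p\mid n$, the congruence $\underline{T}^{\,p}\equiv\sum_{t\in T}t^{p}\pmod p$ in $\mathbb{Z}C$.

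Second, $\PSL(2,11)$ of degree $11$ is not an exception inside the projective family. In its natural action of degree $12$ it has no element of order $12$: its element orders are $1,2,3,5,6,11$. In fact that family has no exceptions at all. The degree-$11$ action is one of the exceptional $2$-transitive actions and belongs in your last paragraph. There you must also rule out $A_7$ of degree $15$, which works because $A_7$ has no element of order $15$. Your claim that only $M_{11}$ and $M_{23}$ survive in that paragraph is consistent only because of this misfiling.

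Finally, the projective step is lighter than you expect. Suppose an $n$-cycle $g$ lies in $\PGL(d,q)$. The full preimage of $\langle g\rangle$ in $\mathrm{GL}(d,q)$ is abelian and transitive on nonzero vectors. Hence it is a Singer group $\mathbb{F}_{q^d}^\times$, whose determinants (norms) fill $\mathbb{F}_q^\times$. Therefore $\langle \PSL(d,q),g\rangle=\PGL(d,q)$, and the only real work is excluding $n$-cycles outside $\PGL(d,q)$.
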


We  recall a classic characterization theorem for normal Cayley digraphs.

\begin{lemma}{\rm (\cite[Propositions 1.3 and 1.5]{Xu98})}\label{cayley-normal}
The digraph $\Gamma=\operatorname{Cay}(T,S)$ is a normal Cayley digraph if and only if $\operatorname{Aut}(\Gamma)_1=\operatorname{Aut}(T,S)$, equivalently $\operatorname{Aut}(\Gamma)=T\rtimes\operatorname{Aut}(T,S)$, where $\operatorname{Aut}(\Gamma)_1$ is the stabilizer of the identity vertex.
\end{lemma}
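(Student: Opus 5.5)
The statement to be proved is Lemma~\ref{cayley-normal}: $\Gamma=\operatorname{Cay}(T,S)$ is normal if and only if $\operatorname{Aut}(\Gamma)_1=\operatorname{Aut}(T,S)$, equivalently $\operatorname{Aut}(\Gamma)=R(T)\rtimes\operatorname{Aut}(T,S)$. The plan is to deduce it directly from Godsil's description of the normalizer, $N_{\operatorname{Aut}(\Gamma)}(R(T))=R(T)\rtimes\operatorname{Aut}(T,S)$ \cite{Godsil1983}, together with the Frattini-type factorisation $A=R(T)A_1$. Here $A=\operatorname{Aut}(\Gamma)$, and the factorisation holds because $R(T)$ is regular. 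Two preliminary observations are needed.

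\textbf{First observation.} Every $\alpha\in\operatorname{Aut}(T,S)$, acting naturally on $T$, is an automorphism of $\Gamma$ fixing $1$. Indeed, $(g,sg)\mapsto(g^\alpha,s^\alpha g^\alpha)$ and $s^\alpha\in S$. Hence $\operatorname{Aut}(T,S)\le A_1$ always holds.

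\textbf{Second observation.} Let $B=N_A(R(T))$. By Godsil's result, $B=R(T)\rtimes\operatorname{Aut}(T,S)$. Since $R(T)\cap A_1=1$, Dedekind's modular law gives
\[
B\cap A_1=\operatorname{Aut}(T,S).
\]

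\textbf{Forward direction.} If $R(T)\trianglelefteq A$, then $B=A$. Hence $A_1=B\cap A_1=\operatorname{Aut}(T,S)$, and $A=R(T)A_1=R(T)\rtimes\operatorname{Aut}(T,S)$.

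\textbf{Converse.} If $A_1=\operatorname{Aut}(T,S)$, then $A=R(T)A_1=R(T)\operatorname{Aut}(T,S)=B$. This normalizes $R(T)$, so $\Gamma$ is normal.

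\textbf{Equivalence of the two formulations.} If $A=R(T)\rtimes\operatorname{Aut}(T,S)$, then comparing orders gives $|A_1|=|A|/|T|=|\operatorname{Aut}(T,S)|$. Combined with the first observation, this yields $A_1=\operatorname{Aut}(T,S)$.

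The only delicate point is the identification $B\cap A_1=\operatorname{Aut}(T,S)$. To justify it without quoting Godsil, I would check that an element $\beta\in A_1$ normalizing $R(T)$ induces the map $t\mapsto 1^{\beta^{-1}R(t)\beta}=t'$, where $\beta^{-1}R(t)\beta=R(t')$. This map is a group automorphism of $T$. It agrees with $\beta$ on $T$, since $t^\beta=1^{R(t)\beta}=1^{\beta R(t')}=t'$. Because $\beta$ preserves the out-neighbourhood $S$ of $1$, this automorphism preserves $S$, so it lies in $\operatorname{Aut}(T,S)$. The rest is routine bookkeeping.
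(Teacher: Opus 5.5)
Your proof is correct. The paper gives no argument of its own for this lemma and simply quotes Xu (Propositions 1.3 and 1.5). There, Proposition 1.3 is exactly Godsil's description $N_A(R(T))=R(T)\rtimes\operatorname{Aut}(T,S)$, and your derivation from it via $A=R(T)A_1$ and the modular law is the standard argument behind that citation. Your direct check that $N_A(R(T))\cap A_1=\operatorname{Aut}(T,S)$ (conjugation by $\beta$ on $R(T)$ agrees with $\beta$ on $T$) also makes the argument independent of Godsil's result.
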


\medskip
The \emph{lexicographic product} $\Gamma_1[\Gamma_2]$ of two digraphs $\Gamma_1$ and $\Gamma_2$ is the digraph with vertex set $V(\Gamma_1)\times V(\Gamma_2)$ such that $(u_1,u_2)$ points to   $(v_1,v_2)$ if and only if either $(u_1,v_1)$ is an arc  of $\Gamma_1$, or $u_1=v_1$ and $(u_2,v_2)$ is an arc  of $\Gamma_2$.

For a positive integer $b$ and a digraph $\Gamma$, denote by $b\Gamma$ the digraph consisting of $b$ vertex-disjoint copies of $\Gamma$. The digraph $\Gamma[\overline{\K_b}]-b\Gamma$ has the same vertex set as $\Gamma[\overline{\K_b}]$ and its edge set is obtained by removing all edges of $b\Gamma$ from those of $\Gamma[\overline{\K_b}]$.

For two digraphs $\Gamma_1$ and $\Gamma_2$, the \emph{direct product} (or \emph{tensor product}) $\Gamma_1\times\Gamma_2$ is the digraph with vertex set $V(\Gamma_1)\times V(\Gamma_2)$ such that $(u_1,u_2)$ points to   $(v_1,v_2)$ if and only if $(u_1,v_1)$ is an arc of $\Gamma_1$ and $(u_2,v_2)$ is an arc of $\Gamma_2$. By definition, $\Gamma[\overline{\K_b}]-b\Gamma\cong \Gamma\times\K_b$.

The following result gives a characterization of arc-transitive circulant digraphs.

\begin{theo}{\rm (\cite[Theorem 1.1]{LXZ-at-circ-2021})}\label{arccirculant-explicit-1}
For every connected arc-transitive circulant $\Gamma$ of order $n$, there exist a connected arc-transitive normal circulant $\Gamma_0$ of order $n_0$ and positive integers $n_1,\dots,n_r,b$, where $r\ge 0$, such that the following hold:
\begin{enumerate}[{\rm (1)}]
    \item $\Gamma_0 \not\cong C_4$;
    \item $n_i \ge 4$ for $i=1,\dots,r$;
    \item $n = n_0 n_1 \cdots n_r b$, and $n_0,n_1,\dots,n_r$ are pairwise coprime;
    \item $\Gamma \cong (\Gamma_0 \times \K_{n_1} \times \cdots \times \K_{n_r})[\overline{\K}_b]$;
    \item $\operatorname{Aut}(\Gamma) \cong S_b \wr (\operatorname{Aut}(\Gamma_0) \times S_{n_1} \times \cdots \times S_{n_r})$.
\end{enumerate}
Moreover, $\Gamma$ is uniquely determined by the triple $(\Gamma_0, \{n_1,\dots,n_r\}, b)$ satisfying the above conditions.
\end{theo}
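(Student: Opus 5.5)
The statement is Theorem~\ref{arccirculant-explicit-1}, quoted from \cite{LXZ-at-circ-2021}. Within this paper it is a cited result and is not reproved. Still, here is how I would reconstruct the argument, for the reader's orientation.

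Let $\Gamma=\Cay(T,S)$ be connected and arc-transitive with $T=\mathbb Z_n$, and put $A=\Aut(\Gamma)$. The first step removes the lexicographic factor. Define $u\equiv v$ when $\Gamma^+(u)=\Gamma^+(v)$ and $\Gamma^-(u)=\Gamma^-(v)$. This is an $A$-invariant partition whose blocks are cosets of a subgroup of $T$ of some order $b$. Then $\Gamma\cong\Sigma[\overline{\K}_b]$, where $\Sigma$ is a connected arc-transitive circulant of order $n/b$ in which no two vertices have the same in- and out-neighbourhoods. The full group $S_b\wr\Aut(\Sigma)$ acts, and one checks that this is all of $A$. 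This reduces the problem to the case $b=1$.

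In the reduced case I would use induction on $n$ through normal subgroups of $A$. If $R(T)\trianglelefteq A$, then $\Gamma$ is normal and we take $\Gamma_0=\Gamma$ with $r=0$. Otherwise I would take a minimal normal subgroup $N$ of $A$, or more usefully a maximal $A$-invariant partition, and use Lemma~\ref{primitive-cyclic-1}. The point is that any primitive quotient action of $A$ contains a cyclic regular subgroup, so it is either affine of prime degree or almost simple $2$-transitive.

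The key step is to analyse the kernel of $A$ on the blocks together with the induced action on the quotient. When some block system has an almost simple $2$-transitive quotient $S_{n_i}$ that is not visible in the normal part, arc-transitivity combined with the absence of twin vertices should force $\Gamma$ to split as a direct product $\Gamma'\times\K_{n_i}$, with $\gcd(n_i,|V\Gamma'|)=1$. The coprimality comes from $T$ being cyclic, and $n_i\ge4$ is needed so that $\K_{n_i}$ genuinely contributes a non-normal factor. The cases $\PGL(d,q)$, $\PSL(2,11)$ and the Mathieu groups have to be excluded here. I expect to do this by showing that the block stabiliser induced on a block is too small for the kernel to act compatibly with a cyclic regular subgroup, or that the $2$-transitive action extends to $S_{n_i}$ on the quotient complete graph. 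Iterating this splitting peels off $\K_{n_1},\dots,\K_{n_r}$ and leaves a normal circulant $\Gamma_0$. The condition $\Gamma_0\not\cong C_4$ is needed because $C_4=\K_{2,2}=\K_2[\overline{\K}_2]$ would otherwise give a non-unique decomposition.

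Finally I would prove the automorphism-group formula and uniqueness. The group $\Aut(\Gamma_0)\times\prod S_{n_i}$ acts on the direct product, and maximality follows from the same block analysis. Uniqueness follows because $b$, the $n_i$ and $\Gamma_0$ can all be read off canonically: $b$ from the twin classes, and the $n_i$ from the non-normal $2$-transitive quotients. The main obstacle is the splitting step, which requires showing that a non-normal quasiprimitive quotient forces a direct-product factor with the full symmetric group.
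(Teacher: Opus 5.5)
You are right that the paper gives no proof of this statement. It is quoted from \cite{LXZ-at-circ-2021} and only applied, in Lemma~\ref{prop:Gamma0-2DT} and Proposition~\ref{prop:no-Kn1-factor}, so the citation is the paper's whole argument. Your first reduction is sound. Two vertices of $\Cay(T,S)$ are twins exactly when they differ by an element of $K=\{k\in T: S+k=S\}$, and $S$ is a union of $K$-cosets. This gives $\Gamma\cong\Cay(T/K,S/K)[\overline{\K}_{|K|}]$ with a twin-free quotient, whose automorphism group is the full wreath product. Your explanations of why $\Gamma_0\not\cong C_4$, $n_i\ge 4$ and coprimality appear are also correct.

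As a proof, however, the reconstruction stops exactly where the theorem begins. Suppose $A$ induces on a maximal block system $\mathcal B$ a primitive group with a cyclic regular subgroup. In the $2$-transitive case this tells you at most that $\Gamma_{\mathcal B}$ is complete. It says nothing about how the arcs between two blocks are arranged, or how the kernel of $A$ on $\mathcal B$ acts. Those are precisely what you need to obtain $\Gamma\cong\Gamma'\times\K_m$ with $\gcd(m,|V\Gamma'|)=1$. Your ``should force'' and ``I expect'' mark this step, and it is the entire content of the result.

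In the literature this step rests on an ingredient your sketch lacks. Kov\'acs \cite{Kovacs-2004} gets the dichotomy from Leung and Man's classification of Schur rings over cyclic groups. The dichotomy is: complete, normal, $\Sigma[\overline{\K}_b]$, or $\Sigma[\overline{\K}_b]-b\Sigma\cong\Sigma\times\K_b$ with $\gcd(b,|V\Sigma|)=1$. Li \cite{LCH-circulant-2005} gets it from a structure theorem for permutation groups containing a cyclic regular subgroup. \cite{LXZ-at-circ-2021} then iterates this dichotomy and computes the full automorphism group.

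Once a product decomposition is in hand, the disappearance from (5) of $\PGL(d,q)$, $\PSL(2,11)$, $M_{11}$, $M_{23}$, and also $A_m$ (which you omitted), is naturally a $2$-closure argument:
\begin{itemize}
\item $\Aut(\Gamma)$ is $2$-closed;
\item the $2$-closure of a $2$-transitive group of degree $m$ is $S_m$;
\item the $2$-closure of a direct product in product action is the product of the $2$-closures.
\end{itemize}
Your ``block stabiliser too small'' idea is not substantiated as stated. Likewise, maximality in (5) and uniqueness of the triple are asserted rather than proved.

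So your text is a reasonable orientation, not a proof. For this paper that is acceptable only because the paper itself relies on the citation.
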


Let \(\Gamma\) be a connected \(2\)-distance-transitive circulant digraph. Let
\(
\Gamma\cong (\Gamma_0\times \K_{n_1}\times\cdots\times \K_{n_r})[\overline{\K}_b]
\)
be the decomposition given by Theorem \ref{arccirculant-explicit-1}. Then the following lemma shows that \(\Gamma_0\) is \(2\)-distance-transitive.

\begin{lemma}\label{prop:Gamma0-2DT}
Let \(\Gamma\) be a connected \(2\)-distance-transitive circulant digraph. Suppose that
\(
\Gamma\cong (\Gamma_0\times \K_{n_1}\times\cdots\times \K_{n_r})[\overline{\K}_b]
\)
is  the decomposition given by Theorem \ref{arccirculant-explicit-1}. Then \(\Gamma_0\) is \(2\)-distance-transitive.
\end{lemma}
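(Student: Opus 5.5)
We transfer 2-distance-transitivity from $\Gamma$ to $\Gamma_0$ through the explicit form of $\operatorname{Aut}(\Gamma)$ in Theorem~\ref{arccirculant-explicit-1}(5), namely $A:=\operatorname{Aut}(\Gamma)\cong S_b\wr(\operatorname{Aut}(\Gamma_0)\times S_{n_1}\times\cdots\times S_{n_r})$. Write $\Sigma=\Gamma_0\times \K_{n_1}\times\cdots\times \K_{n_r}$, so $\Gamma=\Sigma[\overline{\K}_b]$.

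\textbf{Step 1 (reduce to $\Sigma$).} The fibres $\{x\}\times V(\overline{\K}_b)$ form an $A$-invariant partition $\mathcal B$, and the quotient is $\Sigma$. Two vertices $(x,i)$ and $(y,j)$ with $x\neq y$ have $d_\Gamma((x,i),(y,j))=d_\Sigma(x,y)$, because arcs of $\Gamma$ between distinct fibres lift arcs of $\Sigma$ and every lift is present. Take $x,y$ with $d_\Sigma(x,y)=k\in\{1,2\}$ and lift them to a pair at distance $k$ in $\Gamma$. Since $A$ is transitive on such pairs in $\Gamma$, and $A$ induces $\operatorname{Aut}(\Gamma_0)\times\prod S_{n_i}$ on $\mathcal B$, this group is transitive on pairs at distance $k$ in $\Sigma$. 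A distance-$2$ pair in $\Gamma$ might lie inside one fibre, but we only need that the pairs with $x\neq y$ at distance $k$ form a nonempty $A$-invariant subset of a single orbit, which is therefore all of it. When $b=1$, Step 1 is trivial.

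\textbf{Step 2 (reduce from $\Sigma$ to $\Gamma_0$).} Set $H=\operatorname{Aut}(\Gamma_0)\times\prod S_{n_i}$, acting coordinatewise on $\Sigma$.
\begin{itemize}
\item Arcs: an arc $(x,y)$ of $\Sigma$ has $x_0\to y_0$ in $\Gamma_0$ and distinct $\K_{n_i}$-coordinates. Transitivity of $H$ on arcs of $\Sigma$ projects to transitivity of $\operatorname{Aut}(\Gamma_0)$ on arcs of $\Gamma_0$.
\item Distance $2$: take $u_0,w_0$ with $d_{\Gamma_0}(u_0,w_0)=2$. Choose the $\K_{n_i}$-coordinates equal, or more generally so that each coordinate admits a 2-walk; this is possible since $n_i\geq4\geq3$. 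Then $d_\Sigma(u,w)=2$, because any arc $u\to w$ would force $u_0\to w_0$. Conversely, every pair at distance $2$ in $\Sigma$ has $\Gamma_0$-projection with $d_{\Gamma_0}\in\{0,1,2\}$.
\end{itemize}

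The main obstacle is Step 2 at distance $2$, since the distance-$2$ pairs of $\Sigma$ can project to pairs at distance $0$, $1$ or $2$ in $\Gamma_0$, and these projection types must not mix. Because $H$ preserves $\Gamma_0$-coordinates, projected $\Gamma_0$-distance is an $H$-invariant. So a single $H$-orbit on distance-$2$ pairs of $\Sigma$ has constant projected distance. It therefore contains all pairs projecting to distance $2$, which exist by the construction above, provided $\Gamma_0$ has diameter at least $2$. Projecting gives that $\operatorname{Aut}(\Gamma_0)$ is transitive on distance-$2$ pairs of $\Gamma_0$.

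If $\Gamma_0$ has diameter $1$, i.e.\ it is complete or a tournament, 2-distance-transitivity of $\Gamma_0$ reduces to arc-transitivity, which we already have. The same invariant argument also settles the degenerate cases of Step 1. Combining Steps 1 and 2, $\Gamma_0$ is $2$-distance-transitive.
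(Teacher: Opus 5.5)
Your proposal is correct and follows essentially the same route as the paper. Both arguments pass from $\Gamma$ to $\Sigma$ using the wreath-product form of $\operatorname{Aut}(\Gamma)$ and the equality of distances between distinct fibres, then pass from $\Sigma$ to $\Gamma_0$ by lifting distance-$2$ pairs with equal $\K_{n_i}$-coordinates and reading off the $\operatorname{Aut}(\Gamma_0)$-component of a coordinatewise automorphism. The only differences are minor: you derive arc-transitivity of $\Gamma_0$ by projection rather than quoting Theorem~\ref{arccirculant-explicit-1}, and your invariance-of-projected-distance remark is harmless but unnecessary, since lifting plus the coordinatewise action already suffices.
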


\begin{proof}
Set
\(
H:=\Gamma_0\times \K_{n_1}\times\cdots\times \K_{n_r}.
\)
Then \(\Gamma\cong H[\overline{\K}_b]\). By Theorem \ref{arccirculant-explicit-1},
\(
\operatorname{Aut}(H)\cong \operatorname{Aut}(\Gamma_0)\times S_{n_1}\times\cdots\times S_{n_r},
\)
and
\(
\operatorname{Aut}(\Gamma)\cong S_b\wr \operatorname{Aut}(H).
\)

First we show that \(H\) is \(2\)-distance-transitive. Let
\(h_1,h_2,k_1,k_2\in V(H)\) be such that
\(
d_H(h_1,h_2)=d_H(k_1,k_2)=2.
\)
Choose arbitrary \(i,j,i',j'\in\{1,\dots,b\}\). Since \(\overline{\K}_b\) has no arcs, for \(h\neq h'\),
\[
d_\Gamma((h,i),(h',j))=d_H(h,h').
\]
Hence the pairs
\[
((h_1,i),(h_2,j))
\quad\text{and}\quad
((k_1,i'),(k_2,j'))
\]
are ordered pairs of vertices of \(\Gamma\) at distance \(2\). Since \(\Gamma\) is \(2\)-distance-transitive, there exists
\(
\Phi\in\operatorname{Aut}(\Gamma)
\)
such that
\[
\Phi(h_1,i)=(k_1,i'),\qquad
\Phi(h_2,j)=(k_2,j').
\]
Because \(\operatorname{Aut}(\Gamma)\cong S_b\wr \operatorname{Aut}(H)\), the automorphism \(\Phi\) induces an automorphism \(\phi\in\operatorname{Aut}(H)\) satisfying
\[
\phi(h_1)=k_1,\qquad
\phi(h_2)=k_2.
\]
Thus \(H\) is \(2\)-distance-transitive.

Now let \(u,u',v,v'\in V(\Gamma_0)\) satisfy
\[
d_{\Gamma_0}(u,u')=d_{\Gamma_0}(v,v')=2.
\]
Fix vertices \(0_i\in V(\K_{n_i})\) for \(i=1,\dots,r\), and put
\[
h_1=(u,0_1,\dots,0_r),\qquad
h_2=(u',0_1,\dots,0_r),
\]
\[
k_1=(v,0_1,\dots,0_r),\qquad
k_2=(v',0_1,\dots,0_r).
\]
The projection \(H\to \Gamma_0\) is a graph homomorphism, so
\[
d_H(h_1,h_2)\ge d_{\Gamma_0}(u,u')=2.
\]
Conversely, a directed path of length \(2\) from \(u\) to \(u'\) in \(\Gamma_0\) can be lifted to a directed path of length \(2\) from \(h_1\) to \(h_2\) in \(H\) by choosing suitable vertices in the complete factors. Hence
\(
d_H(h_1,h_2)=2.
\)
Similarly,
\(
d_H(k_1,k_2)=2.
\)

Since \(H\) is \(2\)-distance-transitive, there exists \(\phi\in\operatorname{Aut}(H)\) such that
\[
\phi(h_1)=k_1,\qquad
\phi(h_2)=k_2.
\]
Using
\[
\operatorname{Aut}(H)\cong \operatorname{Aut}(\Gamma_0)\times S_{n_1}\times\cdots\times S_{n_r},
\]
we may write
\[
\phi=(\alpha,\sigma_1,\dots,\sigma_r)
\]
with \(\alpha\in\operatorname{Aut}(\Gamma_0)\) and \(\sigma_i\in S_{n_i}\). From \(\phi(h_1)=k_1\) we obtain
\[
\alpha(u)=v,
\]
and from \(\phi(h_2)=k_2\) we obtain
\[
\alpha(u')=v'.
\]
Thus
\[
\alpha(u)=v,\qquad
\alpha(u')=v'.
\]
Therefore \(\operatorname{Aut}(\Gamma_0)\) acts transitively on ordered pairs of vertices at distance \(2\).

Finally, by Theorem \ref{arccirculant-explicit-1}, \(\Gamma_0\) is arc-transitive, so \(\operatorname{Aut}(\Gamma_0)\) acts transitively on ordered pairs at distance \(1\). It is trivially transitive on ordered pairs at distance \(0\). Hence \(\Gamma_0\) is \(2\)-distance-transitive.
\end{proof}


The following graph is defined in Definition 2.6 of \cite{Praeger-1989}.
Let $v\geq 2,r\geq 3$ and $s\geq 1$ be integers. Define $C_r(v,s)$ to be the digraph with vertex set
$\mathbb{Z}_r\times \mathbb{Z}_v^s$, and $(i,x)\rightarrow (j,y)$ for $x=(x_1,x_2,\ldots,x_s)$
and $y=(y_1,y_2,\ldots,y_s)\in \mathbb{Z}_v^s$, if and only if $j=i+1$ and $y=(y_1,x_1,x_2,\ldots,x_{s-1})$.
By \cite[Theorem 2.8]{Praeger-1989},    the digraphs $C_r(v,s)$, for $r\geq 3,v\geq 2,s\geq 1$, have valency $v$, diameter $r$ and are connected.
For all $s\geq 1$, the digraph $C_r(v,s)$ has automorphism group $G=S_v\wr \mathbb{Z}_r$, and for $1\leq s<r$, the digraph $C_r(v,r-s)$ is $(G,s)$-arc transitive but not $(G,s+1)$-arc transitive.
In particular, $C_r(v,1)\cong \overrightarrow{C_r}[\overline{\K_v}]$.

The following lemma shows that $C_r(v,1)$ is isomorphic to a circulant digraph.
\begin{lemma}\label{crv-circdig-1}
Let \(r\ge 3\) and \(v\ge 2\). Then \(C_r(v,1)\) is a Cayley digraph on the cyclic group \(\mathbb Z_{rv}\). More precisely,
\(
C_r(v,1)\cong \operatorname{Cay}\bigl(\mathbb Z_{rv},S\bigr),
\)
where
\(
S=\{1+kr \pmod{rv}: k=0,1,\dots,v-1\}.
\)
\end{lemma}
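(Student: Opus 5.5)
We exhibit an explicit isomorphism. Since $C_r(v,1)\cong \overrightarrow{C_r}[\overline{\K_v}]$, we may describe $C_r(v,1)$ as the digraph with vertex set $\mathbb Z_r\times\mathbb Z_v$ in which $(i,x)\rightarrow(j,y)$ if and only if $j=i+1$ in $\mathbb Z_r$, with no condition on $x,y$. (The defining rule $y=(y_1)$ imposes no restriction when $s=1$.) The task is therefore to find a bijection $\varphi:\mathbb Z_{rv}\to\mathbb Z_r\times\mathbb Z_v$ such that $a\rightarrow b$ in $\operatorname{Cay}(\mathbb Z_{rv},S)$ exactly when the first coordinate of $\varphi(b)$ is one more than that of $\varphi(a)$.

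For $a\in\mathbb Z_{rv}$, reduce $a$ modulo $r$; this is well defined because $r\mid rv$. Write $a=\bar a+qr$ with $\bar a\in\{0,\dots,r-1\}$ and $q\in\{0,\dots,v-1\}$, and set $\varphi(a)=(a \bmod r,\ q)$. This is a bijection, being the standard mixed-radix representation.

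In $\operatorname{Cay}(\mathbb Z_{rv},S)$ we have $a\rightarrow b$ if and only if $b-a\in S=\{1+kr\}$, that is, if and only if $b-a\equiv 1\pmod r$. Indeed, the elements of $\mathbb Z_{rv}$ congruent to $1$ modulo $r$ are exactly the $v$ residues $1+kr$ with $0\le k\le v-1$. So $a\rightarrow b$ holds exactly when $b\bmod r=(a\bmod r)+1$, with no condition on the second coordinates. This is precisely the arc relation of $\overrightarrow{C_r}[\overline{\K_v}]$ transported by $\varphi$, so $\varphi$ is an isomorphism.

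It remains to check that the Cayley digraph is well defined. We have $0\notin S$ because $1+kr\equiv 1\not\equiv 0\pmod r$, using $r\ge 3$. Also $|S|=v$, matching the valency of $C_r(v,1)$. As a side remark, connectivity is consistent: $1\in S$ generates $\mathbb Z_{rv}$.

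No step is a real obstacle. The only point needing care is that the arc condition of $C_r(v,1)$ for $s=1$ reduces to $j=i+1$ alone, so that $C_r(v,1)$ is the lexicographic product, as quoted above.
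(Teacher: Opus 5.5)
Your proof is correct, and it takes a different route from the paper. The paper works inside $\operatorname{Aut}(C_r(v,1))$. It defines $g(i,x)=(i+1,\sigma_i(x))$, where the $v$-cycle $\sigma$ acts only on layer $0$. It checks that $g$ is an automorphism of order $rv$ whose nontrivial powers fix no vertex, so $\langle g\rangle$ is a cyclic regular subgroup. It then reads off the connection set as the exponents $m$ with $g^m(0,0)\in\Gamma^+((0,0))$, using $g^{1+kr}(0,0)=(1,\sigma^{k+1}(0))$.

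You instead write down the isomorphism directly. Your key observation is that $S=1+r\mathbb Z_{rv}$ is a full coset of the subgroup $r\mathbb Z_{rv}$. Hence adjacency in $\operatorname{Cay}(\mathbb Z_{rv},S)$ depends only on residues modulo $r$, just as adjacency in $C_r(v,1)\cong\overrightarrow{C_r}[\overline{\K_v}]$ depends only on layers. This is shorter and avoids all order and semiregularity computations. It is really the general fact that $\operatorname{Cay}(T,s+N)\cong\operatorname{Cay}(T/N,\{s+N\})[\overline{\K}_{|N|}]$ for a subgroup $N$ of an abelian group $T$ and $s\notin N$.

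What the paper's route buys is an explicit description of the regular cyclic subgroup in wreath-product coordinates, $g=(\sigma,1,\dots,1;1)\in S_v\wr\mathbb Z_r$. Lemma~\ref{crv-not-circdig-1} then uses exactly this element to prove non-normality. You can recover it from your isomorphism. Translation $a\mapsto a+1$, transported by $\varphi$, is $(i,x)\mapsto(i+1,x)$ for $i\ne r-1$ and $(r-1,x)\mapsto(0,x+1)$. This is the paper's $g$ with the $v$-cycle on layer $r-1$ instead of layer $0$, a conjugate of $g$ by the layer shift. As a minor remark, $0\notin S$ needs only $r\ge 2$.
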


\begin{proof}
Write \(\Gamma=C_r(v,1)\). Its vertex set is
\(
V(\Gamma)=\mathbb Z_r\times \mathbb Z_v,
\)
and its arcs are
\(
(i,x)\longrightarrow (i+1,y)
\)
for all \(i\in\mathbb Z_r\) and all \(x,y\in\mathbb Z_v\).

Choose a \(v\)-cycle \(\sigma\in S_v\), for example \(\sigma(x)=x+1\pmod v\). Define a permutation \(g\) of \(V(\Gamma)\) by
\[
g(i,x)=
\begin{cases}
(i+1,\sigma(x)), & i=0,\\
(i+1,x), & i\neq 0.
\end{cases}
\]
Equivalently, if we set \(\sigma_0=\sigma\) and \(\sigma_i=\operatorname{id}\) for \(i\neq 0\), then
\[
g(i,x)=(i+1,\sigma_i(x)).
\]

First we show that \(g\in\operatorname{Aut}(\Gamma)\). Let \((i,x)\to(i+1,y)\) be an arc of \(\Gamma\). Then
\[
g(i,x)=(i+1,\sigma_i(x)),\qquad
g(i+1,y)=(i+2,\sigma_{i+1}(y)).
\]
Since every vertex in layer \(i+1\) is adjacent to every vertex in layer \(i+2\), the pair
\[
(i+1,\sigma_i(x))\longrightarrow (i+2,\sigma_{i+1}(y))
\]
is again an arc of \(\Gamma\). Hence \(g\) preserves arcs.

Next we determine the order of \(g\). For any \((i,x)\in V(\Gamma)\),
\[
g^r(i,x)=(i,\sigma(x)).
\]
Indeed, during \(r\) consecutive applications of \(g\), the first coordinate runs through all residues modulo \(r\), and the second coordinate is permuted by \(\sigma\) exactly when the current first coordinate is \(0\). Thus exactly one application of \(\sigma\) occurs. Since \(\sigma\) is a \(v\)-cycle, \(g^r\) has order \(v\).

Now suppose \(g^k=\operatorname{id}\). Then the first coordinate forces \(k\equiv 0\pmod r\), say \(k=rm\). Then
\[
g^k=g^{rm}=(g^r)^m
\]
acts on the second coordinate by \(\sigma^m\). Hence \(\sigma^m=\operatorname{id}\), so \(v\mid m\). Therefore \(rv\mid k\), and \(g\) has order \(rv\).

Moreover, if \(g^k(i,x)=(i,x)\), then the same argument gives \(k\equiv 0\pmod r\), say \(k=rm\), and then \(\sigma^m(x)=x\). Since \(\sigma\) is a \(v\)-cycle, this forces \(v\mid m\), hence \(rv\mid k\). Thus no nontrivial power of \(g\) fixes a vertex. Therefore \(\langle g\rangle\) is a regular cyclic group of order \(rv\) acting on \(V(\Gamma)\).

Since \(\langle g\rangle\) is a cyclic group of order \(rv\) acting regularly and as a group of automorphisms of \(\Gamma\), the digraph \(\Gamma\) is a Cayley digraph on \(\mathbb Z_{rv}\).

To identify the connection set, take the base vertex \((0,0)\). Its out-neighbors in \(\Gamma\) are exactly
\[
(1,y),\qquad y\in\mathbb Z_v.
\]
For \(k=0,1,\dots,v-1\),
\[
g^{1+kr}(0,0)=(1,\sigma^{k+1}(0)).
\]
Since \(\sigma\) is a \(v\)-cycle, the elements \(\sigma^{k+1}(0)\) run through all of \(\mathbb Z_v\). Hence the out-neighbors of \((0,0)\) correspond precisely to the exponents
\(
1+kr \pmod{rv}, k=0,1,\dots,v-1.
\)
Therefore, under the identification
\[
\mathbb Z_{rv}\longrightarrow V(\Gamma),\qquad
m\longmapsto g^m(0,0),
\]
we obtain
\[
C_r(v,1)\cong \operatorname{Cay}\bigl(\mathbb Z_{rv},S\bigr),
\]
where
\(
S=\{1+kr \pmod{rv}: k=0,1,\dots,v-1\}.
\)
This proves the claim.
\end{proof}

By Lemma \ref{crv-circdig-1}, \(C_r(v,1)\) is a Cayley digraph on the cyclic group
\(\mathbb Z_{rv}\), and the following results shows that it is not normal.

\begin{lemma}\label{crv-not-circdig-1}
Let \(r\ge 3\) and \(v\ge 2\). Then \(C_r(v,1)\) is a Cayley digraph on the cyclic group
\(\mathbb Z_{rv}\), but it is not a normal Cayley digraph on \(\mathbb Z_{rv}\).
More precisely, the cyclic regular subgroup \(H\cong \mathbb Z_{rv}\) constructed
below is not normal in \(\operatorname{Aut}(C_r(v,1))\).
\end{lemma}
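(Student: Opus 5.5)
Take $H=\langle g\rangle$ from the proof of Lemma~\ref{crv-circdig-1}, where $g(i,x)=(i+1,\sigma_i(x))$ with $\sigma_0=\sigma$ a $v$-cycle and $\sigma_i=\operatorname{id}$ for $i\ne 0$. That lemma already shows $H$ is a cyclic regular subgroup of $\operatorname{Aut}(C_r(v,1))$ of order $rv$, which gives the first assertion. It remains to find an automorphism $\tau$ with $\tau H\tau^{-1}\neq H$.

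We use the explicit structure $C_r(v,1)\cong\overrightarrow{C_r}[\overline{\K_v}]$: the arcs are exactly $(i,x)\to(i+1,y)$ for all $x,y$. So any permutation acting independently on each layer $\{i\}\times\mathbb Z_v$, i.e. $\tau(i,x)=(i,\pi_i(x))$ with arbitrary $\pi_i\in S_v$, is an automorphism. Choose $\tau$ with $\pi_1$ a transposition, say swapping $0$ and $1$, and $\pi_i=\operatorname{id}$ for $i\neq1$.

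Compute $\tau g\tau^{-1}$. On layer $0$ it sends $(0,x)\mapsto(1,\pi_1\sigma(x))$. On layer $1$ it sends $(1,x)\mapsto(2,\pi_1^{-1}(x))$. On every other layer it agrees with $g$. Suppose this element lies in $H$. It maps layer $i$ to layer $i+1$, and the only elements of $H$ doing so are the $g^{1+kr}$. These act on layer $2$, and on every layer other than $0$, as $(i,x)\mapsto(i+1,\sigma^k(x))$, because going from layer $i$ to layer $i+1$ in $1+kr$ steps passes through layer $0$ exactly $k$ times. Since $r\ge3$, layer $2$ is neither layer $0$ nor layer $1$, so $\tau g\tau^{-1}$ acts there as $g$ does, forcing $\sigma^k=\operatorname{id}$ and hence $g^{1+kr}=g$. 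But on layer $1$, $\tau g\tau^{-1}$ acts by $\pi_1\neq\operatorname{id}$ while $g$ acts by the identity, a contradiction. Hence $H$ is not normal.

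The main obstacle is the "more precisely" clause: the paper's wording that $C_r(v,1)$ is "not a normal Cayley digraph" could mean that no cyclic regular subgroup is normal. If that reading is needed, argue as follows. By Lemma~\ref{cayley-normal}, normality would give $|\operatorname{Aut}(\Gamma)|=rv\,|\operatorname{Aut}(\mathbb Z_{rv},S)|\le rv\,\varphi(rv)$. On the other hand, the layer-wise maps alone give $|\operatorname{Aut}(\Gamma)|\ge (v!)^r$. Then check $(v!)^r> rv\,\varphi(rv)$ for $r\ge3$, $v\ge2$, which is elementary: $(v!)^r\ge 2^{r}v^{\,r-1}$ eventually, and small cases are checked directly, e.g. $r=3,v=2$ gives $8$ vs $6\cdot2=12$. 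That small case fails the bound, so there one instead counts $|\operatorname{Aut}(\Gamma)|=|S_v\wr\mathbb Z_r|=(v!)^r r$, which gives $24>12$.
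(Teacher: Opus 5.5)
Your argument is correct, but it takes a different route from the paper's. The paper uses $\operatorname{Aut}(\Gamma)=S_v\wr\mathbb Z_r$ (from Praeger) and splits into two cases. For $v\ge 3$ it conjugates $g^r=(\sigma,\ldots,\sigma;0)$ by an element inverting $\sigma$ on one layer, which shows that $H\cap B$, and hence $H$, is not normal. For $v=2$ it conjugates $g$ by $\sigma$ placed on layer $0$ and compares coordinates with $g^{1+mr}$.

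You instead conjugate $g$ itself by a transposition on layer $1$, and use that every $g^{1+kr}$ acts as $\sigma^k$ on all layers other than $0$. Comparing on layer $2$ and then layer $1$ settles all $v\ge 2$ at once; this is essentially the paper's $v=2$ computation made uniform. It also needs only that layer-wise permutations are automorphisms, not the full determination of $\operatorname{Aut}(\Gamma)$.

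Your fallback counting argument proves more than the paper does: no cyclic regular subgroup of $\operatorname{Aut}(C_r(v,1))$ is normal. That is the form actually used in Propositions~\ref{lem:even-3} and~\ref{p-power-normal-2dt}. There, an isomorphism $\operatorname{Cay}(T,S)\cong C_r(m,1)$ carries $R(T)$ to some cyclic regular subgroup that need not be $H$. The paper's version would additionally need that all such subgroups are conjugate in $S_v\wr\mathbb Z_r$.

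One repair is needed in that fallback. The bound $(v!)^r\ge 2^rv^{r-1}$ is false for $v=2$. Moreover, with only the lower bound $(v!)^r$, the comparison fails not just at $(r,v)=(3,2)$ but also at $(4,2)$ and $(5,2)$. Use $|\operatorname{Aut}(\Gamma)|\ge r(v!)^r$ throughout, since the rotation $(i,x)\mapsto(i+1,x)$ is also an automorphism. You then need $(v!)^r>v\varphi(rv)$. For $v=2$ this holds because $\varphi(2r)\le r<2^{r-1}$. For $v\ge 3$ it holds because $(v!)^r\ge(2v)^r>rv^2>v\varphi(rv)$.
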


\begin{proof}
Let \(\Gamma=C_r(v,1)\). By \cite[Theorem 2.8]{Praeger-1989},
\(
\operatorname{Aut}(\Gamma)=W=S_v\wr \mathbb Z_r.
\)
Write
\[
W=B\rtimes T,\qquad B=(S_v)^r,\qquad T=\langle t\rangle\cong \mathbb Z_r.
\]
An element of \(W\) is written as
\[
(\sigma_0,\sigma_1,\ldots,\sigma_{r-1};t),
\]
and it acts on \(V(\Gamma)=\mathbb Z_r\times \mathbb Z_v\) by
\[
(i,x)\longmapsto (i+t,\sigma_i(x)).
\]

Choose a \(v\)-cycle \(\sigma\in S_v\), and define
\[
g=(\sigma,1,\ldots,1;1)\in W.
\]
Then \(g\) has order \(rv\), and \(H=\langle g\rangle\) is a regular cyclic subgroup
of \(W\) of order \(rv\). Hence \(\Gamma\) is a Cayley digraph on
\(\mathbb Z_{rv}\) via the identification
\[
\mathbb Z_{rv}\longrightarrow V(\Gamma),\qquad
m\longmapsto g^m(0,0).
\]
We now show that \(H\) is not normal in \(W=\operatorname{Aut}(\Gamma)\).

First compute \(g^r\). Since \(g\) shifts the \(\mathbb Z_r\)-coordinate by \(1\)
and applies \(\sigma\) exactly once to each layer during \(r\) steps, we have
\[
g^r=(\sigma,\sigma,\ldots,\sigma;0)\in B.
\]
Thus
\[
H\cap B=\langle g^r\rangle
=\langle(\sigma,\sigma,\ldots,\sigma;0)\rangle.
\]

\medskip
\noindent
\textbf{Case 1: \(v\ge 3\).}

Since \(\sigma\) is a \(v\)-cycle and \(v\ge 3\), there exists \(b_0\in S_v\)
such that
\[
b_0\sigma b_0^{-1}=\sigma^{-1}.
\]
Let
\[
b=(b_0,1,\ldots,1;0)\in B.
\]
Then
\[
b(\sigma,\sigma,\ldots,\sigma;0)b^{-1}
=(\sigma^{-1},\sigma,\ldots,\sigma;0).
\]
This element is not in \(H\cap B\). Indeed, if it were in
\(\langle(\sigma,\sigma,\ldots,\sigma;0)\rangle\), then for some \(m\) we would have
\[
\sigma^{-1}=\sigma^m
\quad\text{and}\quad
\sigma=\sigma^m.
\]
The second equality gives \(m\equiv 1\pmod v\), and then the first gives
\(\sigma^{-1}=\sigma\), so \(\sigma^2=1\), contradicting that \(\sigma\) is a
\(v\)-cycle with \(v\ge 3\).

Thus \(H\cap B\) is not normal in \(W\). Since \(B\) is normal in \(W\), if
\(H\) were normal in \(W\), then \(H\cap B\) would also be normal in \(W\).
Therefore \(H\) is not normal in \(W\).

\medskip
\noindent
\textbf{Case 2: \(v=2\).}

Let \(\sigma\) be the nontrivial element of \(S_2\). Then
\(
g=(\sigma,1,\ldots,1;1).
\)
Let
\(
b=(\sigma,1,\ldots,1;0)\in B.
\)
Since \(b^2=1\) and \(g=t b\), we have
\[
b g b^{-1}=b(t b)b^{-1}=b t.
\]
A direct computation gives
\[
b g b^{-1}=(1,\ldots,1,\sigma;1),
\]
where the entry \(\sigma\) occurs in the last coordinate \(r-1\).

Suppose, for contradiction, that \(b g b^{-1}\in H=\langle g\rangle\). Then
\[
b g b^{-1}=g^k
\]
for some \(k\). Comparing the \(T\)-components gives \(k\equiv 1\pmod r\).
Write \(k=1+mr\). Since
\[
g^r=(\sigma,\ldots,\sigma;0)
\]
is central in \(W\) when \(v=2\), we obtain
\[
g^{1+mr}
=g(g^r)^m
=(\sigma^{m+1},\sigma^m,\ldots,\sigma^m;1).
\]
Thus we would have
\[
(1,\ldots,1,\sigma;1)
=
(\sigma^{m+1},\sigma^m,\ldots,\sigma^m;1).
\]
Now compare the entry in coordinate \(1\). Since \(r\ge 3\), the coordinate
\(1\) is distinct from \(0\) and from \(r-1\). Hence the left-hand side has
entry \(1\) in coordinate \(1\), while the right-hand side has entry
\(\sigma^m\) there. Therefore \(\sigma^m=1\), so \(m\) is even.

Next compare the entry in coordinate \(0\). The left-hand side has entry \(1\)
in coordinate \(0\), while the right-hand side has entry \(\sigma^{m+1}\).
Therefore \(\sigma^{m+1}=1\), so \(m+1\) is even, i.e. \(m\) is odd. This is a
contradiction.

Hence \(b g b^{-1}\notin H\). Therefore \(H\) is not normal in
\(W=\operatorname{Aut}(\Gamma)\).

\medskip
In both cases, the cyclic regular subgroup \(H\cong \mathbb Z_{rv}\) is not
normal in \(\operatorname{Aut}(C_r(v,1))\). Consequently, \(C_r(v,1)\) is not
a normal Cayley digraph on \(\mathbb Z_{rv}\).
\end{proof}


Let $p$ be an odd prime and   $m\geq 1$ an integer. Let $\mathbb{Z}_{p^m}$ be a cyclic group under addition.
Note that $\mathrm{Aut}(\mathbb{Z}_{p^m})\cong \mathbb{Z}_{p^{m-1}}\times \mathbb{Z}_{p-1}$. Let  $r$ be a positive divisor of $p-1$.  We use $H_r$ to denote the unique subgroup of $\mathrm{Aut}(\mathbb{Z}_{p^m})$ of order $r$, which is isomorphic to $\mathbb{Z}_r$.
We define a digraph $G(p^m,r)$ by
\begin{align*}
V(G(p^m,r))&=\mathbb{Z}_{p^m},\\
E(G(p^m,r))&=\{(x,y)\mid y-x\in H_r\}.
\end{align*}

The digraph $G(p^m,r)$ is isomorphic to the following Cayley digraph
\[
G(p^m,r) = \operatorname{Cay}(\mathbb{Z}_{p^m}, H_r).
\]

\begin{theo}{\rm(\cite[Theorem 4.2]{XBS-2004})}\label{at-primepower-normalcirc}
\begin{enumerate}[{\rm (1)}]
\item $G(p^m,r)$ is an arc-transitive digraph of order $p^m$ with out-valency and in-valency $r$. $G(p^m,r)$ is undirected if and only if $r$ is even.
\item Every normal arc-transitive circulant digraph of order $p^m$ is isomorphic to $G(p^m,r)$ for some divisor $r$ of $p-1$, and with $(p^m,r)\neq (p,p-1)$. (Note that $G(p,p-1)$ is complete, so it is not normal.)
\item $A=\mathrm{Aut}(G(p^m,r))\cong \mathbb{Z}_{p^m}\rtimes H_r$ acts regularly on the set of arcs for either $m>1$, or $m=1$ and $r<p-1$. So normal arc-transitive circulant (di)graphs are $r$-regular.
\end{enumerate}
\end{theo}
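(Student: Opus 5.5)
The plan is to prove the three parts in order, using the identification $\operatorname{Aut}(\mathbb Z_{p^m})=\mathbb Z_{p^m}^{\times}$ throughout. Under this identification $H_r$ is the unique subgroup of order $r$ of the units, and it acts on $\mathbb Z_{p^m}$ by multiplication. The connection set $S=H_r$ is then exactly the $H_r$-orbit of $1$.

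\emph{Part (1).} Since $1\in S$, the digraph is connected. The group $H_r$ lies in $\operatorname{Aut}(T,S)$ and is transitive on $S$, so $T\rtimes H_r$ is transitive on arcs. The digraph is undirected if and only if $S=-S$, which here means $-1\in H_r$. Because $\mathbb Z_{p^m}^{\times}$ is cyclic (as $p$ is odd), $-1$ is its unique involution. Hence $-1\in H_r$ exactly when $r$ is even.

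\emph{Part (2).} Let $\Gamma=\operatorname{Cay}(\mathbb Z_{p^m},S)$ be normal and arc-transitive. By Lemma~\ref{cayley-normal}, the stabilizer $A_0$ equals $K:=\operatorname{Aut}(T,S)\le \mathbb Z_{p^m}^{\times}$. Arc-transitivity then forces $S=sK$ for some $s$.
\begin{itemize}
\item All elements of an orbit of units have the same $p$-adic valuation. Since $\langle S\rangle=T$, the element $s$ must therefore be a unit.
\item Applying the automorphism $x\mapsto s^{-1}x$ gives $\Gamma\cong\operatorname{Cay}(T,K)$.
\item It remains to show that $p\nmid |K|$, so that $K=H_r$ with $r\mid p-1$.
\item Suppose instead that $p\mid |K|$. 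Then $m\ge 2$ and $K\supseteq 1+p^{m-1}\mathbb Z$, so $S$ is a union of cosets of $P=\langle p^{m-1}\rangle$ not containing $P$.
\item Consequently $\Gamma\cong\Sigma[\overline{\K}_p]$, and $\operatorname{Aut}(\Gamma)$ contains $S_p^{\,p^{m-1}}$. This has order far exceeding $|T\rtimes K|$, contradicting normality.
\end{itemize}
The exclusion $(p,p-1)$ is only a convention, since $G(p,p-1)=\K_p$.

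\emph{Part (3), the main step.} Set $A=\operatorname{Aut}(G(p^m,r))$. The key claim is that $p\nmid |A_0|$. Let $g\in A_0$ have prime order $q$. Since $\Gamma$ is a connected vertex-transitive digraph, it is strongly connected. So there is a vertex $u$ fixed by $g$ that has an out-neighbour moved by $g$. The $\langle g\rangle$-orbit of that out-neighbour lies in $\Gamma^+(u)$ and has size $q$, so $q\le r<p$.

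Now apply Theorem~\ref{arccirculant-explicit-1} to write $\Gamma\cong(\Gamma_0\times\K_{n_1}\times\cdots\times\K_{n_r})[\overline{\K}_b]$.
\begin{itemize}
\item If $b>1$, then $b$ is a power of $p$, and $S_b\wr(\cdots)$ has vertex stabilizers of order divisible by $p$. This is impossible, so $b=1$.
\item The $n_i$ are coprime to $n_0$ and have product $p^m$. So either there are no factors $\K_{n_i}$, or $n_0=1$ and $\Gamma\cong\K_{p^m}$.
\item The complete case requires $m=1$ and $r=p-1$, which is excluded.
\end{itemize}
Hence $\Gamma=\Gamma_0$ is normal. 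Then $A=T\rtimes\operatorname{Aut}(T,H_r)$, and $\operatorname{Aut}(T,H_r)=H_r$ because a unit $a$ with $aH_r=H_r$ satisfies $a=a\cdot1\in H_r$. Since $|A|=p^m r$ equals the number of arcs and $A$ is arc-transitive, $A$ is regular on arcs.

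For $m=1$ one can alternatively invoke Lemma~\ref{primitive-cyclic-1}. There $A$ is primitive of prime degree, and it is either $2$-transitive (forcing completeness) or contained in $\operatorname{AGL}(1,p)$.

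The obstacle I expect is making the bound $p\nmid|A_0|$ and its interplay with the decomposition of Theorem~\ref{arccirculant-explicit-1} airtight. In particular I need to check that the stabilizer in $S_b\wr(\operatorname{Aut}(\Gamma_0)\times\cdots)$ genuinely has order divisible by $p$ whenever $b>1$.
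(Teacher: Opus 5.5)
The paper does not prove this statement; it imports it from \cite[Theorem 4.2]{XBS-2004}. Your argument is therefore an independent reconstruction from tools the paper already quotes, and it is correct in substance.

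\textbf{Comparison.} Parts (1) and (2) are elementary and fine. In (3), your key idea is the right one: every prime divisor of $|A_0|$ is at most the valency $r\le p-1$, via a fixed vertex with a moved out-neighbour. This is exactly what makes the decomposition of Theorem~\ref{arccirculant-explicit-1} collapse. It rules out $b>1$ as follows. Since $\Gamma$ is connected, there are at least two blocks. The stabilizer of a vertex therefore contains the full symmetric group on a different block, and its order $b!$ is divisible by $p$. This settles the worry you raise at the end. What your route buys is self-containment relative to the paper's own toolkit. The price is invoking the much heavier, later Li--Xia--Zhou decomposition for a result the paper simply cites.

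\textbf{Points to tighten.}

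First, Theorem~\ref{arccirculant-explicit-1} only says that $\Gamma_0$ is normal with respect to \emph{some} cyclic regular subgroup $C$ of $A$. To write $A=T\rtimes\operatorname{Aut}(T,H_r)$ you need $C=T$. This follows from what you have already proved. Since $|A|=p^m|A_0|$ with $p\nmid |A_0|$, $T$ is a Sylow $p$-subgroup of $A$. Being a normal subgroup of order $p^m$, $C$ is then the unique Sylow $p$-subgroup, so $C=T$.

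Second, the exclusion $(p^m,r)\neq(p,p-1)$ in (2) is not just a convention.
\begin{itemize}
\item For $p\ge 5$ it is a fact you should state and justify: $\operatorname{Aut}(\K_p)=S_p$ has no normal subgroup of order $p$.
\item For $p=3$ the parenthetical claim is actually false. $G(3,2)=\K_3$ has $\operatorname{Aut}(\K_3)=S_3\cong\mathbb{Z}_3\rtimes H_2$, so it is a normal arc-transitive circulant of order $3$. It is not isomorphic to $G(3,1)=\overrightarrow{C}_3$.
\end{itemize}
So (2), as literally stated, needs $p\ge 5$ or an explicit exception for $\K_3$.

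Finally, rename the number of complete factors when you apply Theorem~\ref{arccirculant-explicit-1}, since $r$ already denotes the valency.
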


\bigskip
\bigskip

\section{ Normal $2$-distance-transitive  circulant digraphs of even order}

This section is devoted to the classification of all 2-distance-transitive  normal circulant digraphs of even order.

Let $\Gamma$ be a finite arc-transitive oriented graph of diameter 2. Suppose that there exists an arc $(u,v)$ such that the out-neighbourhood of $v$ equals the set of vertices at directed distance 2 from $u$. As shown in the first lemma,  $\Gamma$ is isomorphic to the directed 3-cycle.

\begin{lemma}\label{lem-1=2-diam=2}
Let \(\Gamma\) be a finite arc-transitive oriented graph of diameter \(2\). Suppose that there is an arc \((u,v)\) such that
\(
\Gamma^+(v)=\Gamma_2^+(u).
\)
Then \(
\Gamma\cong \overrightarrow{C_3}.
\)
\end{lemma}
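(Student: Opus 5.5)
The plan is to use arc-transitivity to spread the hypothesis $\Gamma^+(v)=\Gamma_2^+(u)$ from the single arc $(u,v)$ to every arc. Counting then pins down $\Gamma^-(u)$, and an adjacency argument forces the valency to be $1$. Throughout, let $k$ be the valency, write $D:=\Gamma_2^+(u)$, and recall that an arc-transitive digraph is vertex-transitive, hence $k$-regular.

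\emph{Step 1: the hypothesis holds at every arc.} Since $\operatorname{Aut}(\Gamma)$ is transitive on arcs, every arc $(x,y)$ satisfies $\Gamma^+(y)=\Gamma_2^+(x)$. In particular, all out-neighbours of $u$ have the same out-neighbourhood $D$, and $|D|=k$. Because $\Gamma$ has diameter $2$, the vertex set is the disjoint union
\[
V(\Gamma)=\{u\}\cup\Gamma^+(u)\cup D ,
\]
so $|V(\Gamma)|=2k+1$.

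\emph{Step 2: identify $\Gamma^-(u)$.} An in-neighbour of $u$ cannot be $u$. It cannot lie in $\Gamma^+(u)$ either, because $\Gamma$ is oriented and so has no $2$-cycles. Hence $\Gamma^-(u)\subseteq D$, and since both sets have size $k$, we get $\Gamma^-(u)=D$. By Step 1 applied at an arbitrary vertex $x$, this gives $\Gamma^-(x)=\Gamma_2^+(x)$ for every $x$.

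Next fix $w\in D$. Every vertex of $\Gamma^+(u)$ has out-neighbourhood $D\ni w$, so $\Gamma^+(u)\subseteq\Gamma^-(w)$. Counting again gives $\Gamma^-(w)=\Gamma^+(u)$.

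\emph{Step 3: locate and pin down $\Gamma^+(w)$.} Since $w\in D=\Gamma^-(u)$, we have $u\in\Gamma^+(w)$. Because $\Gamma^-(w)=\Gamma^+(u)$ and $\Gamma$ is oriented, $\Gamma^+(w)$ meets neither $\Gamma^+(u)$ nor $\{w\}$. Therefore $\Gamma^+(w)\subseteq\{u\}\cup(D\setminus\{w\})$.

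Now apply Step 1 to the arcs leaving $w$. Every out-neighbour $y$ of $w$ satisfies
\[
\Gamma^+(y)=\Gamma_2^+(w)=\Gamma^-(w)=\Gamma^+(u),
\]
using Step 2 at the vertex $w$. Suppose some $d\in D$ were an out-neighbour of $w$. Then $u\in\Gamma^+(d)$, because $d\in D=\Gamma^-(u)$. But $\Gamma^+(d)=\Gamma^+(u)$, and $u\notin\Gamma^+(u)$, a contradiction. Hence $\Gamma^+(w)=\{u\}$, so $k=1$ and $|V(\Gamma)|=3$.

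A connected $1$-regular oriented graph on $3$ vertices is $\overrightarrow{C_3}$, which proves the lemma.

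The main point needing care is Step 1's transfer of the hypothesis to all arcs, and in particular its use at the vertex $w$ in Step 3. This is exactly where arc-transitivity, rather than mere vertex-transitivity, is needed. Diameter $2$ is used to make the vertex partition exhaustive, and orientedness is used twice to exclude the $2$-cycles; the remaining steps are counting.
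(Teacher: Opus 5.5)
Your proof is correct and follows essentially the same route as the paper. You transfer the hypothesis to every arc by arc-transitivity, count $|V(\Gamma)|=2k+1$, show $\Gamma^-(w)=\Gamma^+(u)$ for $w\in\Gamma_2^+(u)$, and then exclude every out-neighbour of $w$ other than $u$ to force $k=1$. The one small deviation is how you rule out $w\to d$ for $d\in\Gamma_2^+(u)$: you use $\Gamma^-(u)=\Gamma_2^+(u)$ together with the hypothesis at the arc $(w,d)$. The paper instead notes directly that $\Gamma^-(d)=\Gamma^+(u)$ does not contain $w$.
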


\begin{proof}
Let
\(
m=|\Gamma^+(u)|
\)
be the out-valency of \(u\). Since \(\Gamma\) is arc-transitive, every vertex has out-valency and in-valency \(m\). By hypothesis,
\(
|\Gamma_2^+(u)|=|\Gamma^+(v)|=m.
\)
Because \(\Gamma\) has directed diameter \(2\), every vertex lies in
\[
\{u\}\cup \Gamma^+(u)\cup \Gamma_2^+(u).
\]
These three sets are pairwise disjoint: \(u\notin \Gamma^+(u)\) since there are no loops, \(u\notin \Gamma_2^+(u)\) since otherwise there would be a directed \(2\)-cycle, and \(\Gamma^+(u)\cap \Gamma_2^+(u)=\varnothing\) by definition. Hence
\[
|V(\Gamma)|=1+m+m=2m+1.
\]

By arc-transitivity, the condition \(\Gamma^+(v)=\Gamma_2^+(u)\) for the arc \((u,v)\) implies that for every arc \((x,y)\),
\[
\Gamma^+(y)=\Gamma_2^+(x). \tag{1}
\]
In particular, for every \(a\in \Gamma^+(u)\), applying (1) to the arc \((u,a)\) gives
\(
\Gamma^+(a)=\Gamma_2^+(u).
\)
Thus every vertex of \(\Gamma^+(u)\) points to every vertex of \(\Gamma_2^+(u)\). Consequently, for each \(b\in \Gamma_2^+(u)\), the in-neighbourhood of \(b\) contains \(\Gamma^+(u)\). Since both sets have size \(m\), we obtain
\[
\Gamma^-(b)=\Gamma^+(u) \qquad \text{for all } b\in \Gamma_2^+(u). \tag{2}
\]

Now fix \(b\in \Gamma_2^+(u)\). We determine its out-neighbours. If \(b\to c\) for some \(c\in \Gamma_2^+(u)\), then \(b\in \Gamma^-(c)\), contradicting (2), since \(b\notin \Gamma^+(u)\). If \(b\to a\) for some \(a\in \Gamma^+(u)\), then \(a\to b\) because \(\Gamma^+(a)=\Gamma_2^+(u)\), so \(a\leftrightarrow b\) is a directed \(2\)-cycle, which is forbidden. Therefore the only possible out-neighbour of \(b\) is \(u\). Hence
\(
|\Gamma^+(b)|\le 1.
\)
But \(|\Gamma^+(b)|=m\), so \(m\le 1\). Since there is an arc, \(m\ge 1\). Thus
\(
m=1.
\)
Therefore
\(
|V(\Gamma)|=2m+1=3.
\)
Write
\[
\Gamma^+(u)=\{v\},\qquad \Gamma_2^+(u)=\{w\}.
\]
Then \(u\to v\) and \(v\to w\). From the argument above, the only possible out-neighbour of \(w\) is \(u\), so \(w\to u\). Hence
\(
u\to v\to w\to u
\)
is a directed cycle of length \(3\), and \(\Gamma\) has exactly these three vertices and three arcs. Therefore
\(
\Gamma\cong \overrightarrow{C_3}.
\)
This completes the proof.
\end{proof}

\begin{lemma}\label{prop-1=2}
Let $\Gamma$ be a finite arc-transitive oriented graph of diameter at
least $2$. Suppose that there is an arc
$(u,v)$ such that
\( \Gamma^+(v)=\Gamma_2^+(u).\)
Then either $\Gamma$ is a directed cycle or
\(\Gamma\cong C_r(m,1)
\)
for some integers $m\ge 2$ and $r\ge 3$.
\end{lemma}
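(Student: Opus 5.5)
By arc-transitivity, the hypothesis spreads to every arc: \(\Gamma^+(y)=\Gamma_2^+(x)\) for every arc \((x,y)\), exactly as in (1) in the proof of Lemma~\ref{lem-1=2-diam=2}. Let \(m\) be the out-valency. Fix \(u\). Every out-neighbour \(a\) of \(u\) has \(\Gamma^+(a)=\Gamma_2^+(u)\), so all vertices of \(\Gamma^+(u)\) have the same out-neighbourhood. Define an equivalence on vertices by \(x\sim x'\) iff \(\Gamma^+(x)=\Gamma^+(x')\). This relation is \(\operatorname{Aut}(\Gamma)\)-invariant, so its classes form a block system \(\mathcal B\).

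The plan is to show that \(\Gamma\) is the lexicographic product of its quotient with \(\overline{\K}_b\), and that the quotient is a directed cycle. The steps are as follows.

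\textbf{Step 1.} Show that \(\Gamma^+(u)\) is a union of classes, indeed a single class. Take \(x\sim a\) with \(a\in\Gamma^+(u)\) and an in-neighbour \(w\) of \(x\). Then \(\Gamma^+(w)=\Gamma_2^+(\cdot)\) relations give \(\Gamma^+(w)=\Gamma^-\)-structure, which I would try to control with in-neighbourhood counting. A cleaner route is to use the dual statement: applying (1) and counting gives \(|\Gamma_2^+(u)|=m\), and every vertex of \(\Gamma^+(u)\) points to every vertex of \(\Gamma_2^+(u)\). So each \(b\in\Gamma_2^+(u)\) satisfies \(\Gamma^-(b)=\Gamma^+(u)\), as in (2). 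Since \(u\) was arbitrary and every vertex is some \(b\) (each vertex has an in-neighbour at distance 2 from something by transitivity and diameter \(\ge2\)), in-neighbourhoods are exactly out-neighbourhoods of vertices. Symmetrically, the sets \(\Gamma^+(x)\) partition \(V\) into blocks \(B\) of size \(m\), and each such block is both the full in-neighbourhood of every vertex of the next block and the full out-neighbourhood of every vertex of the previous one.

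\textbf{Step 2.} Hence the quotient \(\Gamma_{\mathcal B}\) has out-valency \(1\) and \(\Gamma\cong\Gamma_{\mathcal B}[\overline{\K}_m]\), with complete arcs between consecutive blocks. I must also check that vertices in the same block \(B=\Gamma^+(u)\) are non-adjacent. This holds because they share out-neighbourhood \(\Gamma_2^+(u)\), which is disjoint from \(\Gamma^+(u)\). Connectivity, together with in- and out-valency \(1\) in the quotient, forces \(\Gamma_{\mathcal B}\cong\overrightarrow{C_r}\).

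\textbf{Step 3.} Conclude. If \(m=1\), \(\Gamma\) is a directed cycle. If \(m\ge2\), then \(\Gamma\cong\overrightarrow{C_r}[\overline{\K}_m]=C_r(m,1)\). Here \(r\ge3\), since \(r\le2\) would create 2-cycles or loops, and the oriented-graph and diameter conditions exclude these.

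The main obstacle is Step 1: proving that the out-neighbourhoods genuinely partition the vertex set. Concretely, if \(\Gamma^+(x)\cap\Gamma^+(y)\neq\varnothing\), I need \(\Gamma^+(x)=\Gamma^+(y)\). I would first try this: for \(z\) in the intersection, (2) applied with base \(x\) and with base \(y\) gives \(\Gamma^-(c)=\Gamma^+(x)\) and \(\Gamma^-(c)=\Gamma^+(y)\) for any \(c\in\Gamma^+(z)=\Gamma_2^+(x)=\Gamma_2^+(y)\). This yields the equality, provided \(\Gamma^+(z)\neq\varnothing\), which holds since \(m\ge1\).
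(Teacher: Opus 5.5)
Your argument is correct, but it is organized differently from the paper's. The paper works outward from a fixed vertex $u$ along a geodesic. It first disposes of diameter $2$ by Lemma~\ref{lem-1=2-diam=2}. It then takes the largest $t$ with $\Gamma_j^+(u)=\Gamma^+(u_{j-1})$ for $2\le j\le t$ and proves, through a chain of claims, that the distance layers of $u$ assemble into $C_d(m,1)$: each layer has size $m$ and is completely joined to the next, $t\neq d$, the last such layer points back to $u$, and $t=d-1$. Claim~3 also needs a separate argument about directed cycles of length $t$.

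You argue globally instead. The identity $\Gamma^-(c)=\Gamma^+(x)$ for every $c\in\Gamma_2^+(x)$ holds at every base vertex $x$. It shows that two out-neighbourhoods with a common vertex $z$ coincide, since both equal $\Gamma^-(c)$ for any $c\in\Gamma^+(z)$. So the out-neighbourhoods form an $\Aut(\Gamma)$-invariant partition of $V(\Gamma)$ into blocks of size $m$. The arcs are exactly all pairs from a block to its successor, and the successor map is a fixed-point-free permutation of the blocks. Connectivity makes it a single $r$-cycle, and orientedness gives $r\ge 3$.

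Your route buys a shorter proof with no diameter case split: Lemma~\ref{lem-1=2-diam=2} is just the case $m=1$, $r=3$. It also makes clear that the diameter hypothesis is redundant, since $|\Gamma_2^+(u)|=|\Gamma^+(v)|=m\ge 1$, and it produces $\overrightarrow{C_r}[\overline{\K}_m]$ directly. The paper's layer analysis in return records the distance structure from $u$: for $m\ge 2$ the diameter is $r$, and $\Gamma_r^+(u)$ consists of the other vertices of the block of $u$. The statement does not need this.

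When writing it up, make these small fixes:
\begin{itemize}
\item Delete the abandoned first attempt in Step~1.
\item Either drop the relation $\sim$, or note that its classes are exactly the out-neighbourhoods: if $\Gamma^+(x)=\Gamma^+(a)$ with $a\in\Gamma^+(u)$, then $x\in\Gamma^-(c)=\Gamma^+(u)$ for any $c\in\Gamma_2^+(u)$.
\item Say explicitly that the blocks cover $V(\Gamma)$ because every vertex has an in-neighbour.
\item Say explicitly that the predecessor of a block is unique, by the same in-neighbourhood identity.
\end{itemize}
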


\begin{proof}
If $\Gamma$ has   diameter  $2$, then by Lemma \ref{lem-1=2-diam=2}, \(\Gamma\cong \overrightarrow{C_3}.\)
Henceforth, we suppose  that $\Gamma$ has  diameter at
least $3$.

If the out-valency of $\Gamma$ is   $1$, then $\Gamma$ is a directed cycle. In the remainder, assume
that the out-valency   is at least $2$, and set
\(m=|\Gamma^+(u)|. \)
Since $\Gamma$ is arc-transitive, every vertex has out-valency and in-valency $m$. Moreover,
\[
|\Gamma_2^+(u)|=|\Gamma^+(v)|=m.
\]
Because $G=\Aut(\Gamma)$ is transitive on arcs, the condition
$\Gamma^+(v)=\Gamma_2^+(u)$ implies that for every arc $(x,y)$,
\[
\Gamma^+(y)=\Gamma_2^+(x). \tag{1}
\]
In particular, for every $z\in\Gamma^+(u)$, we have
\( \Gamma^+(z)=\Gamma_2^+(u).\)
Thus
\( \Gamma^+(u)\to\Gamma_2^+(u) \)
is a complete bipartite oriented graph, and both layers have size $m$.

Let $d$ be the diameter of $\Gamma$, and let
\( (u=u_0,u_1,\dots,u_d) \)
be a directed $d$-geodesic. Since $(u,u_1)$ is an arc, by (1),
\(
\Gamma^+(u_1)=\Gamma_2^+(u).
\)

Let $t$ be the largest integer with $2\le t\le d$ such that
\[
\Gamma_j^+(u)=\Gamma^+(u_{j-1})
\]
for every $j=2,\dots,t$. Such a $t$ exists because the condition holds for
$j=2$. We shall prove that $t=d-1$.

\medskip

\noindent\textbf{Claim 1.}
For every $2\le j\le t$, the following hold:
\begin{itemize}
\item[(i)] $|\Gamma_j^+(u)|=m$;
\item[(ii)] $\Gamma_{j-1}^+(u)\to\Gamma_j^+(u)$ is a complete bipartite
oriented digraph;
\item[(iii)] for every $x\in\Gamma_j^+(u)$,
\( \Gamma^-(x)=\Gamma_{j-1}^+(u). \)
\end{itemize}

\emph{Proof of Claim 1.}
We prove the claim by induction on $j$. For $j=2$, the statements follow from
the observation above. Suppose the claim holds for $j-1$, where $3\le j\le t$.
Then \( \Gamma_j^+(u)=\Gamma^+(u_{j-1}). \)
Since $u_{j-1}\in\Gamma_{j-1}^+(u)$ and
$\Gamma_{j-1}^+(u)=\Gamma^+(u_{j-2})$, the arc
$(u_{j-2},u_{j-1})$ satisfies, by (1),
\(
\Gamma^+(u_{j-1})=\Gamma_2^+(u_{j-2}).
\)
Hence
\[
\Gamma_j^+(u)=\Gamma_2^+(u_{j-2})
=\bigcup_{z\in\Gamma_{j-1}^+(u)}\Gamma^+(z).
\]
For each $z\in\Gamma_{j-1}^+(u)$, again by (1) applied to the arc
$(u_{j-2},z)$, we have
\[
\Gamma^+(z)=\Gamma_2^+(u_{j-2})=\Gamma_j^+(u).
\]
Therefore every vertex of $\Gamma_{j-1}^+(u)$ points to every vertex of
$\Gamma_j^+(u)$. Since $|\Gamma_j^+(u)|=m$ and every vertex has in-valency
$m$, each $x\in\Gamma_j^+(u)$ has in-neighbor set exactly
$\Gamma_{j-1}^+(u)$. Thus the claim follows.

\medskip

\noindent\textbf{Claim 2.} $t\neq d$.

\emph{Proof of Claim 2.}
Suppose that $t=d$. Then
\( \Gamma_d^+(u)=\Gamma^+(u_{d-1}). \)
Let $x\in\Gamma_d^+(u)$. By Claim 1,
\(
\Gamma^-(x)=\Gamma_{d-1}^+(u).
\)
Hence $x$ cannot point to any vertex in $\Gamma_j^+(u)$ for $2\le j\le d$,
because those vertices have in-neighbor set exactly $\Gamma_{j-1}^+(u)$,
which does not contain $x$.
Thus every out-neighbor of $x$ lies in $\{u\}\cup\Gamma_1^+(u)$.

Suppose first that $x\to u$. By (1), applied to the arc $(x,u)$,
\(\Gamma^+(u)=\Gamma_2^+(x).
\)
If $x$ also points to some $y\in\Gamma_1^+(u)$, then since
$\Gamma^+(u)=\Gamma_1^+(u)$ and $\Gamma^+(y)=\Gamma_2^+(u)$, the set
$\Gamma_2^+(x)$ contains both $\Gamma_1^+(u)$ and $\Gamma_2^+(u)$. These two
sets are disjoint and both have size $m$, so
\( |\Gamma_2^+(x)|\ge 2m>m, \)
contradicting $|\Gamma_2^+(x)|=m$. Therefore $x$ has no out-neighbor in
$\Gamma_1^+(u)$. Since $x$ cannot point to $\Gamma_j^+(u)$ for $j\ge2$, the
only possible out-neighbor of $x$ is $u$. But $x$ has out-valency $m\ge2$,
a contradiction. Hence $x$ does not point to $u$.

Therefore every out-neighbor of $x$ lies in $\Gamma_1^+(u)$. Since
$|\Gamma_1^+(u)|=m$ and $x$ has out-valency $m$, we get
\(
\Gamma^+(x)=\Gamma_1^+(u)
\)
for every $x\in\Gamma_d^+(u)$.
Now fix $y\in\Gamma_1^+(u)$. The vertex $y$ has $u$ as an in-neighbor.
Moreover, every $x\in\Gamma_d^+(u)$ points to $y$. Thus $y$ has at least
\(
1+|\Gamma_d^+(u)|=1+m
\)
in-neighbors, contradicting the fact that the in-valency of $y$ is $m$.
Therefore $t\neq d$.

\medskip

So $t\le d-1$. Write
\[
\Gamma_i^+(u)=\{v_{i,1},v_{i,2},\dots,v_{i,m}\}
\]
for $1\le i\le t$, where $v_{i,1}=u_i$.

\medskip

\noindent\textbf{Claim 3.}
For every $i=1,\dots,m$,
\(
\Gamma^+(v_{t,i})\subseteq \Gamma_{t+1}^+(u)\cup\{u\}.
\)

\emph{Proof of Claim 3.}
First note that for every $i$,
\[
\Gamma^+(v_{t,i})=\Gamma_2^+(u_{t-1}),
\]
because $(u_{t-1},v_{t,i})$ is an arc and we may apply (1). Thus all the
sets $\Gamma^+(v_{t,i})$ are equal.

Let $y\in\Gamma_j^+(u)$ for some $2\le j\le t$. By Claim 1,
\( \Gamma^-(y)=\Gamma_{j-1}^+(u).
\)
Since $v_{t,i}\in\Gamma_t^+(u)$ and $\Gamma_t^+(u)\not\subseteq
\Gamma_{j-1}^+(u)$ for $j\le t$, we have $v_{t,i}\not\to y$. Hence
\[
\Gamma^+(v_{t,i})\cap\left(\bigcup_{j=2}^t\Gamma_j^+(u)\right)=\emptyset.
\]

Next we show that $v_{t,i}$ cannot point to any vertex of $\Gamma_1^+(u)$.
Suppose, on the contrary, that
\( v_{t,i}\to v \)
for some $v\in\Gamma_1^+(u)$.
If \(t=2\), then \(v\to v_{t,i}\) is an arc, because \(\Gamma_1^+(u)\to\Gamma_2^+(u)\) is complete bipartite, hence \(v_{t,i}\to v\to v_{t,i}\) is a directed \(2\)-cycle, contradicting that $\Gamma$ is oriented. If \(t\ge 3\), then since \(\Gamma_{j-1}^+(u)\to\Gamma_j^+(u)\) is complete bipartite for \(j=2,\dots,t\), we have the directed cycle
\[
v\to v_{2,1}\to v_{3,1}\to\cdots\to v_{t-1,1}\to v_{t,i}\to v
\]
of length \(t\).
 By vertex transitivity, $u$ also lies on a directed cycle of
length $t$. Let $w$ be the predecessor of $u$ on such a cycle. Along the
cycle, the distance from $u$ to $w$ is $t-1$. If $w\in\Gamma_1^+(u)$, then
$w\to u$ would form a directed $2$-cycle with $u\to w$, impossible. If
$w\in\Gamma_j^+(u)$ for some $2\le j\le t-2$, then by Claim 1, $w$ points
to every vertex of $\Gamma_{j+1}^+(u)$, so it cannot point to $u$. Thus
$w\in\Gamma_{t-1}^+(u)$. But Claim 1 also says that every vertex of
$\Gamma_{t-1}^+(u)$ points to $\Gamma_t^+(u)$, not to $u$, a contradiction.
Therefore $v_{t,i}$ has no out-neighbor in $\Gamma_1^+(u)$.

Thus every out-neighbor of $v_{t,i}$ lies in
$\Gamma_{t+1}^+(u)\cup\{u\}$, proving Claim 3.

\medskip

\noindent\textbf{Claim 4.}
Every vertex of $\Gamma_t^+(u)$ points to $u$, and
\(
\Gamma^+(v_{t,i})=\Gamma_{t+1}^+(u)\cup\{u\}.
\)

\emph{Proof of Claim 4.}
Suppose that $v_{t,i}\not\to u$. Then by Claim 3,
\(
\Gamma^+(v_{t,i})\subseteq\Gamma_{t+1}^+(u).
\)
On the other hand, every vertex of $\Gamma_{t+1}^+(u)$ is an out-neighbor of
some vertex of $\Gamma_t^+(u)$. Since all the sets $\Gamma^+(v_{t,k})$ are
equal, we get
\(
\Gamma_{t+1}^+(u)\subseteq\Gamma^+(v_{t,i}).
\)
Hence
\[
\Gamma_{t+1}^+(u)=\Gamma^+(v_{t,i}).
\]
But also
\( \Gamma^+(v_{t,i})=\Gamma_2^+(u_{t-1})=\Gamma^+(u_t),\)
so
\(
\Gamma_{t+1}^+(u)=\Gamma^+(u_t).
\)
This contradicts the maximality of $t$. Therefore
\(v_{t,i}\to u\)
for every $i=1,\dots,m$.

Since $|\Gamma_t^+(u)|=m=|\Gamma^-(u)|$, it follows that
\[
\Gamma^-(u)=\Gamma_t^+(u).
\]
By Claim 3 and the fact that $u\in\Gamma^+(v_{t,i})$, we obtain
\[
\Gamma^+(v_{t,i})=\Gamma_{t+1}^+(u)\cup\{u\}.
\]
In particular,
\(
|\Gamma_{t+1}^+(u)|=m-1.
\)

\medskip

\noindent\textbf{Claim 5.} $t+1=d$.

\emph{Proof of Claim 5.}
Take $w\in\Gamma_{t+1}^+(u)$. Since every vertex of $\Gamma_t^+(u)$ points
to $w$ and $w$ has in-valency $m$, we have
\(
\Gamma^-(w)=\Gamma_t^+(u).
\)
By (1), applied to the arc $(v_{t,1},w)$,
\( \Gamma^+(w)=\Gamma_2^+(v_{t,1}). \)
But \( \Gamma^+(v_{t,1})=\Gamma_{t+1}^+(u)\cup\{u\}.
\)
Therefore
\[
\Gamma_2^+(v_{t,1})
=
\Gamma^+(u)\cup\bigcup_{y\in\Gamma_{t+1}^+(u)}\Gamma^+(y).
\]
Since $\Gamma^+(u)=\Gamma_1^+(u)$, the set $\Gamma_1^+(u)$ is contained in
$\Gamma^+(w)$. But $|\Gamma_1^+(u)|=m$ and $w$ has out-valency $m$, so
\(
\Gamma^+(w)=\Gamma_1^+(u).
\)
If $t+1<d$, then there exists a vertex
\( z\in\Gamma_{t+2}^+(u).
\)
By definition of $\Gamma_{t+2}^+(u)$, there is some $w\in\Gamma_{t+1}^+(u)$
such that $w\to z$. But we just proved that $\Gamma^+(w)=\Gamma_1^+(u)$,
so $z\in\Gamma_1^+(u)$, contradiction. Hence $t+1=d$.

\medskip

Thus $t=d-1$. Define blocks
\[
V_0=\{u\}\cup\Gamma_d^+(u),
\]
and for $1\le i\le d-1$,
\[
V_i=\Gamma_i^+(u).
\]
We have
\(
|V_0|=1+|\Gamma_d^+(u)|=1+(m-1)=m,
\)
and for $1\le i\le d-1$,
\(
|V_i|=m.
\)
From Claims 1, 4 and 5, the arcs are precisely all arcs from $V_i$ to
$V_{i+1}$, where the indices are read modulo $d$. Therefore
\[
\Gamma\cong C_d(m,1).
\]
Setting \(r=d\), we obtain \(\Gamma\cong C_r(m,1)\) with \(r\ge 3\) and \(m\ge 2\).
This completes the proof.
\end{proof}

Let $n\ge 3$, let $T=\mathbb{Z}_n$ be the cyclic group of order $n$ under addition, and let
$S\subseteq T\setminus\{0\}$ be such that $T=\langle S\rangle$. Consider the
circulant digraph $\Gamma=\Cay(T,S)$. Denote by $u= 0_T$ the identity element of $T$.
In the additive group $T$,  $-S = \{-s : s\in S\}$.

The group of units modulo $n$, denoted $\mathbb{Z}_n^\times$
or $(\mathbb{Z}/n\mathbb{Z})^\times$, is the multiplicative group
of residue classes modulo $n$ that are coprime to $n$:
\[
\mathbb{Z}_n^\times = \{ a \in \mathbb{Z}/n\mathbb{Z} : \gcd(a,n)=1 \}.
\]
In particular, $\Aut(T)\cong \mathbb{Z}_n^\times$, which is cyclic if and only if $n=1,2,4p^k,2p^k$, $p$ is an odd prime.

\begin{lemma}\label{lem:main-1}
Let \(T\) be a cyclic group of order \(n\), and let \(S\subseteq T\) satisfy
\(T=\langle S\rangle\). Let
\(\Gamma=\operatorname{Cay}(T,S)\) be an arc-transitive circulant
digraph of order \(n\ge 3\) and valency at least \(2\). Let \((u,v)\) be an arc
with \(u=0_T\). If the right regular representation \(R(T)\) is normal in
\(G:=\Aut(\Gamma)\), then the following statements hold.
\begin{enumerate}[{\rm (1)}]
\item every element of \(S\) has order \(n\);
\item if \(\Gamma\) is \(2\)-distance-transitive, then all elements of
\(\Gamma_2^+(u)\) have the same order in \(T\);
\item \(G_u\) acts faithfully and regularly on \(S\), and \(|G_u|=|S|\);
\item if \(1\in S\), then \(S\) is a multiplicative subgroup of
\(\mathbb{Z}_n^\times\).
\end{enumerate}
\end{lemma}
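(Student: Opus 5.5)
The whole argument rests on Lemma~\ref{cayley-normal}. Since $R(T)\trianglelefteq G$, we have $G=R(T)\rtimes \Aut(T,S)$ and $G_u=\Aut(T,S)$. Here $u=0_T$, and $G_u$ acts on $T$ naturally as group automorphisms, that is, as multiplication by units $a\in\mathbb Z_n^\times$. Every part of the statement will be read off from this description.

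\textbf{Parts (1) and (3).} Arc-transitivity gives that $G_u$ is transitive on $\Gamma^+(u)=S$, so $S$ is a single $\Aut(T,S)$-orbit. Automorphisms preserve element orders, so all elements of $S$ have the same order. Since $T=\langle S\rangle$ and $T$ is cyclic, the subgroup generated by elements all of order $k$ is the unique subgroup of order $k$. Hence $k=n$, which gives (1).

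For faithfulness in (3), take $s\in S$; by (1) it is a generator of $T$. An automorphism fixing $s$ fixes $\langle s\rangle=T$ pointwise, so it is trivial. Thus every point stabiliser of $G_u$ on $S$ is trivial, so $G_u$ acts semiregularly, and in particular faithfully, on $S$. Combined with transitivity, the action is regular, and $|G_u|=|S|$.

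\textbf{Part (2).} Assume $2$-distance-transitivity. Then $G_u$ is transitive on $\Gamma_2^+(u)$, because $G$ is transitive on ordered pairs at distance $2$ and we may first move the first vertex to $u$ using $R(T)$. Since $G_u$ consists of group automorphisms, the elements of $\Gamma_2^+(u)$ form one $\Aut(T,S)$-orbit, and therefore they all have the same order.

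\textbf{Part (4).} If $1\in S$, then by (3) the map $\Aut(T,S)\to S$ given by $a\mapsto a\cdot 1=a$ is a bijection. Here each automorphism is identified with its multiplier $a\in\mathbb Z_n^\times$. This identifies $S$ as a set with the subgroup $\Aut(T,S)\le \mathbb Z_n^\times$. So $S$ is closed under multiplication and contains the identity $1$, and hence it is a multiplicative subgroup.

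\textbf{Main obstacle.} The only step needing care is the deduction in (1) from ``all elements of $S$ have equal order'' to ``order $n$''. This uses that in a cyclic group the elements of order $k$ generate exactly the subgroup of order $k$. The remaining steps are direct consequences of Lemma~\ref{cayley-normal}.
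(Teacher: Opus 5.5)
Your proof is correct and follows the paper's argument essentially step by step: Lemma~\ref{cayley-normal} identifies $G_u$ with $\Aut(T,S)\le\mathbb{Z}_n^\times$ acting by multiplication, and each of (1)--(4) is read off from that. The only small variation is in (3), where you get trivial point stabilisers directly from each $s\in S$ being a generator, whereas the paper deduces regularity from faithfulness together with the fact that a transitive abelian group has all point stabilisers equal; both arguments are fine.
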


\begin{proof}
Let \(G=\Aut(\Gamma)\). Since \(R(T)\unlhd G\), by Lemma \ref{cayley-normal},
\(
G=R(T)\rtimes G_u,
\)
where
\(
G_u=\Aut(T,S)=\{\varphi\in\Aut(T):\varphi(S)=S\}\le \mathbb{Z}_n^\times.
\)
Identify \(T\) with \(\mathbb{Z}_n\). Then the stabilizer of \(u=0\) is exactly
\(G_u\), acting on \(T\) by multiplication.

By arc-transitivity, \(G_u\) acts transitively on
\(S=\Gamma_1^+(u)\). Hence all elements of \(S\) have the same order in \(T\).
Since \(T\) is cyclic and \(T=\langle S\rangle\), this common order must be \(n\).
Thus \(S\subseteq \mathbb{Z}_n^\times\).

Assume further that \(\Gamma\) is \(2\)-distance-transitive. Then \(G_u\) acts
transitively on
\[
\Gamma_2^+(u)=(S+S)\setminus(S\cup\{0\}).
\]
Therefore all elements of \(\Gamma_2^+(u)\) have the same order in \(T\).

Since \(S\) generates \(T\), the action of \(G_u\) on \(S\) is faithful.
As \(G_u\) is abelian and transitive on \(S\), all point stabilizers are equal;
if one were nontrivial, it would fix every point of \(S\), contradicting
faithfulness. Hence \(G_u\) acts regularly on \(S\), and so
\(|G_u|=|S|\).

Assume now that \(1\in S\). Then the \(G_u\)-orbit of \(1\) is contained in \(S\).
Since \(G_u\) acts freely on \(\mathbb{Z}_n^\times\), we have
\[
|G_u\cdot 1|=|G_u|=|S|.
\]
Because \(1\in S\), transitivity gives \(G_u\cdot 1=S\). Thus, under the
identification \(\Aut(T)\cong\mathbb{Z}_n^\times\),
\[
S=G_u\cdot 1=G_u.
\]
Therefore \(S\) is a multiplicative subgroup of \(\mathbb{Z}_n^\times\).
\end{proof}

\begin{lemma}\label{lem:even-2}
Let $T=\mathbb Z_n$ with $n$ even and $n\ge 3$, and let
$S\subseteq T$ satisfy
\( \langle S\rangle=T.
\)
Let $\Gamma=\Cay(T,S)$ be an arc-transitive circulant digraph of
valency at least $2$. If the right regular representation $R(T)$ is
normal in $\Aut(\Gamma)$, then for every arc $(u,v)$ of $\Gamma$,
\(
\Gamma^+(u)\cap \Gamma^+(v)=\varnothing.
\)

\end{lemma}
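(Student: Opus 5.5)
The plan is a parity argument: every element of \(S\) will be odd, so every out-neighbour of a vertex \(v\in S\) will be even, and the two out-neighbourhoods cannot meet. First I reduce to the base vertex. The right regular representation \(R(T)\) acts transitively on vertices and preserves arcs, so I may translate any arc \((u,v)\) to an arc \((0_T,v-u)\). Since translation carries out-neighbourhoods to out-neighbourhoods, \(\Gamma^+(u)\cap\Gamma^+(v)\) is a translate of \(\Gamma^+(0_T)\cap\Gamma^+(v-u)\). It therefore suffices to treat the arcs \((0_T,s_0)\) with \(s_0\in S\). With the arc convention \((g,s+g)\) of the paper we have \(\Gamma^+(0_T)=S\) and \(\Gamma^+(s_0)=S+s_0=\{s+s_0: s\in S\}\).

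Next I invoke Lemma~\ref{lem:main-1}(1). Its hypotheses hold: \(\Gamma\) is an arc-transitive circulant digraph of order \(n\ge 3\) and valency at least \(2\), \(T=\langle S\rangle\), and \(R(T)\unlhd\Aut(\Gamma)\). So every element of \(S\) has order \(n\) in \(T=\mathbb{Z}_n\). Identifying \(T\) with \(\mathbb{Z}_n\), this means \(S\subseteq\mathbb{Z}_n^\times\), i.e. every \(s\in S\) satisfies \(\gcd(s,n)=1\). Because \(n\) is even, every element of \(S\) is an odd residue. This is well defined modulo \(n\), since parity of residues is well defined modulo an even modulus.

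Finally I apply the parity count. For any \(s\in S\), the element \(s+s_0\) is a sum of two odd residues and hence is even modulo \(n\). Consequently \(\gcd(s+s_0,n)\ge 2\), so \(s+s_0\notin\mathbb{Z}_n^\times\), and in particular \(s+s_0\notin S\). Therefore \((S+s_0)\cap S=\varnothing\), which is \(\Gamma^+(0_T)\cap\Gamma^+(s_0)=\varnothing\). By the reduction in the first step, the same holds for every arc. I do not expect a real obstacle. The only points needing care are that parity is well defined in \(\mathbb{Z}_n\) precisely because \(n\) is even, and that the translation reduction respects the Cayley arc convention.
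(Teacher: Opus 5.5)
Your proposal is correct and is essentially the paper's argument. Lemma~\ref{lem:main-1}(1) puts $S$ inside $\mathbb{Z}_n^\times$, so for $n$ even every element of $S$ is odd and every element of $s_0+S$ is even, which gives $S\cap(s_0+S)=\varnothing$. The only cosmetic difference is that you reduce to arcs at $0_T$ by translations in $R(T)$ rather than by arc-transitivity; this is equally valid.
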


\begin{proof}
Assume that $R(T)\trianglelefteq \Aut(\Gamma)$. Then
\(
\Aut(\Gamma)=R(T)\rtimes \Aut(T,S).
\)
Because $\Gamma$ is arc-transitive, by Lemma \ref{lem:main-1},  all elements of $S$ have the same order $n$.

Now let $(u,v)$ be an arc of $\Gamma$. Then $v-u\in S$. Since $\Gamma$ is arc-transitive, it is enough to
show that
\[
\Gamma^+(0_T)\cap \Gamma^+(v)=\varnothing
\]
for every $v\in S$.

We have
\( \Gamma^+(0_T)=S,
\)
and
\(
\Gamma^+(v)=v+S=\{v+s:s\in S\}.
\)
For every $s\in S$, both $v$ and $s$ are odd, so $v+s$ is even.
Since $n$ is even, reduction modulo $n$ preserves parity. Hence every
element of $\Gamma^+(v)$ is even, while every element of
$\Gamma^+(0_T)=S$ is odd. Therefore
\[
\Gamma^+(u)\cap \Gamma^+(v)=\varnothing.
\]
This completes the proof.
\end{proof}

The following proposition determines  all 2-distance-transitive  normal circulant digraphs of even order.

\begin{prop}\label{lem:even-3}
Let $T=\mathbb Z_n$ with   $n\geq 3$  even, and let
$S\subseteq T$ satisfy $-S\cap S=\varnothing$ and
$\langle S\rangle=T$. Let $\Gamma=\operatorname{Cay}(T,S)$ be a
$2$-distance-transitive circulant digraph of valency at least $2$.
If the right regular representation $R(T)$ is normal in
$\operatorname{Aut}(\Gamma)$, then  $\Gamma$ is isomorphic to the  directed
cycle $\overrightarrow{C_n}$.
\end{prop}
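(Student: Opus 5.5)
The plan is to show that the hypotheses force $\Gamma^+(v)=\Gamma_2^+(u)$ for an arc $(u,v)$. Then Lemma~\ref{prop-1=2} applies, and normality rules out the non-cycle alternative. Put $u=0_T$, let $v\in S$, set $m=|S|\ge 2$, and let $G=\Aut(\Gamma)$.

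\emph{Step 1: the diameter is at least $2$.} Since $S\cap -S=\varnothing$, $\Gamma$ is an oriented graph. It cannot have diameter $1$, because an oriented graph on $n\ge 3$ vertices cannot have every ordered pair of distinct vertices joined by an arc. Hence $\Gamma_2^+(u)\neq\varnothing$.

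\emph{Step 2: $\Gamma^+(v)\subseteq\Gamma_2^+(u)$.} The set $\Gamma^+(v)=v+S$ has size $m$. By Lemma~\ref{lem:even-2} it is disjoint from $\Gamma^+(u)=S$. It does not contain $0$, since $0\in v+S$ would give $-v\in S$, contradicting $S\cap -S=\varnothing$. Every element of $v+S$ is reached from $u$ by a directed path of length $2$, so $v+S\subseteq\Gamma_2^+(u)$.

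\emph{Step 3: $|\Gamma_2^+(u)|\le m$, hence equality.} By Lemma~\ref{lem:main-1}(3), $|G_u|=|S|=m$. Because $\Gamma$ is $2$-distance-transitive, $G_u$ is transitive on $\Gamma_2^+(u)$, so $|\Gamma_2^+(u)|\le |G_u|=m$. Together with Step 2 this gives $\Gamma^+(v)=\Gamma_2^+(u)$.

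\emph{Step 4: apply Lemma~\ref{prop-1=2} and exclude $C_r(m,1)$.} Lemma~\ref{prop-1=2} now says that $\Gamma$ is either a directed cycle or $C_r(m,1)$ for some $r\ge 3$ and $m\ge 2$. This last step is the delicate one. Lemma~\ref{crv-not-circdig-1} only shows that one particular cyclic regular subgroup is non-normal, whereas we need a contradiction with our given normal regular subgroup $R(T)$. So I would argue by orders rather than cite that lemma. By \cite[Theorem 2.8]{Praeger-1989}, $\Aut(C_r(m,1))=S_m\wr\mathbb Z_r$, which has order $(m!)^r r$. The vertex stabilizer therefore has order
\[
\frac{(m!)^r r}{rm}=\frac{(m!)^r}{m}.
\]
For $m\ge 2$ and $r\ge 3$ this is strictly larger than $m$, which contradicts $|G_u|=m$ from Lemma~\ref{lem:main-1}(3). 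Hence $\Gamma$ is a directed cycle, i.e.\ $\Gamma\cong\overrightarrow{C_n}$.

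Note that $\overrightarrow{C_n}$ has valency $1$, so under the stated hypothesis of valency at least $2$ this argument actually shows that no such normal digraph of even order exists. The conclusion should be read in that sense: the only normal $2$-distance-transitive oriented circulant of even order is $\overrightarrow{C_n}$.
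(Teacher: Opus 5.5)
Your argument is correct. Steps 1--3 coincide with the paper's proof: Lemma~\ref{lem:even-2} gives $\Gamma^+(u)\cap\Gamma^+(v)=\varnothing$, hence $\Gamma^+(v)\subseteq\Gamma_2^+(u)$. The bound $|\Gamma_2^+(u)|\le|G_u|=|S|$ from Lemma~\ref{lem:main-1}(3) then forces equality, and Lemma~\ref{prop-1=2} applies. The paper phrases the bound as divisibility $|\Gamma_2^+(u)|\mid|G_u|$ rather than $\le$, which makes no difference.

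The genuine divergence is how you exclude $C_r(m,1)$. The paper simply cites Lemma~\ref{crv-not-circdig-1}. As you note, that lemma only shows that one specific cyclic regular subgroup $H=\langle g\rangle$ is non-normal. Under an isomorphism $\Gamma\cong C_r(m,1)$, the group $R(T)$ is carried to some cyclic regular subgroup of $\Aut(C_r(m,1))$ that need not be conjugate to $H$, so the citation alone does not settle the case.

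Your order count closes this gap and is independent of which cyclic regular subgroup $R(T)$ corresponds to. Normality of $R(T)$ forces $|\Aut(\Gamma)_u|=m$ by Lemma~\ref{lem:main-1}(3), whereas $|\Aut(C_r(m,1))_u|=(m!)^r/m>m$ for $m\ge2$, $r\ge3$. You do not even need the full equality $\Aut(C_r(m,1))=S_m\wr\mathbb Z_r$ from Praeger's theorem. The easy containment $S_m\wr\mathbb Z_r\le\Aut(C_r(m,1))$, visible from $C_r(m,1)\cong\overrightarrow{C_r}[\overline{\K_m}]$, already gives the lower bound. The same argument would also repair the identical citation in the necessity part of Proposition~\ref{p-power-normal-2dt}.

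Your closing remark is also right. With $|S|\ge2$ the directed-cycle alternative is impossible too, so the proposition really asserts that no such digraph exists. That is exactly how it is used in Theorem~\ref{2dt-normal-them-1}.
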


\begin{proof}
Since $\Gamma=\operatorname{Cay}(T,S)$ is $2$-distance-transitive, it
is arc-transitive. The condition   $-S\cap S=\varnothing$ implies that  $\Gamma$ is an oriented graph.  Let $u=0$ and let $(u,v)$ be an arc of $\Gamma$. Denote $G:=\operatorname{Aut}(\Gamma)$. By  the normality of  the
right regular representation $R(T)$  in
$G$ and the evenness of  $n$, Lemma \ref{lem:even-2}
yields
\( \Gamma^+(u)\cap \Gamma^+(v)=\varnothing.
\)
Consequently,   $\Gamma^+(v)\subseteq \Gamma_2^+(u)$, which implies the cardinality inequality
\[
|\Gamma_2^+(u)|\ge |\Gamma^+(v)|.
\]

As  $\Gamma$ is $2$-distance-transitive, the vertex stabilizer $G_u$ acts transitively on
$\Gamma_2^+(u)$, hence $|\Gamma_2^+(u)|$ divides $|G_u|$. By the vertex-transitivity of $\Gamma$, we have
\( |\Gamma^+(u)|=|\Gamma^+(v)|.
\)
Furthermore,   Lemma \ref{lem:main-1} gives
\( |G_u|=|S|=|\Gamma^+(u)|.
\)
Combining these qualities and inequalities, we obtain
\[
|\Gamma_2^+(u)|\ge |\Gamma^+(v)|=|S|=|G_u|.
\]
Since $|\Gamma_2^+(u)|$ divides $|G_u|$ and both quantities are positive integers, we deduce
\[
|\Gamma_2^+(u)|=|G_u|=|S|.
\]
Thus $\Gamma^+(v)$ and $\Gamma_2^+(u)$ have the same cardinality.
Since $\Gamma^+(v)\subseteq \Gamma_2^+(u)$, we obtain
\[
\Gamma^+(v)=\Gamma_2^+(u).
\]
By Lemma \ref{prop-1=2},  $\Gamma$ is either isomorphic to  $\overrightarrow{C_n}$
or to the digraph
\( \Gamma\cong C_r(m,1)
\)
for some integers $m\ge 2$ and $r\ge 3$. However, Lemma \ref{crv-not-circdig-1} confirms that  $\Gamma\cong C_r(m,1)$ is not a normal circulant digraph. This eliminates the latter case, so   $\Gamma$ is isomorphic to  $\overrightarrow{C_n}$, completing the proof.
\end{proof}

\bigskip

\bigskip

\section{ Normal $2$-distance-transitive  circulant digraphs of odd order}

In this section, we will determine  all 2-distance-transitive  normal circulant digraphs of odd order.

\begin{lemma}\label{lem:interval-stabilizer}
Let \(p>3\) be a prime, and let \(I\subseteq \mathbb F_p\) be a proper consecutive interval containing \(0\), with \(|I|\ge 3\). If \(c\in \mathbb F_p^\times\) satisfies
\( cI=I, \)
then \(c=\pm 1\).
\end{lemma}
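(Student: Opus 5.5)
The plan is to count differences. Write $k=|I|$, so $3\le k\le p-1$, and for $d\in\mathbb F_p^\times$ let
\[
N_I(d)=\#\{(x,y)\in I\times I : y-x=d\}.
\]
Suppose $cI=I$. Multiplication by $c$ maps $I\times I$ bijectively onto itself and sends a pair with difference $d$ to a pair with difference $cd$. Hence $N_I(cd)=N_I(d)$ for every $d$. In particular
\[
N_I(c)=N_I(1)=k-1,
\]
since $I$ is a proper interval and so contains exactly $k-1$ consecutive pairs $(x,x+1)$. The key point is that, because $\pm$ is the only symmetry of the difference function of an interval, $N_I(c)=k-1$ should force $c=\pm1$.

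Next I will compute $N_I(d)$ explicitly. Translate $I$ to $\{0,1,\dots,k-1\}$, which does not change $N_I$. Represent $d$ by an integer $e=\pm d$ with $1\le e\le (p-1)/2$; since $N_I(-d)=N_I(d)$, it suffices to work with $e$. A pair $(x,x+e)$ lies in $I\times I$ either without wrapping, giving $\max(0,k-e)$ pairs, or by wrapping around modulo $p$, i.e.\ as a pair $(x,x+e-p)$, giving $\max(0,k-(p-e))$ pairs. So
\[
N_I(d)=\max(0,k-e)+\max(0,k-p+e).
\]

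The main step is a case analysis of the equation $N_I(d)=k-1$.
\begin{enumerate}[{\rm (a)}]
\item If $e\ge k$ and $p-e\ge k$, then $N_I(d)=0<k-1$, which is impossible.
\item If $e\ge k$ and $p-e<k$, then $k-p+e=k-1$ gives $e=p-1>(p-1)/2$, which contradicts the choice of $e$.
\item If $e<k$ and $p-e\ge k$, then $k-e=k-1$, so $e=1$ and $c=\pm1$.
\item If $e<k$ and $p-e<k$, then $2k-p=k-1$, so $k=p-1$.
\end{enumerate}

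The only case that needs an extra argument is $k=p-1$. Then $\mathbb F_p\setminus I=\{j\}$ for some $j\neq0$ (because $0\in I$). Since $c$ permutes $\mathbb F_p$ and preserves $I$, it fixes the complement, so $cj=j$ and therefore $c=1$. In every case $c=\pm1$.

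I expect the only delicate step to be the wrap-around count in the formula for $N_I(d)$, together with checking the boundary cases where $k$ is close to $p$. The hypotheses $|I|\ge3$ and $p>3$ are used only to exclude degenerate situations: they guarantee $k-1\ge2>0$, so case (a) is genuinely contradictory, and they make the conclusion non-vacuous.
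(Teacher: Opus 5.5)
Your proof is correct, and it takes a genuinely different route from the paper's.

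The paper argues with power sums. It writes $I=\{-r,\dots,s\}$. The identity $(c-1)\sum_{x\in I}x=0$ forces $s=r$ whenever $c\neq 1$. Then $(c^2-1)\sum_{x\in I}x^2=0$, together with $\sum_{x\in I}x^2=r(r+1)(2r+1)/3\neq 0$ in $\mathbb F_p$, forces $c^2=1$. This is a very short computation, but it relies on cancelling nonzero factors in the field and on inverting $3$; that is where the paper uses $p>3$.

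You instead use the difference-multiplicity function $N_I$, which satisfies $N_I(cd)=N_I(d)$. Your wrap-around formula $N_I(d)=\max(0,k-e)+\max(0,k-p+e)$ then shows that the value $N_I(1)=k-1$ is attained only at $d=\pm 1$, unless $k=p-1$. You settle that boundary case through the missing point $j\neq 0$. The four cases are exhaustive, the formula is right (the only integer differences congruent to $e$ are $e$ and $e-p$), and each case is handled correctly.

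What your route buys is transparency about the hypotheses:
\begin{itemize}
\item $p>3$ is automatic from $3\le |I|\le p-1$.
\item $|I|\ge 2$ is all that case (a) needs.
\item $0\in I$ is used only when $|I|=p-1$, where $I=\mathbb F_p^\times$ would otherwise be a genuine counterexample. For $2\le |I|\le p-2$ your argument gives $c=\pm 1$ whether or not $0\in I$.
\end{itemize}
The paper's route, in exchange, is a two-line calculation with no case split.
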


\begin{proof}
Since \(I\) is a proper consecutive interval containing \(0\), we may write
\(
I=\{-r,-r+1,\dots,s\}
\)
for some integers \(r,s\ge 0\), with
\(
r+s+1=|I|\le p-1.
\)
If \(c=1\), there is nothing to prove. Assume \(c\neq 1\).

From \(cI=I\), we have
\[
\sum_{x\in I} cx=\sum_{x\in I}x.
\]
Hence
\[
(c-1)\sum_{x\in I}x=0,
\]
so
\[
\sum_{x\in I}x=0.
\]
But
\[
\sum_{x=-r}^{s}x=\frac{(s-r)(r+s+1)}{2}.
\]
Since \(p\) is odd and \(1\le r+s+1=|I|\le p-1\), the factor \(r+s+1\) is nonzero modulo \(p\). Therefore
\(
s-r\equiv 0\pmod p.
\)
Because \(|s-r|<p\), we get \(s=r\). Thus
\[
I=\{-r,-r+1,\dots,r\}.
\]

Now compare sums of squares. Since \(cI=I\),
\[
\sum_{x\in I}(cx)^2=\sum_{x\in I}x^2.
\]
Thus
\(
(c^2-1)\sum_{x\in I}x^2=0.
\)
If \(c^2\neq 1\), then necessarily
\(
\sum_{x\in I}x^2=0.
\)
But
\[
\sum_{x=-r}^{r}x^2
=
2\sum_{i=1}^{r}i^2
=
\frac{r(r+1)(2r+1)}{3}.
\]
Since \(|I|=2r+1\le p-1\), we have \(r\ge 1\) and \(2r+1<p\). Hence none of \(r\), \(r+1\), \(2r+1\) is divisible by \(p\). As \(p>3\), the factor \(3\) is invertible modulo \(p\), so
\(
\sum_{x\in I}x^2\neq 0
\)
in \(\mathbb F_p\). Therefore \(c^2=1\). Since \(c\neq 1\), we conclude \(c=-1\).
\end{proof}

\begin{lemma}\label{lem:two-cosets}
Let \(p\) be an odd prime, and let \(H\le \mathbb F_p^\times\) be a subgroup of odd order with \(2\in H\). If
\(
H+H=H\cup dH
\)
for some \(d\notin H\), then
\(
|H|=\frac{p-1}{2}.
\)
\end{lemma}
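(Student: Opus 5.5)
The plan is to show that $M:=\{0\}\cup H\cup dH$ is closed under addition. A nonempty additively closed subset of $\mathbb F_p$ containing a nonzero element is all of $\mathbb F_p$, since it contains every multiple of that element. So closure would give $\mathbb F_p^\times=H\cup dH$, a disjoint union of two $H$-cosets, and hence $|H|=\frac{p-1}{2}$.

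\emph{Preliminary observations.} Since $|H|$ is odd, $-1\notin H$, so $0\notin H+H$. The hypothesis $H+H=H\cup dH$ is invariant under multiplication by $H$, so it is equivalent to the pointwise statement
\[
1+h\in H\cup dH \qquad\text{for all } h\in H .
\]
The element $2=1+1$ lies in $H$. Moreover, $dH$ really occurs in $H+H$.

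\emph{Counting step.} For $x\in\mathbb F_p^\times$ let $N(x)=\#\{(a,b)\in H^2: a+b=x\}$; this is constant on $H$-cosets. Summing over all pairs gives $|H|\,(N(1)+N(d))=|H|^2$. Put $\alpha=\#\{h\in H:1+h\in H\}$. Then $N(1)=\alpha$ and $N(d)=|H|-\alpha\ge 1$, and comparing $N(1)$ with these numbers relates the ``difference'' data $1-h$ to the ``sum'' data $1+h$.

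\emph{Closure of $M$.} The sums to control are those of the following forms:
\[
H+H,\qquad H+dH,\qquad dH+dH .
\]
The first is the hypothesis. For the third, $dH+dH=d(H+H)=dH\cup d^2H$, so it lies in $M$ once $d^2H\in\{H,dH\}$. For the second, the task is to show $1+dh\in M$ for all $h\in H$. I intend to obtain this from the associativity of triple sums:
\[
(H+H)+H=H+(H+H) .
\]
Expanding both sides, with $2\in H$ used to absorb terms like $h+h$, expresses $H+dH$ and $d^2H$ through sets already known to lie in $M$. This is where oddness of $|H|$ should enter, since it keeps $0$ out of the relevant sums and prevents $-H$ from appearing. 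In the same spirit, the counting identity $N(1)+N(d)=|H|$ should be compared with the multiplicity with which each coset appears in $H+H+H$.

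\emph{Expected obstacle.} The main difficulty is precisely showing $d^2H\subseteq H\cup dH$ and $H+dH\subseteq M$. A priori $H+dH$ might meet a third coset, or contain $0$ if $-d\in H$. If the direct associativity argument stalls, I would instead pass to the quotient $\mathbb F_p^\times/H$ and study the action of adding $1$ on cosets. Alternatively, I would use Gauss periods $\eta_j=\sum_{h\in H}\zeta^{d^jh}$. The identity $\eta_0^2=N(1)\eta_0+N(d)\eta_1$, with no constant term because $-1\notin H$, together with $\sum_j\eta_j=-1$, should force the number of cosets to be $2$.
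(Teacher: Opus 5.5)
\textbf{There is a genuine gap.} Your reduction is correct but circular. The set $M=\{0\}\cup H\cup dH$ is additively closed if and only if $M=\mathbb F_p$, that is, if and only if $|H|=\frac{p-1}{2}$. So the entire content of the lemma sits in the two claims $1+dH\subseteq M$ and $d^2\in H$, and the tool you propose for them produces nothing.

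Set addition is associative and commutative for arbitrary sets, so expanding a multiple sum in two ways only yields identities that hold automatically. Using $H+H=H\cup dH$, both $(H+H)+H$ and $H+(H+H)$ equal $H\cup dH\cup(H+dH)$. The four-fold sums likewise both reduce to $H\cup dH\cup(H+dH)\cup d^2H$. No constraint on $H+dH$ or on $d^2H$ can come out of this, with or without $2\in H$.

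The fallbacks are not arguments either. Since $\zeta,\dots,\zeta^{p-1}$ are linearly independent over $\mathbb Q$, the relation $\eta_0^2=N(1)\eta_0+N(d)\eta_{[d]}$ is just the hypothesis rewritten in that basis. Its Galois conjugates are the same statement for the other cosets $cH$, and $\sum_j\eta_j=-1$ holds for every $H$. Indexing the periods by powers of $d$ also presupposes that $dH$ generates $\mathbb F_p^\times/H$, which is part of what must be shown. Likewise, $N(1)+N(d)=|H|$ follows directly from the hypothesis and adds no information.

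What is missing is a genuine inverse-sumset input. Your own preliminary observations ($0\notin H+H$, $H\subseteq H+H$, $H\cap dH=\varnothing$) already show that $A=H\cup\{0\}$ satisfies $|A+A|=|H\cup dH\cup\{0\}|=2|A|-1$. This is the equality case of Cauchy--Davenport, and it is exactly what the paper exploits:
\begin{enumerate}
\item If $|H|<\frac{p-1}{2}$, then $|H|\le\frac{p-3}{2}$, so $|A+A|\le p-2$.
\item Vosper's theorem then makes $A$ an arithmetic progression $\lambda I$, with $I$ an interval containing $0$ and $|I|\ge 4$.
\item Since $hA=A$ for all $h\in H$, one gets $hI=I$. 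Comparing $\sum_{x\in I}x$ and $\sum_{x\in I}x^2$ (Lemma~\ref{lem:interval-stabilizer}) forces $h=\pm1$.
\item As $-1\notin H$, this gives $H=\{1\}$, contradicting $2\in H$.
\end{enumerate}
If you want to avoid Vosper, the polynomial argument in the proof of Proposition~\ref{circulant-p-1} is a workable substitute. It reduces $\bigl((x+1)^m-1\bigr)\bigl((x+1)^m-\lambda\bigr)$ modulo $x^m-1$ and compares coefficients. Some step of this kind is needed: your outline as written never uses the additive structure of $\mathbb F_p$ beyond restating the hypothesis.
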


\begin{proof}
The assumptions force \(p>3\), since \(\mathbb F_3^\times\) has order \(2\) and its only odd-order subgroup is trivial, so it cannot contain \(2\).

Since \(H\) has odd order, \(-1\notin H\). Thus \(0\notin H+H\). From
\(
H+H=H\cup dH
\)
and \(d\notin H\), the union is disjoint, so
\[
|H+H|=|H|+|dH|=2|H|.
\]

Put
\(
A=H\cup\{0\}.
\)
Then
\[
A+A=(H+H)\cup H\cup\{0\}
=H\cup dH\cup\{0\},
\]
and the three sets \(H\), \(dH\), and \(\{0\}\) are pairwise disjoint. Hence
\[
|A+A|=2|H|+1=2|A|-1.
\]

Suppose, for contradiction, that
\(
|H|<\frac{p-1}{2}.
\)
Then, since \(|H|\) is an integer and \(p\) is odd,
\(
|H|\le \frac{p-3}{2},
\)
so
\(
|A+A|=2|H|+1\le p-2<p.
\)
Thus \(A\) attains the Cauchy--Davenport lower bound with \(|A+A|<p\). By Vosper's theorem in \cite{Vosper-1956}, \(A\) is an arithmetic progression in \(\mathbb F_p\).

Since \(0\in A\), we may write
\(
A=\lambda I
\)
for some \(\lambda\in \mathbb F_p^\times\) and some consecutive interval \(I\subseteq \mathbb F_p\) containing \(0\). Also,
\(
|I|=|A|=|H|+1\ge 4,
\)
because \(H\) is an odd-order subgroup containing \(2\), so \(|H|\ge 3\).

For every \(h\in H\), multiplication by \(h\) leaves \(H\) invariant, so
\(
hA=h(H\cup\{0\})=hH\cup\{0\}=H\cup\{0\}=A.
\)
Since \(A=\lambda I\) and multiplication in \(\mathbb F_p\) is commutative,
\(
h(\lambda I)=\lambda I
\)
implies
\(
hI=I.
\)
By Lemma~\ref{lem:interval-stabilizer}, this forces \(h=\pm 1\). But \(-1\notin H\), so \(h=1\) for every \(h\in H\). Hence \(H=\{1\}\), contradicting \(2\in H\).

Therefore
\[
|H|\ge \frac{p-1}{2}.
\]
Since \(H\) is a proper subgroup of \(\mathbb F_p^\times\), we also have
\[
|H|\le \frac{p-1}{2}.
\]
Thus
\(
|H|=\frac{p-1}{2}.
\)
\end{proof}

\begin{lemma}\label{lem:projection-classification}
Let \(p\) be an odd prime, and let \(H\le \mathbb{F}_p^\times\) be a subgroup with \(2\in H\).
If \(H+H\) is the union of exactly two distinct \(H\)-orbits under multiplication, then either
\(
H=\mathbb{F}_p^\times,
\)
or
\(
H+H=H\cup aH
\)
for some \(a\notin H\), and \(|H|=(p-1)/2\).
\end{lemma}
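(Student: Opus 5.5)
The plan is to exploit that \(H+H\) is invariant under multiplication by \(H\). Its \(H\)-orbits in \(\mathbb F_p\) are therefore \(\{0\}\) and the multiplicative cosets \(xH\), \(x\in\mathbb F_p^\times\). First I locate one of the two orbits explicitly: since \(1\in H\), we have \(2=1+1\in H+H\), and \(2\in H\) by hypothesis. So the orbit \(H\) itself lies in \(H+H\). Hence \(H+H=H\cup O\), where \(O\) is a second orbit distinct from \(H\). I then split according to whether \(O=\{0\}\) or \(O=aH\) for some \(a\notin H\).

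\emph{Case \(O=\{0\}\).} Put \(A=H\cup\{0\}\). Then \(A+A=(H+H)\cup H\cup\{0\}=A\), so \(A\) is closed under addition. A nonempty subset of a finite group closed under addition is a subgroup, so \(A\) is an additive subgroup of \(\mathbb F_p\). It contains \(1\), hence \(A=\mathbb F_p\), and so \(H=\mathbb F_p^\times\). This is the first alternative. (For \(p=3\) this is consistent: \(\mathbb F_3^\times+\mathbb F_3^\times=\mathbb F_3\).)

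\emph{Case \(O=aH\) with \(a\notin H\).} Here \(0\notin H+H\), since otherwise \(\{0\}\) would be a third orbit. Now \(0\in H+H\) if and only if \(-1\in H\), so \(-1\notin H\). Since \(\mathbb F_p^\times\) is cyclic, \(-1\) is its unique involution, and a subgroup contains \(-1\) exactly when its order is even. Therefore \(|H|\) is odd. We now have an odd-order subgroup containing \(2\) with \(H+H=H\cup aH\) and \(a\notin H\), which is precisely the hypothesis of Lemma~\ref{lem:two-cosets}. That lemma gives \(|H|=(p-1)/2\), which is the second alternative.

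I expect no serious obstacle, because the additive-combinatorial work (Vosper's theorem together with Lemma~\ref{lem:interval-stabilizer}) is already packaged in Lemma~\ref{lem:two-cosets}. The only points needing care are the following:
\begin{enumerate}[(i)]
\item the orbit structure of \(H\) acting on \(\mathbb F_p\) by multiplication;
\item the observation that \(2\in H\) forces \(H\subseteq H+H\);
\item the parity argument converting \(-1\notin H\) into \(|H|\) odd.
\end{enumerate}
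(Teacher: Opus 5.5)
Your proof is correct and follows the paper's argument. You show \(H\subseteq H+H\) via \(2=1+1\), split according to whether the second orbit is \(\{0\}\) or a coset \(aH\), and in the latter case deduce \(-1\notin H\), hence \(|H|\) odd, and apply Lemma~\ref{lem:two-cosets}. The only difference is in the case \(O=\{0\}\): you observe directly that \(H\cup\{0\}\) is additively closed and hence equals \(\mathbb{F}_p\), which is cleaner than the paper's Cauchy--Davenport estimate with its separate disposal of \(|H|=2\), \(p=3\).
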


\begin{proof}
Since \(2\in H\), we have \(1+1=2\in H+H\). Because \(H+H\) is invariant under multiplication by \(H\), the whole orbit \(H\) is contained in \(H+H\). Thus
\[
H+H=H\cup O
\]
for another \(H\)-orbit \(O\).

If \(O=\{0\}\), then \(H+H=H\cup\{0\}\). If \(H\neq \mathbb{F}_p^\times\), then by Cauchy--Davenport,
\(
|H+H|\ge \min\{p,2|H|-1\}.
\)
But \(|H+H|=|H|+1\). If \(|H|+1<p\), then
\(
|H|+1\ge 2|H|-1,
\)
so \(|H|\le 2\). Since \(2\in H\) and \(2\neq 1\), we must have \(|H|=2\). Then \(H=\{1,2\}\), but the only subgroup of order \(2\) is \(\{1,-1\}\), so \(p=3\) and \(H=\mathbb{F}_p^\times\), contrary to assumption. Therefore \(|H|+1=p\), so \(|H|=p-1\), and again \(H=\mathbb{F}_p^\times\). Hence in this case \(H=\mathbb{F}_p^\times\).

If \(O\neq\{0\}\), then \(O=aH\) for some \(a\in \mathbb{F}_p^\times\setminus H\). Hence
\[
H+H=H\cup aH.
\]
In particular \(0\notin H+H\), so \(-1\notin H\), and therefore \(|H|\) is odd. By Lemma~\ref{lem:two-cosets},
\(
|H|=\frac{p-1}{2}.
\)
\end{proof}

\begin{lemma}\label{2notin-s}
Let \(p,q\) be distinct odd primes. There does not exist a subset \(S\subseteq \mathbb{Z}_{pq}\) such that:
\begin{itemize}
    \item \(-S\cap S=\varnothing\) and \(\langle S\rangle=\mathbb{Z}_{pq}\);
        \item \(1,2\in S\);
    \item \(\Gamma=\operatorname{Cay}(\mathbb{Z}_{pq},S)\) is a normal circulant digraph;
    \item \(\Gamma\) is \(2\)-distance-transitive with valency at least \(2\).

\end{itemize}
\end{lemma}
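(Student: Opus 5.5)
We argue by contradiction. Suppose such an $S$ exists and set $n=pq$, $G=\Aut(\Gamma)$, $u=0$. Since $\Gamma$ is normal and $1\in S$, Lemma~\ref{lem:main-1} makes $S=:H$ a multiplicative subgroup of $\mathbb Z_{pq}^\times$, with $G_u=H$ acting on $T$ by multiplication. The condition $S\cap -S=\varnothing$ gives $-1\notin H$, hence $0\notin H+H$. Since $2=1+1\in H$ and $H+H$ is $H$-invariant, we have
\[
H+H=H\cup\Gamma_2^+(u).
\]
By $2$-distance-transitivity, $\Gamma_2^+(u)$ is a single $H$-orbit $O$, nonempty because $\Gamma$ is not complete when $S\cap -S=\varnothing$ (and if $O$ were empty, Cauchy--Davenport/Kneser would already give a contradiction). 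By Lemma~\ref{lem:main-1}(2), all elements of $O$ have the same order. So $O$ lies either in $\mathbb Z_{pq}^\times$, or in $p\mathbb Z_{pq}\setminus\{0\}$, or in $q\mathbb Z_{pq}\setminus\{0\}$.

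\emph{Local step.} Reduce modulo $p$ via $\mathbb Z_{pq}\cong \mathbb F_p\times\mathbb F_q$. Let $H_p$ be the image of $H$ in $\mathbb F_p^\times$; it is a subgroup containing $2$, and
\[
H_p+H_p=H_p\cup O_p,
\]
where $O_p$ is the image of $O$, a single $H_p$-orbit (possibly $\{0\}$). The case $H_p+H_p=H_p$ is impossible by Cauchy--Davenport. Hence Lemma~\ref{lem:projection-classification} applies and gives a dichotomy: either $H_p=\mathbb F_p^\times$ and $O_p=\{0\}$, or $|H_p|=(p-1)/2$ is odd and $H_p+H_p=H_p\cup a H_p$. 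The same dichotomy holds at $q$.

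\emph{Global step.} This splits into cases.
\begin{enumerate}[(a)]
\item Both primes are in the ``full'' case. Then $O$ would reduce to $0$ modulo both $p$ and $q$, forcing $O=\{0\}$, which is impossible.
\item $H_p=\mathbb F_p^\times$ but $|H_q|=(q-1)/2$ (or symmetrically). Then $O\subseteq p\mathbb Z_{pq}$, so every sum $h_1+h_2$ that is nonzero modulo $p$ lies in $H$. Choose $h\in H$ with $h\equiv -1\pmod p$, which exists since $H_p$ is full, and compare with elements $h'\in H$ whose $p$-component is not $-1$. From this, derive that $H_q$ is closed under $x\mapsto x+c$ for suitable $c$, i.e.\ $H_q+H_q\subseteq H_q\cup\{0\}$ on a large set. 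This contradicts $H_q+H_q=H_q\cup aH_q$ with $0\notin H_q+H_q$.
\item Both primes are in the half case. Then $O\subseteq\mathbb Z_{pq}^\times$, so $H$ acts regularly on $O$ and $|H+H|=2|H|$. I would compare this with Kneser's theorem in $\mathbb Z_{pq}$:
\[
|H+H|\ge 2|H+K|-|K|,
\]
where $K$ is the stabilizer of $H+H$. Since $0\notin H+H$, $K$ is a proper subgroup. If $K=0$, we get equality in Kneser; the Kemperman-type structure then makes $H$ an arithmetic progression, which is incompatible with its multiplicative invariance, as in Lemma~\ref{lem:interval-stabilizer}. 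If $K$ is the order-$p$ or order-$q$ subgroup, then $H+H$ is a union of cosets of $K$, so its reduction modulo $q$ (resp.\ $p$) has size $|H+H|/|K|$. Combining this with $|H_q+H_q|=2|H_q|$ and $|H|\le|H_p||H_q|$ should force $|H_p|$ or $|H_q|$ to be too large.
\end{enumerate}

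I expect case (c) to be the main obstacle, in particular handling a subgroup $H$ that projects onto both $H_p$ and $H_q$ without being their direct product. If the Kneser route does not close, my first fallback is an elementwise argument: take $h\in H$ with $h_p=1$ and $h_q\neq1$ (if $H$ is not a ``diagonal'' subgroup). Then $(1+h)_p=2\in H_p$, so $1+h\notin O$, hence $1+h\in H$. This puts $1+h_q$ in $H_q$ for all such $h$, which contradicts $H_q+H_q=H_q\cup aH_q$ via a counting argument. The diagonal case $H\cong H_p\cong H_q$ would then be treated separately by the same sumset counting.
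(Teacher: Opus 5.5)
Your outline matches the paper's: reduce to $H=S\le\mathbb Z_{pq}^\times$, apply Lemma~\ref{lem:projection-classification} at $p$ and at $q$, then split into three global cases. Your case (a) is the paper's Case~1. The gap is that the two substantive cases, (b) and (c), are not proved, and the main route you give for (c) does not work.

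\textbf{Case (c).} With $\mathrm{Stab}(H+H)$ trivial, Kneser only gives $|H+H|\ge 2|H|-1$, so $|H+H|=2|H|$ is not the equality case. You could pass to $A=H\cup\{0\}$, which does satisfy $|A+A|=2|A|-1$. But you are in $\mathbb Z_{pq}$, not a group of prime order, so Vosper does not apply, and Kemperman's theorem allows critical sets that are not progressions. For example, $A=\{0,1,5,10\}\subseteq\mathbb Z_{15}$ has $|A+A|=7$. Hence Lemma~\ref{lem:interval-stabilizer} is unavailable. The nontrivial-stabilizer subcase is vacuous anyway: $H+H$ consists of units, while every coset of a nontrivial subgroup of $\mathbb Z_{pq}$ contains a non-unit.

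Your fallback throws away exactly the information needed. From $1+h=(2,1+h_q)\in H$ you keep only $1+h_q\in H_q$. But $1+N_q\subseteq H_q$, where $N_q=\{y:(1,y)\in H\}$, is not contradictory by counting when $N_q$ is small. Here $H_q$ is the set of nonzero squares, and $(q-3)/4$ elements $w\in H_q$ satisfy $1+w\in H_q$. Use instead that $1+h_q$ lies in the fibre of $H$ over $2$, which is the coset $2N_q$. Then $1+N_q=2N_q$, so $N_q+N_q=2N_q$, and Cauchy--Davenport gives $|N_q|=1$. Symmetrically $|N_p|=1$, so $H$ is the graph of an isomorphism $H_p\to H_q$. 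Your ``diagonal case'' then ends at once, since $(p-1)/2=(q-1)/2$ forces $p=q$. This Goursat-fibre argument is the paper's Case~3; the paper uses the fibre over $1$ and a decomposition $x_1+x_2=1$.

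\textbf{Case (b).} The sentence ``$H_q$ is closed under $x\mapsto x+c$ on a large set, contradicting $H_q+H_q=H_q\cup aH_q$'' is not yet an argument. A translate $c+Z\subseteq H_q$ of a large $Z\subseteq H_q$ contradicts nothing unless you compute how many $w\in H_q$ have $1+w\in H_q$ and also control $|N_q|$. Also, for $h_p=-1$ the sum $h+h'$ is nonzero mod $p$ iff $h'_p\neq 1$, not $h'_p\neq -1$.

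The paper closes this case with the same fibre idea. Since $O_p=\{0\}$, every sum with nonzero $p$-component lies in $H$, so $Y_{x/2}+Y_{x/2}\subseteq Y_x$ for the Goursat fibres $Y_x$. Cauchy--Davenport then gives $|N_q|=1$. So $H=\{(x,\phi(x))\}$ for a homomorphism $\phi:\mathbb F_p^\times\to H_q$ that is additive whenever $x_1+x_2\neq 0$. We have $\phi(1)=1$ and $\phi(2)=2$, and $\phi(-1)=1$ because $|H_q|$ is odd. The relation $-1+2=1$ then yields $1+2\equiv 1\pmod q$, which is impossible.

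Without this fibre-structure step, cases (b) and (c) of your proposal remain unproved.
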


\begin{proof}
Suppose such a digraph \(\Gamma\) exists. Let \(K=\operatorname{Aut}(\Gamma)_0\) be the stabilizer of the vertex \(0\). Since \(\Gamma\) is  normal, by Lemma \ref{cayley-normal},
\(
\Aut(\Gamma)=R(T)\rtimes K,\)
where
\(
K=\Aut(T,S)=\{\varphi\in\Aut(T):\varphi(S)=S\}.
\)
Because \(\Gamma\) is arc-transitive and  \(1\in S\), it follows from   Lemma \ref{lem:main-1} that
\(
S=K\cdot 1=K
\)
 is a multiplicative subgroup of \(\mathbb{Z}_{pq}^\times\). Moreover, \(2\in K\) and \(|K|\ge 2\).
The condition \(-S\cap S=\varnothing\) gives \(-1\notin K\), and so   \(0\notin K+K\).

Since \(\Gamma\) is \(2\)-distance-transitive, \(K\) acts transitively on \(\Gamma_2^+(0)\). Also,
\(
\Gamma_2^+(0)=(K+K)\setminus K.
\)
Thus there is a single \(K\)-orbit \(R\) such that
\(
K+K=K\cup R,
\)
and \(R\cap K=\varnothing\).

Let
\[
H_p=\pi_p(K)\le \mathbb{F}_p^\times,\qquad H_q=\pi_q(K)\le \mathbb{F}_q^\times,
\]
where \(\pi_p,\pi_q\) are the reductions modulo \(p\) and \(q\), respectively. Projecting the equality \(K+K=K\cup R\) gives
\[
H_p+H_p=H_p\cup \pi_p(R),\qquad H_q+H_q=H_q\cup \pi_q(R).
\]
Since \(R\) is a \(K\)-orbit, \(\pi_p(R)\) is an \(H_p\)-orbit and \(\pi_q(R)\) is an \(H_q\)-orbit.

We claim that \(\pi_p(R)\ne H_p\) and \(\pi_q(R)\ne H_q\). Suppose, for instance, that \(\pi_p(R)=H_p\). Then
\(
H_p+H_p=H_p.
\)
As \(H_p\cup\{0\}\) is closed under addition and multiplication, contains \(0\) and \(1\), and every nonzero element has a multiplicative inverse in \(H_p\), it is a subfield of \(\mathbb{F}_p\). Hence \(H_p\cup\{0\}=\mathbb{F}_p\), so \(H_p=\mathbb{F}_p^\times\). But then
\[
H_p+H_p=\mathbb{F}_p^\times+\mathbb{F}_p^\times=\mathbb{F}_p\ne \mathbb{F}_p^\times=H_p,
\]
a contradiction. Therefore \(\pi_p(R)\ne H_p\). The same argument gives \(\pi_q(R)\ne H_q\).

Thus both \(H_p+H_p\) and \(H_q+H_q\) are unions of exactly two distinct \(H_p\)-orbits, respectively two distinct \(H_q\)-orbits. Since \(2\in H_p,H_q\), Lemma~\ref{lem:projection-classification} applies.

For each of \(H_p,H_q\), there are two possibilities: either it is the full multiplicative group, or it has index \(2\). We consider the three cases.

\medskip
\noindent\textbf{Case 1: \(H_p=\mathbb{F}_p^\times\) and \(H_q=\mathbb{F}_q^\times\).}
Then
\[
H_p+H_p=\mathbb{F}_p=H_p\cup\{0\},\qquad H_q+H_q=\mathbb{F}_q=H_q\cup\{0\}.
\]
From
\(
H_p+H_p=H_p\cup\pi_p(R)
\)
we get \(\pi_p(R)=\{0\}\). Similarly \(\pi_q(R)=\{0\}\). Thus every element of \(R\) is divisible by both \(p\) and \(q\), hence is \(0\) in \(\mathbb{Z}_{pq}\). But \(0\notin R\), a contradiction.

\medskip
\noindent\textbf{Case 2: one projection is full and the other has index \(2\).}
Without loss of generality, assume
\[
H_p=\mathbb{F}_p^\times,\qquad |H_q|=\frac{q-1}{2}.
\]
Then \(H_q\) has odd order, and \(H_q+H_q=H_q\cup a_qH_q\) for some \(a_q\notin H_q\). Since
\(
H_p+H_p=\mathbb{F}_p=H_p\cup\{0\},
\)
we must have \(\pi_p(R)=\{0\}\). Hence \(R\subseteq p\mathbb{Z}_{pq}\). On the other hand,
\(
H_q+H_q=H_q\cup\pi_q(R),
\)
so \(\pi_q(R)=a_qH_q\). Thus every element of \(R\) has \(p\)-component \(0\) and \(q\)-component in \(a_qH_q\).

Now regard \(K\) as a subgroup of \(H_p\times H_q\) with surjective projections. For \(x\in H_p\), let
\[
Y_x=\{y\in H_q:(x,y)\in K\}.
\]
By Goursat's Lemma (see \cite{Lang-2002}), each \(Y_x\) is a coset of a fixed subgroup \(N_q\le H_q\). Let \(n_q=|N_q|\). Since \(K+K=K\cup R\) and \(\pi_p(R)=\{0\}\), for any \(x_1,x_2\in H_p\) with \(x_1+x_2\in H_p\), we have
\(
Y_{x_1}+Y_{x_2}\subseteq Y_{x_1+x_2}.
\)
In particular, for any \(x\in H_p\),
\(
Y_{x/2}+Y_{x/2}\subseteq Y_x.
\)
If \(n_q\ge 2\), then by Cauchy--Davenport,
\(
|Y_{x/2}+Y_{x/2}|\ge \min\{q,2n_q-1\}.
\)
Since \(n_q\le |H_q|=(q-1)/2\), we have \(2n_q-1\le q-2<q\), so
\[
|Y_{x/2}+Y_{x/2}|\ge 2n_q-1.
\]
But \(|Y_x|=n_q\), hence
\[
n_q\ge 2n_q-1,
\]
so \(n_q\le 1\). Therefore \(n_q=1\). Thus every fiber \(Y_x\) is a singleton,  write \(Y_x=\{\phi(x)\}\).

Since \(K\) is a subgroup, the map \(\phi:H_p\to H_q\) is a multiplicative homomorphism. Moreover, the above inclusion becomes
\(
\phi(x_1)+\phi(x_2)=\phi(x_1+x_2)
\)
whenever \(x_1,x_2,x_1+x_2\in H_p\).

Because \(1,2\in K\), we have
\[
\phi(1)=1,\qquad \phi(2)=2.
\]
Also \(-1\in H_p=\mathbb{F}_p^\times\). Since \(\phi\) is multiplicative,
\(
\phi(-1)^2=\phi((-1)^2)=\phi(1)=1.
\)
As \(H_q\) has odd order, the only solution to \(y^2=1\) in \(H_q\) is \(y=1\). Hence \(\phi(-1)=1\). Now choose \(x_1=-1\), \(x_2=2\). Then \(x_1+x_2=1\in H_p\), so
\(
\phi(-1)+\phi(2)=\phi(1).
\)
Thus
\(
1+2=1\pmod q,
\)
which gives \(q=2\), impossible because \(q\) is an odd prime.

\medskip
\noindent\textbf{Case 3: both \(H_p\) and \(H_q\) have index \(2\).}
By Lemma~\ref{lem:projection-classification},
\[
|H_p|=\frac{p-1}{2},\qquad |H_q|=\frac{q-1}{2},
\]
and
\[
H_p+H_p=H_p\cup a_pH_p,\qquad H_q+H_q=H_q\cup a_qH_q
\]
for some \(a_p\notin H_p\), \(a_q\notin H_q\). Hence
\[
\pi_p(R)=a_pH_p,\qquad \pi_q(R)=a_qH_q.
\]
In particular, every element of \(R\) has nonzero reduction modulo both \(p\) and \(q\), so \(R\subseteq\U\). Since \(R\) is a single \(K\)-orbit, we may choose \(c\in R\subseteq\U\), and then
\(
R=cK.
\)
Therefore
\(
K+K=K\cup cK.
\)

Again view \(K\) as a subgroup of \(H_p\times H_q\) with surjective projections. By Goursat's Lemma, for each \(x\in H_p\), the fiber
\[
Y_x=\{y\in H_q:(x,y)\in K\}
\]
is a coset of a fixed subgroup \(N_q\le H_q\). Let \(n_q=|N_q|\). Since
\(
H_p+H_p=H_p\cup a_pH_p=\mathbb{F}_p^\times,
\)
for \(1\in H_p\) there exist \(x_1,x_2\in H_p\) such that \(x_1+x_2=1\). The fiber of \(K+K\) over \(1\) contains \(Y_{x_1}+Y_{x_2}\). Because \(cK\) has \(p\)-component in \(c_pH_p=a_pH_p\), it does not contribute to the fiber over \(1\in H_p\). Hence the fiber of \(K+K\) over \(1\) is exactly \(Y_1\), which has size \(n_q\). Therefore
\[
|Y_{x_1}+Y_{x_2}|\le n_q.
\]
On the other hand, by Cauchy--Davenport,
\(
|Y_{x_1}+Y_{x_2}|\ge \min\{q,2n_q-1\}.
\)
Since \(n_q\le |H_q|=(q-1)/2\), we have \(2n_q-1\le q-2<q\), so
\[
|Y_{x_1}+Y_{x_2}|\ge 2n_q-1.
\]
Thus
\[
2n_q-1\le n_q,
\]
so \(n_q=1\). By symmetry, the analogous subgroup \(N_p\le H_p\) also has order \(1\).

By Goursat's Lemma (see \cite{Lang-2002}), the quotients satisfy
\(
H_p/N_p\cong H_q/N_q.
\)
Since \(N_p\) and \(N_q\) are trivial, we get
\(
H_p\cong H_q.
\)
Hence
\(
|H_p|=|H_q|,
\)
so
\(
\frac{p-1}{2}=\frac{q-1}{2},
\)
which implies \(p=q\), contradicting the hypothesis that \(p\) and \(q\) are distinct.

\medskip
All three cases lead to contradictions. Therefore no such subset \(S\) exists.
\end{proof}


\begin{lemma}\label{lem:finite-field}
Let \(p\) be an odd prime and let \(H\le \mathbb F_p^\times\) a subgroup of   odd order. If
\(
H+H=H\cup 2H,
\)
then
\(
p\equiv 3\pmod 4\) and \(|H|=\frac{p-1}{2}.
\)
\end{lemma}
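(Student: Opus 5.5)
The plan is to adapt the argument of Lemma~\ref{lem:two-cosets} to the present hypothesis. I will first show that $2\notin H$, so that $H$ and $2H$ are distinct cosets. Then I will apply the Cauchy--Davenport/Vosper argument to $A=H\cup\{0\}$. The congruence $p\equiv 3\pmod 4$ will follow at the end from $|H|$ being odd and equal to $\frac{p-1}{2}$.

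\emph{Step 1: $2\notin H$ and $|H|\ge 3$.} Since $|H|$ is odd, $-1\notin H$, and hence $0\notin H+H$. Suppose $2\in H$. Then $2H=H$, so the hypothesis gives $H+H=H$. Thus $H$ is closed under addition, and since $1\in H$ it contains every multiple $k\cdot 1$ for $k=1,\dots,p$. In particular $0=p\cdot 1\in H+H$, a contradiction. Therefore $2\notin H$, so the cosets $H$ and $2H$ are disjoint and $|H+H|=2|H|$. This also shows $H\ne\mathbb F_p^\times$, so $H$ is a proper subgroup and $|H|\le \frac{p-1}{2}$. Next, if $H=\{1\}$, then $H+H=\{2\}$ has one element while $H\cup 2H=\{1,2\}$ has two, which is impossible. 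Since $|H|$ is odd, this gives $|H|\ge 3$. Consequently $p\ge 7$; in particular $p>3$, which covers the case $p=3$, where the only odd-order subgroup is trivial.

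\emph{Step 2: the Vosper argument.} Put $A=H\cup\{0\}$. Then $A+A=(H+H)\cup H\cup\{0\}=H\cup 2H\cup\{0\}$, and these three pieces are pairwise disjoint. Hence $|A+A|=2|H|+1=2|A|-1$. Suppose, for contradiction, that $|H|<\frac{p-1}{2}$, so $|H|\le\frac{p-3}{2}$. Then $|A+A|\le p-2$ and $|A|\ge 4$, so Vosper's theorem \cite{Vosper-1956} applies and $A$ is an arithmetic progression. Because $0\in A$, we can write $A=\lambda I$, where $I$ is a proper consecutive interval containing $0$ with $|I|\ge 4$.

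\emph{Step 3: contradiction from the stabilizer of $I$.} Every $h\in H$ satisfies $hA=A$, and therefore $hI=I$. Lemma~\ref{lem:interval-stabilizer} (valid since $p>3$) then forces $h=\pm1$. Since $-1\notin H$, this gives $H=\{1\}$, contradicting $|H|\ge 3$. Hence $|H|=\frac{p-1}{2}$, and because $|H|$ is odd, $p\equiv 3\pmod 4$.

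The only delicate point is checking that the hypotheses of Vosper's theorem are met, namely $|A|\ge 2$ and $|A+A|\le p-2$. This is exactly why Step 1 must first rule out $2\in H$ and $|H|=1$.
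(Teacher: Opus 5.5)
Your proposal is correct and follows essentially the same route as the paper: rule out $2\in H$ and $|H|=1$, apply Vosper's theorem to $A=H\cup\{0\}$, and derive a contradiction from the multiplicative stabilizer of the resulting interval. The only difference is cosmetic. The paper passes to the symmetric set $D=I-I$ and proves a sum-of-squares claim inline, whereas you apply Lemma~\ref{lem:interval-stabilizer} directly to $I$; this is equally valid since you have already secured $p>3$ and $|I|\ge 4$.
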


\begin{proof}
First observe that \(2\notin H\). Indeed, if \(2\in H\), then \(2H=H\), as \(H\) is a subgroup of \( \mathbb F_p^\times\), so the equality gives \(H+H=H\), a contradiction. Since
\(1\in H\), this would imply that \(H\) is closed under addition. But then the additive subgroup generated by \(1\), namely \(\mathbb F_p\), would be contained in \(H\cup\{0\}\), contradicting \(H\le \mathbb F_p^\times\). Hence
\[
H\cap 2H=\varnothing.
\]

Let \(n=|H|\). Since \(H\) has odd order, \(-1\notin H\). Thus \(0\notin H+H\): if \(h_1+h_2=0\), then \(h_2=-h_1\), forcing \(-1=h_2h_1^{-1}\in H\), a contradiction. Therefore
\[
H+H\subseteq \mathbb F_p^\times.
\]
From \(H+H=H\cup 2H\) we get
\(
|H+H|=2n,
\)
so \(2n\le p-1\).

If \(n=1\), then \(H=\{1\}\), and
\(
H+H=\{2\}\), \(H\cup 2H=\{1,2\},
\)
a contradiction. Hence \(n\ge 3\).

Set
\(
A=H\cup\{0\}.
\)
Then
\(
A+A=(H+H)\cup H\cup\{0\}
=H\cup 2H\cup\{0\}.
\)
Since \(H\), \(2H\), and \(\{0\}\) are pairwise disjoint,
\(
|A+A|=2n+1.
\)
Also
\(
|A|+|A|-1=(n+1)+(n+1)-1=2n+1.
\)
Thus
\(
|A+A|=|A|+|A|-1.
\)

Suppose for contradiction that
\[
n<\frac{p-1}{2}.
\]
Because \(n\) is odd, this implies
\(
n\le \frac{p-3}{2}.
\)
Then
\(
|A+A|=2n+1\le p-2.
\)
By Vosper's theorem in \cite{Vosper-1956}, \(A\) is an arithmetic progression in \(\mathbb F_p\).

Since \(0\in A\) and \(|A|=n+1\le (p-1)/2\), we may write
\[
A=dI,
\]
where \(d\in\mathbb F_p^\times\) and \(I\) is an interval of consecutive integers of length \(n+1\) containing \(0\). In particular,
\[
I-I=\{-(n),-(n-1),\dots,n-1,n\}.
\]
For every \(h\in H\), the fact \(H\le \mathbb F_p^\times\) gives
\(
hA=h(H\cup\{0\})=H\cup\{0\}=A.
\)
Thus
\(
h(dI)=dI,
\)
so
\(
hI=I.
\)
Consequently,
\(
h(I-I)=I-I.
\)
Equivalently, if
\(
D=\{\overline{k}\in\mathbb F_p: -n\le k\le n\},
\)
then
\(
hD=D
\)
for every \(h\in H\).

We now use the following elementary fact.

\medskip
\noindent\textbf{Claim.}
Let \(n\ge 1\) and suppose \(2n+1<p\). Put
\(
D=\{\overline{k}\in\mathbb F_p: -n\le k\le n\}.
\)
If \(a\in\mathbb F_p^\times\) satisfies \(aD=D\), then \(a=\pm 1\).

Proof:  Since \(p>2n+1\), the residues \(\overline{-n},\overline{-n+1},\dots,\overline{n}\) are pairwise distinct in \(\mathbb F_p\). Thus
\(
D=\{\overline{-n},\overline{-n+1},\dots,\overline{n}\}
\)
has exactly \(2n+1\) elements.

Consider the sum
\(
S=\sum_{x\in D} x^2 \in \mathbb F_p.
\)
Using the integer representatives, we have
\[
S=\sum_{k=-n}^{n} \overline{k}^2
=\sum_{k=-n}^{n} k^2 \pmod p
=2\sum_{k=1}^{n} k^2
=\frac{n(n+1)(2n+1)}{3} \in \mathbb F_p.
\]
Here the fraction means multiplication by the inverse of \(3\) in \(\mathbb F_p\).

We claim that \(S\neq 0\) in \(\mathbb F_p\). Indeed, since \(2n+1<p\), none of the integers \(n\), \(n+1\), or \(2n+1\) is divisible by \(p\). Also \(p\neq 3\), because \(p>2n+1\ge 3\), so \(3\) is invertible modulo \(p\). Therefore
\(
n(n+1)(2n+1)\not\equiv 0 \pmod p,
\)
and hence \(S\neq 0\) in \(\mathbb F_p\).

Now assume \(aD=D\). Then multiplication by \(a\) gives a bijection from \(D\) to itself. Therefore
\[
\sum_{x\in D} (ax)^2 = \sum_{y\in D} y^2 = S.
\]
On the other hand,
\[
\sum_{x\in D} (ax)^2
= a^2 \sum_{x\in D} x^2
= a^2 S.
\]
Thus
\(
a^2 S = S.
\)
Since \(S\neq 0\) in the field \(\mathbb F_p\), we obtain
\(
a^2 = 1.
\)
Hence
\(
(a-1)(a+1)=0,
\)
so \(a=1\) or \(a=-1\). This proves the claim.

\medskip
Returning to the proof of the lemma, we have
\(
2n+1\le p-2<p,
\)
so the claim applies. Therefore every \(h\in H\) satisfies
\(
h=\pm 1.
\)
Since \(H\) has odd order, \(-1\notin H\). Hence \(h=1\) for every \(h\in H\), forcing \(H=\{1\}\), contradicting \(n\ge 3\).

Therefore the assumption \(n<(p-1)/2\) is impossible. Hence
\(
|H|=n=\frac{p-1}{2}.
\)
Since \(|H|\) is odd, this forces
\[
\frac{p-1}{2}\equiv 1\pmod 2,
\]
so
\[
p\equiv 3\pmod 4.
\]
This completes the proof.
\end{proof}

\begin{lemma}\label{2notindistance2}
Let \(p,q\) be distinct odd primes and let \(T=\mathbb Z_{pq}\). There does not exist a subset \(S\subseteq T\) such that
\begin{itemize}
    \item \(-S\cap S=\varnothing\) and \(\langle S\rangle=T\);
    \item \(\Gamma=\operatorname{Cay}(T,S)\) is a normal circulant digraph;
    \item \(\Gamma\) is \(2\)-distance-transitive and has valency at least \(2\);
    \item \(1\in S\) and \(2\in\Gamma_2^+(0)\);
    \item \(|S|\) is odd.
\end{itemize}
\end{lemma}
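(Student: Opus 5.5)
The plan is to reduce to the two prime-field projections and argue as in Case~3 of Lemma~\ref{2notin-s}, with Lemma~\ref{lem:finite-field} replacing Lemma~\ref{lem:projection-classification}. Suppose such an $S$ exists and put $K=\operatorname{Aut}(\Gamma)_0$.

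\textbf{Step 1: the sumset identity $K+K=K\cup 2K$.} By normality and Lemma~\ref{cayley-normal}, $K=\operatorname{Aut}(T,S)$. Since $1\in S$, Lemma~\ref{lem:main-1} gives $S=K\le\mathbb Z_{pq}^\times$, so $|K|=|S|$ is odd. Hence $-1\notin K$, and therefore $0\notin K+K$. By $2$-distance-transitivity, $\Gamma_2^+(0)=(K+K)\setminus K$ is a single $K$-orbit. It contains $2$, so it equals $2K$, and $2K\cap K=\varnothing$. Thus
\[
K+K=K\cup 2K,\qquad 2\notin K .
\]

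\textbf{Step 2: the projections.} Let $H_p=\pi_p(K)\le\mathbb F_p^\times$ and $H_q=\pi_q(K)\le\mathbb F_q^\times$. These are homomorphic images of an odd-order group, so both have odd order. Projecting the identity gives
\[
H_p+H_p=H_p\cup 2H_p,\qquad H_q+H_q=H_q\cup 2H_q .
\]
Lemma~\ref{lem:finite-field} applies directly. It also covers the possibility $2\in H_p$, since its proof rules that out: $H_p\cup\{0\}$ would then be closed under addition, which is impossible. So $|H_p|=\frac{p-1}{2}$ and $|H_q|=\frac{q-1}{2}$, with $p,q\equiv 3\pmod 4$. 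In particular $2H_p$ is the other coset of $H_p$, so $H_p+H_p=\mathbb F_p^\times$, and likewise for $q$.

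\textbf{Step 3: Goursat and Cauchy--Davenport.} View $K\le H_p\times H_q$ with both projections surjective. By Goursat's Lemma, the fibres $Y_x=\{y:(x,y)\in K\}$ are cosets of a fixed subgroup $N_q\le H_q$, of order $n_q$.

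\begin{itemize}
\item Since $1\in H_p+H_p$, we can choose $x_1,x_2\in H_p$ with $x_1+x_2=1$.
\item Elements of $2K$ have $p$-component in $2H_p$, which is disjoint from $H_p$. So the fibre of $K+K$ over $1\in\mathbb F_p$ is exactly $Y_1$, of size $n_q$.
\item This fibre contains $Y_{x_1}+Y_{x_2}$. By Cauchy--Davenport, and since $2n_q-1\le q-2$, we get $|Y_{x_1}+Y_{x_2}|\ge 2n_q-1$.
\end{itemize}

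Combining these, $2n_q-1\le n_q$, so $n_q=1$. Symmetrically $N_p=1$. Goursat's Lemma then gives $H_p\cong H_q$, so $\frac{p-1}{2}=\frac{q-1}{2}$ and $p=q$, a contradiction.

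The main obstacle I expect is Step~1. It requires identifying the distance-$2$ orbit precisely as $2K$, and confirming that $K\cap 2K=\varnothing$, so that the projection $\pi_p(2K)=2H_p$ is disjoint from $H_p$ and no extra contributions appear in the fibre over $1$. Everything after that is a routine repetition of the argument in Lemma~\ref{2notin-s}.
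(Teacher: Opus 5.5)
There is a gap in Step 1; Steps 2 and 3 are sound, and once it is filled your proof follows the paper's closely. The paper's route is the same: the sumset identity $K+K=K\sqcup 2K$, projection and Lemma~\ref{lem:finite-field}, then Goursat plus Cauchy--Davenport on fibres to force $N_p=N_q=1$ and $p=q$. Two of your choices are harmless simplifications: you use the hypothesis that $|S|$ is odd directly instead of re-deriving it, and you work only with the fibre over $1$.

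\textbf{The gap.} From $(K+K)\setminus K=2K$ you only obtain $K+K=\bigl(K\cap(K+K)\bigr)\cup 2K$. To conclude $K+K=K\cup 2K$ you also need $K\subseteq K+K$, and you never justify this. The set $K\cap(K+K)$ is invariant under multiplication by $K$, so it is either all of $K$ or empty, and the empty case must be excluded. Your later steps depend on this. Lemma~\ref{lem:finite-field} needs the exact identity $H_p+H_p=H_p\cup 2H_p$. Step~3 needs $H_p+H_p=\mathbb F_p^\times$ in order to find $x_1+x_2=1$. If instead $K+K=2K$, neither is available.

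\textbf{How the paper closes it.} The paper spends a paragraph on exactly this point. If $K\cap(K+K)=\varnothing$, then $K+K=2K$ has size $|K|$, so $K+K=1+K$. Hence $K+(k-1)=K$ for every $k\in K$. Letting $A$ be the additive subgroup generated by the elements $k-1$, this gives $1+A\subseteq K\subseteq\mathbb Z_{pq}^\times$. Every nonzero subgroup of $\mathbb Z_{pq}$ contains some $a$ with $1+a$ a non-unit, so $A=0$ and $K=\{1\}$, contradicting valency at least $2$.

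\textbf{A fix in your own framework.} In the empty case, projecting gives $H_p+H_p=2H_p$, so $|H_p+H_p|=|H_p|<p$. Cauchy--Davenport then forces $|H_p|\le 1$, and likewise $|H_q|\le 1$. So $K\le H_p\times H_q$ is trivial, again a contradiction. With this case added, your proof is complete.
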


\begin{proof}
Suppose such a digraph \(\Gamma\) exists.

Let \(K\) be the stabilizer of the vertex \(0\) in \(\operatorname{Aut}(\Gamma)\). Since \(\Gamma\) is a normal circulant digraph,
it follows from  Lemmas \ref{cayley-normal} and \ref{lem:main-1} that
\(
\Aut(\Gamma)=R(T)\rtimes K,\)
where
\(
K=\Aut(T,S)=\{\varphi\in\Aut(T):\varphi(S)=S\}
\) and    \( S=K\)
 is a multiplicative subgroup of \(\mathbb{Z}_{pq}^\times\).
From \(-S\cap S=\varnothing\), we have
\(
-1\notin K.
\)

Since \(2\in\Gamma_2^+(0)\) and \(\Gamma\) is \(2\)-distance-transitive, \(\Gamma_2^+(0)\) is a single \(K\)-orbit. Thus
\[
\Gamma_2^+(0)=2K.
\]
Also
\(
\Gamma_2^+(0)=(K+K)\setminus K.
\)
Therefore
\(
(K+K)\setminus K=2K.
\)

We claim that \(K\subseteq K+K\). Suppose not. Since \(K+K\) is invariant under multiplication by \(K\), we would have
\(
K\cap(K+K)=\varnothing.
\)
Then
\(
K+K=(K+K)\setminus K=2K.
\)
In particular, \(|K+K|=|K|\). Since \(1\in K\),
\(
1+K\subseteq K+K
\)
and \(|1+K|=|K|\), so
\(
K+K=1+K.
\)
Thus for every \(k\in K\),
\[
K+(k-1)=K.
\]
Let \(A\) be the additive subgroup generated by
\(
\{k-1:k\in K\}.
\)
Then \(K+A=K\). Since \(1\in K\), we have
\[
1+A\subseteq K\subseteq \mathbb{Z}_{pq}^\times.
\]
If \(A\neq\{0\}\), then \(A\) is one of
\[
p\mathbb Z_{pq},\qquad q\mathbb Z_{pq},\qquad \mathbb Z_{pq}.
\]
In each case there exists \(a\in A\) such that \(1+a\) is not a unit modulo \(pq\), contradicting \(1+a\in \mathbb{Z}_{pq}^\times\). Hence \(A=\{0\}\), forcing \(K=\{1\}\), which contradicts valency at least \(2\). Therefore
\[
K\subseteq K+K.
\]
Consequently,
\[
K+K=K\cup\Gamma_2^+(0)=K\cup 2K.
\]
Since \(2\notin K\), the cosets \(K\) and \(2K\) are disjoint, so
\[
K+K=K\sqcup 2K. \tag{1}
\]

Now project modulo \(p\) and modulo \(q\). Let
\[
H_p=\pi_p(K)\le \mathbb F_p^\times,\qquad H_q=\pi_q(K)\le \mathbb F_q^\times.
\]
Projecting (1) gives
\[
H_p+H_p=H_p\cup 2H_p, \tag{2}
\]
\[
H_q+H_q=H_q\cup 2H_q. \tag{3}
\]

We next show that \(|K|\) is odd, so \(H_p\) and \(H_q\) have odd order. From (2), the right-hand side is contained in \(\mathbb F_p^\times\), so it does not contain \(0\). If \(-1\in H_p\), then
\(
0=1+(-1)\in H_p+H_p,
\)
a contradiction. Thus \(-1\notin H_p\). Similarly,
\(
-1\notin H_q.
\)
If \(K\) contained an element of order \(2\), then under
\(
\mathbb{Z}_{pq}^\times\cong \mathbb{Z}_{p}^\times\times \mathbb{Z}_{q}^\times,
\)
that element would have both coordinates in \(\{\pm1\}\). It cannot be \((-1,-1)\), since \(-1\notin K\). Hence exactly one coordinate is \(-1\), giving \(-1\in H_p\) or \(-1\in H_q\), contradiction. Therefore \(K\) has no element of order \(2\), and so   \(|K|\) is odd. Hence \(H_p\) and \(H_q\) have odd order.

By Lemma~\ref{lem:finite-field} applied to \(H_p\) and \(H_q\), we obtain
\[
p\equiv 3\pmod 4,\qquad q\equiv 3\pmod 4,\qquad
|H_p|=\frac{p-1}{2},\qquad |H_q|=\frac{q-1}{2}. \tag{4}
\]
Also, since \(2\notin H_p\) and \(|H_p|=(p-1)/2\), we have
\(
\mathbb F_p^\times=H_p\sqcup 2H_p.
\)
Similarly,
\(
\mathbb F_q^\times=H_q\sqcup 2H_q.
\)

Now regard \(K\) as a subgroup of
\(
H_p\times H_q.
\)
Since both projections are surjective, by Goursat's Lemma there are subgroups
\[
N_p\le H_p,\qquad N_q\le H_q
\]
such that for every fixed \(x\in H_p\), the fiber
\(
Y_x=\{y\in H_q:(x,y)\in K\}
\)
is a coset of \(N_q\), and for every fixed \(y\in H_q\), the fiber
\(
X_y=\{x\in H_p:(x,y)\in K\}
\)
is a coset of \(N_p\).

Because \(K+K=K\sqcup 2K\), for every \(x\in\mathbb F_p^\times=H_p\sqcup 2H_p\), the fiber of \(K+K\) over \(x\) has size exactly
\(
|N_q|.
\)
Indeed, if \(x\in H_p\), only \(K\) contributes; if \(x\in 2H_p\), only \(2K\) contributes.

Fix \(x\in\mathbb F_p^\times\). Since
\(
\mathbb F_p^\times=H_p+H_p,
\)
there exist \(x_1,x_2\in H_p\) such that
\(
x_1+x_2=x.
\)
Let \(Y_{x_1},Y_{x_2}\subseteq H_q\) be the corresponding fibers of \(K\). Then the fiber of \(K+K\) over \(x\) contains
\(
Y_{x_1}+Y_{x_2}.
\)
Both \(Y_{x_1}\) and \(Y_{x_2}\) are cosets of \(N_q\), so
\[
|Y_{x_1}|=|Y_{x_2}|=|N_q|.
\]
By the Cauchy--Davenport theorem in \(\mathbb F_q\),
\[
|Y_{x_1}+Y_{x_2}|\ge \min(q,\,2|N_q|-1).
\]
Since
\(
|N_q|\le |H_q|=\frac{q-1}{2},
\)
we have
\(
2|N_q|-1\le q-2<q.
\)
Therefore
\(
|Y_{x_1}+Y_{x_2}|\ge 2|N_q|-1.
\)
But the full fiber of \(K+K\) over \(x\) has size \(|N_q|\). Hence
\(
|N_q|\ge 2|N_q|-1,
\)
which implies
\(
|N_q|\le 1.
\)
Thus
\[
|N_q|=1.
\]

By the same argument, using fibers over \(\mathbb F_q^\times=H_q\sqcup 2H_q\), we obtain
\[
|N_p|=1.
\]

Therefore the common quotient from Goursat's Lemma satisfies
\(
|H_p/N_p|=|H_q/N_q|.
\)
Since \(N_p\) and \(N_q\) are trivial, this gives
\(
|H_p|=|H_q|.
\)
Using (4), we get
\(
\frac{p-1}{2}=\frac{q-1}{2},
\)
so
\(p=q.\)
This contradicts the assumption that \(p\) and \(q\) are distinct.

Hence no such digraph \(\Gamma\) exists.
\end{proof}

\begin{lemma}\label{pq-even}
Let \(p,q\) be distinct odd primes. There is no subset
\(S\subseteq \mathbb Z_{pq}\) satisfying all of the following:
\begin{itemize}
    \item \(-S\cap S=\varnothing\) and \(\langle S\rangle=\mathbb Z_{pq}\);
    \item \(\Gamma=\operatorname{Cay}(\mathbb Z_{pq},S)\) is a normal circulant digraph;
    \item \(\Gamma\) is \(2\)-distance-transitive and has valency at least \(2\);
    \item \(1\in S\) and \(2\in \Gamma_2^+(0)\);
    \item \(|S|\) is even.
\end{itemize}
\end{lemma}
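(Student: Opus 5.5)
We argue by contradiction, following the set-up of Lemma~\ref{2notindistance2}. Let $K=\Aut(\Gamma)_0$. By Lemmas~\ref{cayley-normal} and~\ref{lem:main-1}, $\Aut(\Gamma)=R(T)\rtimes K$, $K=\Aut(T,S)$, and since $1\in S$ we get $S=K$ as a multiplicative subgroup of $\mathbb Z_{pq}^\times$. Moreover $-1\notin K$ and $0\notin K+K$. By $2$-distance-transitivity and $2\in\Gamma_2^+(0)$ we have $(K+K)\setminus K=2K$. The argument of Lemma~\ref{2notindistance2} showing $K\subseteq K+K$ does not use the parity of $|K|$, so we repeat it verbatim and obtain $K+K=K\sqcup 2K$. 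Projecting modulo $p$ and $q$ gives
\[
H_p+H_p=H_p\cup 2H_p,\qquad H_q+H_q=H_q\cup 2H_q,
\]
where $H_p=\pi_p(K)$ and $H_q=\pi_q(K)$. Since $0\notin H_p+H_p$, we have $-1\notin H_p$, and likewise $-1\notin H_q$.

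The key step is to show that $K$ has no element of order $2$, which contradicts $|K|$ being even (Cauchy). This is the same argument as in Lemma~\ref{2notindistance2}, and it uses only $-1\notin K$, $-1\notin H_p$ and $-1\notin H_q$. Suppose $t\in K$ has order $2$. Under $\mathbb Z_{pq}^\times\cong\mathbb F_p^\times\times\mathbb F_q^\times$, both coordinates of $t$ lie in $\{\pm1\}$ and are not both $1$. The case $(-1,-1)$ is exactly $t=-1$, which is excluded. In the other cases, $\pi_p(t)=-1$ or $\pi_q(t)=-1$, so $-1\in H_p$ or $-1\in H_q$, which is again excluded. Hence $|K|$ is odd, contradicting the hypothesis that $|S|=|K|$ is even.

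The main obstacle is checking that $-1\notin H_p$ really follows. This needs $0\notin H_p\cup 2H_p$, which holds because $H_p\subseteq\mathbb F_p^\times$ and $2$ is invertible since $p$ is odd. It also needs the projected identity to hold as an equality of sets, which it does because $\pi_p(K+K)=H_p+H_p$. With these checks the contradiction is immediate, so no Cauchy--Davenport or Goursat argument is needed in this case. If any doubt remained, a fallback is to apply Lemma~\ref{lem:finite-field}-type reasoning after first establishing $|H_p|$ odd, but the parity argument above already suffices.
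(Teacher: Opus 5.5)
Your proof is correct and follows essentially the paper's argument. In both, $-1\notin\pi_p(K),\pi_q(K)$ because no sum of two elements of $K$ vanishes modulo $p$ or $q$, and then an involution of $K$ (which exists since $|K|=|S|$ is even) would project to $-1$ in some coordinate. The only difference is that you first derive the full identity $K+K=K\sqcup 2K$ via the $K\subseteq K+K$ argument of Lemma~\ref{2notindistance2}; the paper needs only the inclusion $K+K\subseteq K\cup 2K\subseteq\mathbb Z_{pq}^\times$, so that detour is harmless but unnecessary.
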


\begin{proof}
Assume, for contradiction, that such a subset \(S\) exists. Let
\[
\Gamma=\operatorname{Cay}(\mathbb Z_n,S), n=pq.
\]
For \(i\ge 0\), write
\(
\Gamma_i^+(0)=\{x\in \mathbb Z_n : d(0,x)=i\}.
\)
Then \(\Gamma_1^+(0)=S\).

Since \(\Gamma\) is a normal circulant digraph,
it follows from  Lemmas \ref{cayley-normal} and \ref{lem:main-1} that
\(
\Aut(\Gamma)=R(T)\rtimes K,\)
where
\(
K=\Aut(T,S)=\{\varphi\in\Aut(T):\varphi(S)=S\}
\) and    \( S=K\)
 is a multiplicative subgroup of \(\mathbb{Z}_{pq}^\times\).
From \(-S\cap S=\varnothing\), we have
\(
-1\notin K.
\)
 Since \(K\) is transitive on \(\Gamma_2^+(0)\) and  \(2\in \Gamma_2^+(0)\),
we have \(\Gamma_2^+(0)=K\cdot 2.\)
Because every element of \(K\) is a unit and \(2\) is a unit modulo \(n\), every element of \(\Gamma_2^+(0)\) is also a unit.

Since $S+S=S\cup \Gamma_2^+(0)$, it follows that   every sum of two elements of \(S\) is a unit modulo \(n\), and so
\[
S+S\subseteq \mathbb Z_n^\times.
\]
Equivalently, no sum of two elements of \(S\) is divisible by \(p\) or by \(q\).

For \(r\in\{p,q\}\), let \(S_r\) be the image of \(S\) under the reduction map
\(\mathbb Z_n\to \mathbb F_r\). Since \(S\) is a subgroup of \(\mathbb Z_n^\times\), its image \(S_r\) is a subgroup of \(\mathbb F_r^\times\).

We now show that
\[
-1\notin S_p \quad\text{and}\quad -1\notin S_q.
\]
Suppose, for instance, that \(-1\in S_p\). Since \(S_p\) is a nonempty subgroup of \(\mathbb F_p^\times\), for any \(a\in S_p\) we have
\(
-a=(-1)a\in S_p.
\)
Thus there exist \(s,t\in S\) such that
\[
s\equiv a\pmod p,\qquad t\equiv -a\pmod p.
\]
Then \(s+t\equiv 0\pmod p\), contradicting the fact that no sum of two elements of \(S\) is divisible by \(p\). Hence \(-1\notin S_p\). The same argument gives \(-1\notin S_q\).

Finally, since \(|K|=|S|\) is even, the finite group \(K\) has an element \(x\) of order \(2\). Thus
\(
x^2\equiv 1\pmod n.
\)
By the Chinese remainder theorem, write
\[
x\equiv \varepsilon_p\pmod p,\qquad x\equiv \varepsilon_q\pmod q,
\]
where \(\varepsilon_p,\varepsilon_q\in\{\pm1\}\). Since \(x\neq 1\), at least one of \(\varepsilon_p,\varepsilon_q\) equals \(-1\). If \(\varepsilon_p=-1\), then \(-1\in S_p\), a contradiction. If \(\varepsilon_q=-1\), then \(-1\in S_q\), again a contradiction.

Therefore no such subset \(S\) exists.
\end{proof}

\begin{lemma}\label{1-in-s}
Let \(T=\mathbb{Z}_n\) and let \(S\subseteq T\) satisfy
\( -S\cap S=\varnothing, \langle S\rangle=T. \)
Let \(\Gamma=\Cay(T,S)\). Then for each   unit \(a\) in \(T\),
\( \Cay(T,S)\cong \Cay(T,a^{-1}S).
\)
In particular, if  \(S\) contains a unit, then we may assume  that
\( 1\in S.
\)

\end{lemma}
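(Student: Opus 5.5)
The plan is to write down an explicit isomorphism given by multiplication by a unit. Since $a$ is a unit of $T=\mathbb{Z}_n$, the map $\mu\colon T\to T$, $x\mapsto a^{-1}x$, is a group automorphism of $T$; in particular it is a bijection on the vertex set. I will check that $\mu$ carries arcs of $\operatorname{Cay}(T,S)$ to arcs of $\operatorname{Cay}(T,a^{-1}S)$ and non-arcs to non-arcs.

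By definition, $(x,y)$ is an arc of $\operatorname{Cay}(T,S)$ exactly when $y-x\in S$. By additivity of $\mu$,
\[
\mu(y)-\mu(x)=a^{-1}(y-x),
\]
and multiplication by $a^{-1}$ is a bijection of $T$. Hence $y-x\in S$ if and only if $\mu(y)-\mu(x)\in a^{-1}S$. Therefore $\mu$ is a digraph isomorphism $\operatorname{Cay}(T,S)\to\operatorname{Cay}(T,a^{-1}S)$.

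For completeness I will also note that the hypotheses of the lemma are preserved. First, $0\notin a^{-1}S$. Second,
\[
-(a^{-1}S)\cap a^{-1}S=a^{-1}\bigl(-S\cap S\bigr)=\varnothing .
\]
Third, $\langle a^{-1}S\rangle=\mu(\langle S\rangle)=\mu(T)=T$, because $\mu$ is an automorphism. So the new connection set satisfies the same standing assumptions.

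For the ``in particular'' statement, suppose $a\in S$ is a unit. Then $1=a^{-1}a\in a^{-1}S$, and replacing $S$ by $a^{-1}S$ gives an isomorphic circulant digraph whose connection set contains $1$. There is no genuine obstacle here; the only point needing care is that $\mu$ is a group automorphism, not merely a bijection, and this holds precisely because $\gcd(a,n)=1$.
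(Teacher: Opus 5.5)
Your proposal is correct and takes essentially the same approach as the paper: the map $x\mapsto a^{-1}x$ is an automorphism of $\mathbb{Z}_n$ that carries arcs of $\operatorname{Cay}(T,S)$ exactly onto arcs of $\operatorname{Cay}(T,a^{-1}S)$. The ``in particular'' part then follows from $1=a^{-1}a\in a^{-1}S$, and the standing hypotheses are preserved. Your single ``if and only if'' step just compresses the paper's separate forward and inverse-map checks.
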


\begin{proof}
Since \(a\) is a unit in \(\mathbb{Z}_n\), we have \(a\in(\mathbb{Z}_n)^\times\). Define the map
\[
\varphi:T\to T,\qquad \varphi(x)=a^{-1}x.
\]
Then \(\varphi\) is an automorphism of the additive group \(T\), because multiplication by a unit is a bijective group homomorphism of \(\mathbb{Z}_n\).

Now recall that in the Cayley digraph \(\Cay(T,S)\), there is an arc from \(x\) to \(y\) if and only if
\(y-x\in S.
\)
Under the map \(\varphi\), an arc \((x,x+s)\) with \(s\in S\) is sent to
\( (\varphi(x),\varphi(x+s)).
\)
Since \(\varphi\) is additive,
\(
\varphi(x+s)=\varphi(x)+\varphi(s).
\)
Thus the image of the arc is
\(
(\varphi(x),\varphi(x)+\varphi(s)),
\)
which is an arc of \(\Cay(T,\varphi(S))\).

Conversely, the inverse map \(\varphi^{-1}(x)=ax\) maps arcs of \(\Cay(T,\varphi(S))\) back to arcs of \(\Cay(T,S)\). Therefore \(\varphi\) induces a digraph isomorphism
\[
\Cay(T,S)\cong \Cay(T,\varphi(S)).
\]
But
\(\varphi(S)=a^{-1}S,
\)
so
\[
\Cay(T,S)\cong \Cay(T,a^{-1}S).
\]

Finally, assume \(S\) contains a unit   \(a\). Then   we have
\( 1=a^{-1}a\in a^{-1}S.
\)
Since $(a^{-1}S)^{-1}\cap a^{-1}S=\varnothing, \langle a^{-1}S\rangle =T$,   replacing \(S\) by the isomorphic connection set \(a^{-1}S\), we may assume without loss of generality that \(1\in S\).
\end{proof}

\begin{prop}\label{pq-notexit}
Let \(T=\mathbb Z_{pq}\), where \(p,q\) are distinct odd primes. Let \(S\subseteq T\) satisfy
\(
-S\cap S=\varnothing, \langle S\rangle=T.
\)
Suppose that \(\Gamma=\operatorname{Cay}(T,S)\) is  \(2\)-distance-transitive and has valency at least \(2\).
Then $\Gamma$ is not normal.

\end{prop}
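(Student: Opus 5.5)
Argue by contradiction: assume $\Gamma$ is normal and reduce to the three non-existence lemmas just proved. By Lemma~\ref{cayley-normal}, $\Aut(\Gamma)=R(T)\rtimes K$ with $K=\Aut(T,S)$. Since $\Gamma$ is arc-transitive, Lemma~\ref{lem:main-1}(1) shows that every element of $S$ has order $pq$, so $S\subseteq\mathbb Z_{pq}^\times$. Lemma~\ref{1-in-s} then lets us replace $S$ by $a^{-1}S$ for a unit $a\in S$; this preserves $-S\cap S=\varnothing$, generation, normality and $2$-distance-transitivity. So we may assume $1\in S$, and by Lemma~\ref{lem:main-1}(4) $S=K$ is a multiplicative subgroup of $\mathbb Z_{pq}^\times$ with $-1\notin S$.

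Next locate the element $2=1+1\in S+S$. Since $2\neq 0$ ($pq$ is odd), either $2\in S$ or $2\notin S$.
\begin{itemize}
\item If $2\in S$, Lemma~\ref{2notin-s} rules this out directly.
\item If $2\notin S$, note that $0\to1\to2$ is a directed path of length $2$ and $2\notin\{0\}\cup S$, so $d(0,2)=2$, i.e.\ $2\in\Gamma_2^+(0)$. Split on the parity of $|S|$: Lemma~\ref{2notindistance2} excludes odd $|S|$, and Lemma~\ref{pq-even} excludes even $|S|$.
\end{itemize}
Every case gives a contradiction, so $\Gamma$ is not normal.

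The only step needing care is the normalization $1\in S$. We must check that the isomorphism $x\mapsto a^{-1}x$ is induced by a group automorphism of $T$, so it conjugates $R(T)$ to $R(T)$ and therefore preserves normality. It also preserves $2$-distance-transitivity, since that is an isomorphism invariant. Beyond this, the proof is a bookkeeping assembly of the preceding lemmas; the hypotheses of each lemma (valency at least $2$, $-S\cap S=\varnothing$, generation) are inherited verbatim.
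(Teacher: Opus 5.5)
Your proposal is correct and follows the paper's proof essentially step for step. You assume normality, use Lemma~\ref{lem:main-1} and Lemma~\ref{1-in-s} to normalize to $1\in S=K$, split on $2\in S$ versus $2\in\Gamma_2^+(0)$, and dispatch the cases with Lemmas~\ref{2notin-s}, \ref{2notindistance2} and \ref{pq-even}; your added checks (that the normalizing map lies in $\Aut(T)$ and so preserves normality, and why $d(0,2)=2$ when $2\notin S$) fill in details the paper leaves implicit.
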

\begin{proof}
Assume for  contradiction that $\Gamma$ is  normal.
Then by   Lemmas \ref{cayley-normal} and \ref{lem:main-1},
\(
\Aut(\Gamma)=R(T)\rtimes \Aut(T,S),\)
where
\(\Aut(T,S)=\{\varphi\in\Aut(T):\varphi(S)=S\}.\)
Since \(\Gamma\) is  \(2\)-distance-transitive, $K=\Aut(T,S)$ acts transitive on $S$. Since
\(T=\mathbb Z_{pq}\) and \(\langle S\rangle = T\), it follows that
 $S$ contains units,  and by Lemma \ref{1-in-s},  \(1\in S\).
Thus   $2\in S$ or $\Gamma_2^+(0)$.

If $2\in S$, then  Lemma \ref{2notin-s} gives a contradiction.
If  $2\in \Gamma_2^+(0)$, then Lemma \ref{2notindistance2} rules out the case where $|S|$ is odd, and
 Lemma \ref{pq-even} rules out the case where  $|S|$ is even.
 In every case we obtain a contradiction.

Therefore   $\Gamma$ is not normal.
\end{proof}

\begin{prop}\label{empty-equal-1}
Let \(T=\mathbb{Z}_n\) with \(n\ge 3\), and let \(S\subseteq T\) satisfy
\(
-S\cap S=\varnothing\) and \( S\rangle=T.\)
Let \(\Gamma=\operatorname{Cay}(T,S)\) be a normal circulant digraph which is
\(2\)-distance-transitive. Assume that \(\Gamma\) has valency at least \(2\)
and diameter at least \(2\). If
\(
\Gamma^+(u)\cap \Gamma^+(v)=\varnothing
\)
for some arc \((u,v)\), then
\(
\Gamma_2^+(u)=\Gamma^+(v).
\)
Consequently,
\(
\Gamma\cong C_r(m,1)
\)
for some integers \(m\ge 2\) and \(r\ge 3\).
\end{prop}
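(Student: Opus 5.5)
The first half of the statement follows the counting argument of Proposition~\ref{lem:even-3}, with the parity input of Lemma~\ref{lem:even-2} replaced by the hypothesis $\Gamma^+(u)\cap\Gamma^+(v)=\varnothing$. Put $G=\Aut(\Gamma)$. By vertex-transitivity we may take $u=0_T$.

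\textbf{Step 1: $\Gamma^+(v)\subseteq\Gamma_2^+(u)$.} Every $w\in\Gamma^+(v)$ satisfies $d(u,w)\le 2$. Also $w\ne u$, since otherwise $u\to v\to u$ would be a $2$-cycle, contradicting $-S\cap S=\varnothing$. Finally $w\notin\Gamma^+(u)$ by the disjointness hypothesis. Hence $w\in\Gamma_2^+(u)$.

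\textbf{Step 2: the two sets have equal size.} By normality, Lemma~\ref{cayley-normal} gives $G_u=\Aut(T,S)$, and Lemma~\ref{lem:main-1}(3) gives $|G_u|=|S|$. Since $\Gamma$ is $2$-distance-transitive, $G_u$ is transitive on $\Gamma_2^+(u)$. This set is nonempty because the diameter is at least $2$, so $|\Gamma_2^+(u)|$ divides $|G_u|=|S|$. Step~1 gives
\[
|\Gamma_2^+(u)|\ge|\Gamma^+(v)|=|S|.
\]
Together these force $|\Gamma_2^+(u)|=|S|=|\Gamma^+(v)|$, and the inclusion of Step~1 becomes the equality $\Gamma_2^+(u)=\Gamma^+(v)$.

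\textbf{Step 3: identify $\Gamma$.} Now $\Gamma$ is an arc-transitive oriented graph of diameter at least $2$ containing an arc $(u,v)$ with $\Gamma^+(v)=\Gamma_2^+(u)$. By Lemma~\ref{prop-1=2}, $\Gamma$ is either a directed cycle or $C_r(m,1)$ with $m\ge2$ and $r\ge3$. A directed cycle has valency $1$, which the hypothesis of valency at least $2$ excludes. Hence $\Gamma\cong C_r(m,1)$. Lemma~\ref{lem:main-1} applies because $\Gamma$ has order $n\ge 3$, valency at least $2$, and is arc-transitive; arc-transitivity comes from $2$-distance-transitivity at distance $1$.

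\textbf{The main obstacle is the consistency of the conclusion.} By Lemma~\ref{crv-not-circdig-1}, $C_r(m,1)$ is not a normal circulant with respect to its standard cyclic regular subgroup. This is the same observation that Proposition~\ref{lem:even-3} used to discard this case. So the consequence as stated really records that the hypotheses cannot all hold: under normality, the conclusion $\Gamma\cong C_r(m,1)$ yields a contradiction, and the disjoint-neighbourhood situation does not occur for normal digraphs of valency at least $2$. To state it that way I would need to check that $\Gamma\cong C_r(m,1)$ as digraphs together with $R(T)\unlhd\Aut(\Gamma)$ is impossible for every cyclic regular subgroup $R(T)$, not only the one built in Lemma~\ref{crv-not-circdig-1}.

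My first attempt at this would use the facts that $\Aut(C_r(m,1))=S_m\wr\mathbb Z_r$ and that the normal subgroup $R(T)$ would meet the base group in a normal subgroup. Alternatively, Lemma~\ref{lem:main-1}(3) gives $|G_u|=|S|=m$, whereas in $S_m\wr\mathbb Z_r$ the stabiliser has order $m!^{\,r-1}(m-1)!$. This exceeds $m$ because $r\ge3$ and $m\ge2$ give $m!^{\,r-1}\ge m!\ge m$ and $(m-1)!\ge1$, with strict inequality overall. This contradicts normality immediately and is probably the cleanest route.
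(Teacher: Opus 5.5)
Your Steps 1--3 match the paper's proof. It takes $u=0_T$, deduces $\Gamma^+(v)\subseteq\Gamma_2^+(u)$ from disjointness and $-S\cap S=\varnothing$, and uses $|\Gamma_2^+(u)|$ dividing $|G_u|=|S|$ to force equality. It then invokes Lemma~\ref{prop-1=2} and excludes the directed cycle by valency.

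Your closing remark is correct but not needed for this statement: $|G_u|=m$ under normality, whereas $|\Aut(C_r(m,1))_u|=(m!)^{r-1}(m-1)!>m$, so the hypotheses are in fact inconsistent. It does give a sounder justification than Lemma~\ref{crv-not-circdig-1} for discarding $C_r(m,1)$ in the normal case, because it holds for every cyclic regular subgroup, not only the one constructed there.
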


\begin{proof}
Assume that \(\Gamma^+(u)\cap \Gamma^+(v)=\varnothing\) for some arc \((u,v)\).
Since \(\Gamma\) is vertex-transitive, we may assume \(u=0_T\). Then \(v\in S\).
Set \(A=\Aut(\Gamma)\).

Since \(\Gamma\) is a normal circulant digraph, \(R(T)\unlhd A\). Because
\(T=\langle S\rangle\), Lemma \ref{lem:main-1} implies that the action of
\(A_u\) on \(S\) is faithful and that
\(
|A_u|=|S|.
\)

Since \(\Gamma\) is \(2\)-distance-transitive, \(A_u\) acts transitively on
both \(S=\Gamma_1^+(u)\) and \(\Gamma_2^+(u)\). Hence
\[
|\Gamma_2^+(u)|\mid |A_u|=|S|.
\]

Now \(\Gamma^+(u)\cap \Gamma^+(v)=\varnothing\) implies
\(
\Gamma^+(v)\subseteq \Gamma_2^+(u).
\)
Indeed, if \(w=v+t\in \Gamma^+(v)\) with \(t\in S\), then
\(w\notin \Gamma^+(u)\) by the disjointness assumption. Also \(w\neq u\),
 otherwise \(t=-v\in -S\cap S\), contradicting \(-S\cap S=\varnothing\).
Thus \(w\) lies at distance \(2\) from \(u\), so \(w\in \Gamma_2^+(u)\).
Therefore
\[
|\Gamma_2^+(u)|\ge |\Gamma^+(v)|.
\]
By vertex-transitivity, \(|\Gamma^+(v)|=|S|\). Hence
\(
|\Gamma_2^+(u)|\ge |S|.
\)
Together with \(|\Gamma_2^+(u)|\mid |S|\), this forces
\[
|\Gamma_2^+(u)|=|S|=|\Gamma^+(v)|.
\]
Since \(\Gamma^+(v)\subseteq \Gamma_2^+(u)\), we obtain
\[
\Gamma_2^+(u)=\Gamma^+(v).
\]

Therefore, by Lemma  \ref{prop-1=2}, \(\Gamma\) is either a directed
cycle or isomorphic to \(C_r(m,1)\) for some integers \(m\ge 2\) and
\(r\ge 3\). Since \(\Gamma\) has valency at least \(2\), it is not a directed
cycle, and hence
\(
\Gamma\cong C_r(m,1).
\)
\end{proof}

\begin{prop}\label{circulant-p-1}
Let $p$ be an odd prime, and let $\Gamma = \Cay(T, S)$ be a $2$-distance-transitive circulant digraph, where $T \cong \mathbb{Z}_p$, $-S \cap S = \varnothing$, and $\langle S \rangle = T$. Assume that the right regular representation $R(T)$ is normal in $ \Aut(\Gamma)$. If $|S| \geq 2$, then $\Gamma$ is isomorphic to the Paley tournament of order $p$ with $p \equiv 3 \pmod{4}$.
\end{prop}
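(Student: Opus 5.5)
By Lemma~\ref{lem:main-1}, normality gives $\Aut(\Gamma)=R(T)\rtimes K$ with $K=\Aut(T,S)\le\mathbb F_p^\times$. Every nonzero element of $\mathbb Z_p$ is a unit, so by Lemma~\ref{1-in-s} we may assume $1\in S$. Then Lemma~\ref{lem:main-1}(4) shows that $S=K$ is a multiplicative subgroup $H$ of $\mathbb F_p^\times$ with $|H|\ge 2$. Since $-S\cap S=\varnothing$, we have $-1\notin H$, so $|H|$ is odd. Hence $|H|\ge 3$, $p>3$ and $0\notin H+H$. The plan is to show $|H|=(p-1)/2$. Once this holds, $H$ is the group of nonzero squares, and $-1\notin H$ forces $p\equiv 3\pmod 4$; thus $\Gamma=\operatorname{Cay}(\mathbb F_p,H)$ is exactly $\mathrm{PTr}(p)$.

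First I will check that the diameter is at least $2$. If it were $1$, then $S=\mathbb F_p^\times$, contradicting $S\cap -S=\varnothing$. So $\Gamma_2^+(0)=(H+H)\setminus H$ is nonempty, contained in $\mathbb F_p^\times$, and a single $H$-orbit by $2$-distance-transitivity; write it as $aH$ with $a\notin H$. Next I split according to whether $2\in H$.

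\emph{Case $2\in H$.} Then $1+1\in H+H$, so $H\subseteq H+H$ by $H$-invariance, and $H+H=H\cup aH$ is a union of two distinct $H$-orbits. Lemma~\ref{lem:projection-classification} gives $H=\mathbb F_p^\times$ or $|H|=(p-1)/2$. The former is excluded since $-1\notin H$.

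\emph{Case $2\notin H$.} Then $2=1+1\in\Gamma_2^+(0)$, so $aH=2H$. I will show $H\subseteq H+H$ exactly as in the proof of Lemma~\ref{2notindistance2}. Suppose not; then $H\cap(H+H)=\varnothing$, so $H+H=2H$ and $|H+H|=|H|$. Since $1+H\subseteq H+H$ has the same size, $H+H=1+H$. It follows that $H$ is invariant under translation by each $k-1$ with $k\in H$. In $\mathbb F_p$ the additive subgroup generated by such a nonzero element is all of $\mathbb F_p$, which contradicts $H\subseteq\mathbb F_p^\times$ unless $H=\{1\}$. Therefore $H+H=H\cup 2H$, and Lemma~\ref{lem:finite-field} yields $|H|=(p-1)/2$ and $p\equiv3\pmod4$.

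The main obstacle is establishing $H\subseteq H+H$ in the second case, since only then do Vosper-type lemmas (Lemmas~\ref{lem:two-cosets}, \ref{lem:finite-field}) apply. For prime order this reduces to the easy translation-invariance argument above. The remaining verification is that $\operatorname{Cay}(\mathbb Z_p,H)$ with $H$ the index-$2$ subgroup coincides with the definition of $\mathrm{PTr}(p)$, which is immediate.
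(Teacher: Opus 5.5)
Your proof is correct, but it reaches the two key facts by a different route from the paper's.

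\textbf{First step, $H\subseteq H+H$.} The paper argues structurally. If $\Gamma^+(0)\cap\Gamma^+(v)=\varnothing$ for an arc $(0,v)$, then Proposition~\ref{empty-equal-1}, and through it the analysis in Lemma~\ref{prop-1=2}, forces $\Gamma\cong C_r(m,1)$. Its order $rm$ (with $r\ge 3$, $m\ge 2$) is not prime, a contradiction. You replace this with the short translation-invariance argument, which works because any nonzero element generates $\mathbb F_p$ additively.

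\textbf{Second step, $|H|=(p-1)/2$ from $H+H=H\cup cH$.} The paper uses no additive combinatorics here. It observes that every $x\in H$ is a root of $((x+1)^m-1)((x+1)^m-\lambda)$ with $\lambda=c^m$. Hence $x^m-1$ divides this polynomial. Comparing the constant term and the coefficients of $x$ and $x^2$ modulo $x^m-1$, via Vandermonde-type binomial identities, gives $m(m+1)(2m+1)\equiv 0\pmod p$, so $p=2m+1$. You instead split on whether $2\in H$. You then invoke Lemma~\ref{lem:projection-classification} (which rests on Lemma~\ref{lem:two-cosets}) in one case and Lemma~\ref{lem:finite-field} in the other, both built on Vosper's theorem.

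\textbf{What each route buys.} Yours avoids both the $C_r(m,1)$ machinery and the somewhat intricate coefficient bookkeeping. It reuses lemmas the paper needs anyway for the $pq$ case. The paper's route is uniform in the second coset $cH$, with no case split, and it needs no external theorem such as Vosper's for this step.

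Your case split is harmless, since the same Vosper argument on $A=H\cup\{0\}$ runs identically in both cases. Your normalisation $1\in S$ via Lemma~\ref{1-in-s} also makes $S=H$ literally true, avoiding the paper's slip between $S=H$ and $S=aH$.
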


\begin{proof}
Identify $T$ with the additive group of the finite field $\mathbb{F}_p$. Let
$(u, v)$ be an arc of $\Gamma$ with $u=0$.
If  $\Gamma^+(u) \cap \Gamma^+(v) = \varnothing$, then by Proposition \ref{empty-equal-1},
\( \Gamma \cong C_r(m,1)
\)
for some integers \(m\ge 2\) and \(r\ge 3\) with order $mr$, contradicting that $\Gamma$ has prime $p$ vertices.
Thus   \[\Gamma^+(u) \cap \Gamma^+(v) \neq \varnothing.\]

Since $R(T) \trianglelefteq G$, \( G = R(T) \rtimes H,\)
where $H =\Aut(T,S) \leq \mathbb{F}_p^\times$ is the stabilizer of the zero vertex $u=0 \in \mathbb{F}_p$, acting on $\mathbb{F}_p$ by field multiplication.
Moreover,
\(
S = H
\)
and  $|S| = |H| =: m$.

The condition $-S \cap S = \varnothing$ in the additive group $\mathbb{F}_p$, is equivalent to $H \cap (-H) = \varnothing$, which holds if and only if $-1 \notin H$. Since $\mathbb{F}_p^\times$ is cyclic of even order $p-1$, it contains a unique element of order $2$, namely $-1$. Thus $H$ contains no element of order $2$, so $m = |H|$ is odd.

We now invoke the assumption that $\Gamma^+(u) \cap \Gamma^+(v) \neq \varnothing$ for every arc $(u, v)$. By vertex-transitivity we may take $u = 0$ and $v = a \in S$. Then
\[
\Gamma^+(0) \cap \Gamma^+(a) = S \cap (a + S) \neq \varnothing.
\]
Substituting $S = aH$ gives $aH \cap (a + aH) \neq \varnothing$. Dividing by $a \neq 0$, this is equivalent to
\[
H \cap (1 + H) \neq \varnothing, \quad \text{i.e.,} \quad H \cap (H + H) \neq \varnothing.
\]
Observe that $H \cap (H + H)$ is invariant under multiplication by elements of $H$. Since $H$ is a single orbit under its own multiplication action, a nonempty invariant subset must coincide with $H$. Hence
\[
H \subseteq H + H.
\]

The set of vertices at directed distance $2$ from $0$ is
\(
\Gamma_2^{+}(0) = (S + S) \setminus S.
\)
Since $S + S = a(H + H)$, we have
\(
\Gamma_2^{+}(0) = a \cdot \bigl( (H + H) \setminus H \bigr).
\)
By $2$-distance-transitivity, $H$ acts transitively on $\Gamma_2^{+}(0)$, so $(H + H) \setminus H$ is a single $H$-orbit. As the $H$-orbits on $\mathbb{F}_p^\times$ are multiplicative cosets of $H$, there exists $c \in \mathbb{F}_p^\times \setminus H$ such that
\[
(H + H) \setminus H = cH.
\]
Combined with $H \subseteq H + H$, this yields
\[
H + H = H \cup cH.
\]
In particular, $|H + H| = 2m$. Note that $0 \notin H + H$, for otherwise there would exist $h_1, h_2 \in H$ with $h_1 + h_2 = 0$, i.e.\ $-h_1 = h_2 \in H$, contradicting $-1 \notin H$. Hence $2m \leq p - 1$.

Since $H$ is the unique subgroup of $\mathbb{F}_p^\times$ of order $m$, it consists precisely of the $m$-th roots of unity:
\[
H = \{ x \in \mathbb{F}_p^\times : x^m = 1 \}.
\]
Let $\lambda = c^m$. Since $c \notin H$, we have $\lambda \neq 1$. For any $x, y \in H$, the sum $x + y$ lies in $H \cup cH$, so
\(
(x + y)^m \in \{ 1, \lambda \}.
\)
Setting $y = 1$, we see that for every $x \in H$,
\(
(x + 1)^m - 1 = 0 \quad \text{or} \quad (x + 1)^m - \lambda = 0.
\)
Thus every element of $H$ is a root of the polynomial
\[
f(x) = \bigl( (x + 1)^m - 1 \bigr) \bigl( (x + 1)^m - \lambda \bigr).
\]
Since $H$ is exactly the set of roots of $x^m - 1$, and $x^m - 1$ has no repeated roots, $x^m - 1$ divides $f(x)$ in $\mathbb{F}_p[x]$, i.e.
\[
\bigl( (x + 1)^m - 1 \bigr) \bigl( (x + 1)^m - \lambda \bigr) \equiv 0 \pmod{x^m - 1}.
\]

We expand $(x + 1)^m$ modulo $x^m - 1$. By the binomial theorem and $x^m \equiv 1$,
\[
(x + 1)^m = \sum_{k=0}^m \binom{m}{k} x^k \equiv 2 + \sum_{k=1}^{m-1} \binom{m}{k} x^k.
\]
Define
\[
B(x) = \sum_{k=1}^{m-1} \binom{m}{k} x^k,
\]
so that $(x + 1)^m \equiv 2 + B(x) \pmod{x^m - 1}$. Substituting into the congruence gives
\(
(1 + B(x)) (2 - \lambda + B(x)) \equiv 0 \pmod{x^m - 1},
\)
or equivalently
\(
B(x)^2 + (3 - \lambda) B(x) + (2 - \lambda) \equiv 0 \pmod{x^m - 1}.
\)

We first compare constant terms. The constant term of $B(x)$ is $0$. The constant term of $B(x)^2$ arises from products $x^k \cdot x^{m-k} = x^m \equiv 1$, and equals $\sum_{k=1}^{m-1} \binom{m}{k}^2$. By Vandermonde's identity,
\[
\sum_{k=0}^m \binom{m}{k}^2 = \binom{2m}{m},
\]
so
\[
\sum_{k=1}^{m-1} \binom{m}{k}^2 = \binom{2m}{m} - 2.
\]
Equating constant terms:
\[
\binom{2m}{m} - 2 + (2 - \lambda) = 0 \quad \Longrightarrow \quad \lambda = \binom{2m}{m}.
\]

Next, we compare coefficients of $x^j$ for $1 \leq j \leq m-1$. The coefficient of $x^j$ in $B(x)^2$ comes from pairs $(k, j-k)$ and pairs $(k, j+m-k)$ (since $x^{j+m} \equiv x^j$). Using the convolution property of binomial coefficients modulo $x^m - 1$, the coefficient is
\[
\binom{2m}{j} + \binom{2m}{m-j} - 4\binom{m}{j}.
\]
Equating coefficients in the congruence yields
\[
\binom{2m}{j} + \binom{2m}{m-j} - 4\binom{m}{j} + (3 - \lambda) \binom{m}{j} = 0,
\]
which simplifies to
\[
\binom{2m}{j} + \binom{2m}{m-j} = (1 + \lambda) \binom{m}{j}. \tag{$\ast$}
\]

Now set $j = 1$ in ($\ast$):
\[
\binom{2m}{1} + \binom{2m}{m-1} = (1 + \lambda) \binom{m}{1}.
\]
That is,
\[
2m + \binom{2m}{m-1} = (1 + \lambda) m.
\]
Using the identity
\[
\binom{2m}{m-1} = \frac{m}{m+1} \binom{2m}{m} = \frac{m}{m+1} \lambda,
\]
substitute and divide by $m$:
\(
2 + \frac{\lambda}{m+1} = 1 + \lambda.
\)
Rearranging gives
\[
1 = \lambda \cdot \frac{m}{m+1} \quad \Longrightarrow \quad \lambda = \frac{m+1}{m}.
\]

Next set $j = 2$ in ($\ast$):
\[
\binom{2m}{2} + \binom{2m}{m-2} = (1 + \lambda) \binom{m}{2}.
\]
We have
\[
\binom{2m}{2} = m(2m - 1), \quad \binom{m}{2} = \frac{m(m-1)}{2},
\]
and
\[
\binom{2m}{m-2} = \frac{m(m-1)}{(m+1)(m+2)} \binom{2m}{m} = \frac{m(m-1)}{(m+1)(m+2)} \lambda.
\]
Substitute $\lambda = \frac{m+1}{m}$:
\[
\binom{2m}{m-2} = \frac{m-1}{m+2}.
\]
Substitute all terms into the $j=2$ equation:
\(
m(2m - 1) + \frac{m-1}{m+2} = \frac{2m+1}{m} \cdot \frac{m(m-1)}{2} = \frac{(2m+1)(m-1)}{2}.
\)
Multiply through by $2(m+2)$ and simplify:
\[
2m(2m-1)(m+2) + 2(m-1) = (2m+1)(m-1)(m+2).
\]
Expanding both sides and collecting terms yields
\(
2m^3 + 3m^2 + m = m(2m+1)(m+1) = 0
\)
in $\mathbb{F}_p$. Since $1 \leq m < p$, neither $m$ nor $m+1$ is divisible by $p$. Therefore
\[
2m + 1 = p \quad \Longrightarrow \quad m = \frac{p-1}{2}.
\]

Thus $H$ is the unique subgroup of $\mathbb{F}_p^\times$ of index $2$, i.e.\ the set $Q$ of nonzero quadratic residues in $\mathbb{F}_p$. Since $-1 \notin H$, the element $-1$ is a non-residue, which implies $p \equiv 3 \pmod{4}$.

Finally, $S = aH = aQ$. The map $\varphi : x \mapsto a^{-1}x$ is an automorphism of the additive group $\mathbb{F}_p$, and $\varphi(S) = Q$. Hence
\[
\Gamma = \Cay(\mathbb{F}_p, aQ) \cong \Cay(\mathbb{F}_p, Q),
\]
the Paley tournament of order $p$. This completes the proof.
\end{proof}

\begin{lemma}\label{2dtdig-quotient-nleqt}
Let $T$ be a cyclic group, and let $S\subseteq T$ satisfy  $S\cap -S=\varnothing$ and $T=\langle S\rangle$. Let $\Gamma=\operatorname{Cay}(T,S)$ be a  $2$-distance-transitive circulant digraph, and  let    $N$ be a  nontrivial proper normal  subgroup of $\Aut(\Gamma)$ with at least three orbits.   Then $N\leq T$.
\end{lemma}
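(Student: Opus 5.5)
The plan is to show in turn that $N$ acts semiregularly on $V(\Gamma)$, that $N$ is cyclic, and that $N$ coincides with the subgroup of $R(T)$ having the same orbits. Write $G=\Aut(\Gamma)$, $u=0_T$, and let $\mathcal B$ be the set of $N$-orbits. Since $N\unlhd G$ and $N$ is intransitive, $\mathcal B$ is a $G$-invariant partition with $|\mathcal B|\ge 3$. Because $R(T)$ is transitive, $\mathcal B$ is also $R(T)$-invariant. A cyclic regular group has exactly one block system with blocks of a given size $k$, namely the cosets of the subgroup of order $k$. Hence the blocks are the cosets of the unique subgroup $T_0\le R(T)$ with $|T_0|=|B|$ for $B\in\mathcal B$.

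\textbf{Step 1: $N$ is semiregular.} Suppose $N_u\neq 1$. Since $N_u\unlhd G_u$ and $G_u$ is transitive on $\Gamma^+(u)$ and on $\Gamma_2^+(u)$, the $N_u$-orbits on each of these sets have a common length. I would compare two configurations.

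\begin{enumerate}[(a)]
\item If the out-neighbours of $u$ lie in at least two blocks, then $G_u$ permutes these blocks transitively. Since $N$ fixes every block, $N_u$ acts within each block.
\item If $\Gamma^+(u)$ lies in a single block $B_1$, then $\Gamma_N$ is a directed cycle of length $|\mathcal B|\ge 3$.
\end{enumerate}

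In case (b) the arcs run from block $B_i$ to block $B_{i+1}$. Here I would use $2$-distance-transitivity, through the fact that $G_u$ is transitive on $\Gamma_2^+(u)\subseteq B_2$, to force $\Gamma^+(v)=\Gamma_2^+(u)$ for an arc $(u,v)$. Lemma~\ref{prop-1=2} then gives $\Gamma\cong C_r(b,1)$. In case (a) I would argue that a nontrivial $N_u$ fixing $u$ and fixing all blocks must, by connectivity and arc-transitivity, act nontrivially on $\Gamma^+(u)\cap B$ for each out-block $B$. I would then propagate this along a geodesic, to contradict the divisibility relation $|\Gamma_2^+(u)|\mid |G_u|$ in the style of Proposition~\ref{empty-equal-1}.

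\textbf{Step 2: $N$ is cyclic and equals $T_0$.} Once $N$ is semiregular, $|N|=|B|=|T_0|$. The group $R(T)$ normalizes $N$, so $NR(T)$ is a group in which $R(T)$ acts on each block of $\mathcal B$. Consider the kernel $K$ of $G$ acting on $\mathcal B$. Then $N\le K$ and $T_0\le K$, and the quotient $\Gamma_{\mathcal B}$ is a $G/K$-arc-transitive circulant of order $|\mathcal B|\ge 3$. I would show that $K$ acts faithfully and semiregularly. For $K$, I would apply the same Step 1 argument, since $K$ is also normal in $G$ and has the same orbits. This gives $K=N$. Since $T_0\le K$ and $|T_0|=|K|$, it follows that $N=K=T_0\le R(T)$.

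\textbf{Main obstacle.} I expect Step 1 to be the main obstacle. In case (b), the configuration $C_r(b,1)\cong\overrightarrow{C}_r[\overline{\K}_b]$ has $\Aut=S_b\wr\mathbb Z_r$. There the base group $S_b^r$ is a normal subgroup with $r\ge 3$ orbits which is not semiregular when $b\ge 3$, so it does not lie in $T$. To close the argument as the lemma is worded, I would first try to show from the hypotheses that case (b) cannot occur with $N_u\ne1$. Failing that, I would isolate the lexicographic-product case $\Gamma\cong \Sigma[\overline{\K}_b]$ and verify precisely what is needed there. In that case the intended use of the lemma may only require the statement for $N$ contained in the kernel of the action on the lexicographic blocks. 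The argument in case (a) should then go through via the counting $|\Gamma_2^+(u)|\mid |G_u|$.
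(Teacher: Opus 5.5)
\textbf{Verdict.} There is a genuine gap in Step~1, and it cannot be closed. The configuration you describe under \emph{Main obstacle} is an actual counterexample to the statement as worded, so no argument can yield semiregularity of $N$, or $N\le T$, from these hypotheses.

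Take $\Gamma=C_r(b,1)\cong\overrightarrow{C}_r[\overline{\K}_b]$ with $r\ge 3$ and $b\ge 2$.
\begin{itemize}
\item By Lemma~\ref{crv-circdig-1}, $\Gamma=\Cay(\mathbb Z_{rb},S)$ with $S=\{1+kr\}$. This set contains $1$, so it generates $\mathbb Z_{rb}$, and $S\cap -S=\varnothing$ because $1\not\equiv -1\pmod r$.
\item Two vertices in the same layer are at directed distance $r\ge 3$. Hence the pairs at distance $2$ are exactly $((i,x),(i+2,y))$, and $\Aut(\Gamma)=S_b\wr\mathbb Z_r$ is transitive on these pairs and on arcs. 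So $\Gamma$ is $2$-distance-transitive.
\item The base group $N=(S_b)^r$ is a nontrivial proper normal subgroup with $r\ge 3$ orbits. Yet $|N|=(b!)^r>rb=|T|$, so $N\not\le T$. It is already non-semiregular for $b=2$, not only for $b\ge 3$.
\end{itemize}
Your fallback of showing that case (b) cannot occur with $N_u\ne 1$ is therefore impossible. The right conclusion is that the lemma must be restricted.

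\textbf{Case (a) also fails.} The paper's own examples $G(p^m,r)[\overline{\K}_d]$ with $d\ge 2$ are counterexamples there too. For instance, take $\mathrm{PTr}(7)[\overline{\K}_2]=\Cay(\mathbb Z_7\times\mathbb Z_2,\,Q\times\mathbb Z_2)$ with $Q=\{1,2,4\}$.
\begin{itemize}
\item A tournament has no $2$-cycles, so two vertices of one fibre are at distance $3$. All distance-$2$ pairs are therefore lifted from $\mathrm{PTr}(7)$, and the digraph is $2$-distance-transitive.
\item The out-neighbours of $u$ meet three blocks.
\item By Theorem~\ref{arccirculant-explicit-1}(5), $\Aut(\Gamma)=S_2\wr\Aut(\mathrm{PTr}(7))$. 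Its base group $(S_2)^7$ is normal, non-semiregular, and has $7$ orbits.
\end{itemize}

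\textbf{The tool you planned is vacuous.} $|\Gamma_2^+(u)|$ divides $|G_u|$ by orbit--stabilizer alone. In Proposition~\ref{empty-equal-1} the contradiction came from combining this with $|G_u|=|S|$. That equality rests on $R(T)\unlhd\Aut(\Gamma)$ through Lemma~\ref{lem:main-1}, and it is unavailable here.

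\textbf{Comparison with the paper.} The paper's proof has exactly the same hole. It gets semiregularity, and that $N$ is the kernel on $\Gamma_N$, by quoting Lemma~5.3 of Devillers--Giudici--Li--Praeger. That is a result about undirected graphs whose only exception is $\K_{m[b]}$. The lexicographic products $\Sigma[\overline{\K}_b]$ in Theorem~\ref{th-2dt-circ}(II)(3)--(4) show it does not transfer to digraphs. Once semiregularity and the kernel property are granted, the paper's remaining step coincides with your Step~2: $TN/N$ is abelian and transitive, hence regular, so $|T|/|T\cap N|=|T|/|N|$.

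\textbf{What a correct version needs.} It needs an extra hypothesis, such as $R(T)\unlhd\Aut(\Gamma)$. This holds in the only place the lemma is used, Proposition~\ref{2dtdig-normal-primepower}. Any proof must then use that hypothesis essentially, since without it the conclusion is false.
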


\begin{proof}
The condition  $S\cap -S=\varnothing$ ensures that $\Gamma\ncong \K_{m[b]}$ for any integers $m\geq 2$ and $b\geq 3$.
Since $\Gamma=\operatorname{Cay}(T,S)$ is    $2$-distance-transitive and  $N$ is a nontrivial proper normal subgroup of $\Aut(\Gamma)$ with at least three orbits, it follows
from \cite[Lemma 5.3]{DevillersGiudiciLiPraeger2012} that
$N$ acts  semiregularly on $V(\Gamma)=T$ and is precisely  the kernel of the induced action of $\Aut(\Gamma)$ on the vertex quotient digraph $\Gamma_N$. This implies  the order relation $|V(\Gamma_N)|=|T|/|N|$.

Additionally, the group $T$ acts transitively on $V(\Gamma)$,  so
the quotient group $TN/N\cong T/(T\cap N)$ acts transitively on $V(\Gamma_N)$. As a cyclic group,    $T$ is abelian, so its quotient $T/(T\cap N)$ is also abelian. Any abelian transitive group action on a finite set is regular, and thus
 $TN/N\cong T/(T\cap N)$ acts regularly   on $V(\Gamma_N)$.  By the definition of regular group actions, we have   $| T/(T\cap N)|=|V(\Gamma_N)|$.

 Combining the above equalities yields   $| T/(T\cap N)|=|T|/|N|$, which forces  $T\cap N=N$. This inclusion immediately implies   $N\leq T$, which completes the proof.
\end{proof}

\begin{prop}\label{2dtdig-normal-primepower}
Let \(T\) be a cyclic group of odd order \(n\ge 3\) and \(S\subseteq T\) with \(-S\cap S=\varnothing\) and \(T=\langle S\rangle\). Let \(\Gamma=\operatorname{Cay}(T,S)\) be a normal \(2\)-distance-transitive circulant digraph of valency at least \(2\). Then \(n\) is a power of an odd prime \(p\) with \(p\equiv 3\pmod{4}\).
\end{prop}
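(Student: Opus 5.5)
Since $\Gamma$ is normal, Lemmas \ref{cayley-normal} and \ref{lem:main-1} give $\Aut(\Gamma)=R(T)\rtimes K$ with $K=\Aut(T,S)$, and $S$ consists of generators of $T$. By Lemma \ref{1-in-s} we may assume $1\in S$, so that $S=K$ is a subgroup of $\mathbb Z_n^\times$ with $-1\notin K$. First we dispose of the case $\Gamma^+(0)\cap\Gamma^+(1)=\varnothing$. Then Proposition \ref{empty-equal-1} gives $\Gamma\cong C_r(m,1)$, and Lemma \ref{crv-not-circdig-1} shows this is not normal, a contradiction. Hence $K\subseteq K+K$. Since $\Gamma_2^+(0)$ is a single $K$-orbit $cK$ for some $c\in T$, we get $K+K=K\cup cK$.

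The reduction to a prime power goes by induction on the number of prime divisors of $n$, using projections. Suppose $n$ has two distinct prime divisors $p$ and $q$. Let $M\le T$ be the subgroup with $T/M\cong\mathbb Z_{pq}$. Then $M$ is characteristic in $T$, so it is normalised by $K$ and is normal in $\Aut(\Gamma)$. It has $pq\ge 15\ge3$ orbits, and we want to pass to the quotient $\Gamma_M=\Cay(\mathbb Z_{pq},\bar S)$. The plan is to show that this quotient is again a normal $2$-distance-transitive oriented circulant of valency at least $2$, and then contradict Proposition \ref{pq-notexit}.

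Concretely, the image $\bar K$ of $K$ in $\mathbb Z_{pq}^\times$ acts on $\bar S=\bar K$, and projecting gives $\bar K+\bar K=\bar K\cup\bar c\bar K$. We must check four things.
\begin{enumerate}[(a)]
\item $-1\notin\bar K$, i.e.\ $\bar S\cap-\bar S=\varnothing$. If $-1\in\bar K$, then $0\in\bar K+\bar K$, so $\bar c\in\bar M=0$, i.e.\ $c\in M$. This needs an argument that the distance-$2$ orbit cannot collapse.
\item $|\bar K|\ge2$.
\item $\bar c\bar K$ is disjoint from $\bar K$, so that $\Gamma_M$ is $2$-distance-transitive. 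If instead $\bar c\in\bar K$, then $\bar K+\bar K=\bar K$, which is impossible as in the proof of Lemma \ref{2notin-s}.
\item Normality of $\Gamma_M$. This is not needed for the contradiction: rereading Lemmas \ref{2notin-s}, \ref{2notindistance2} and \ref{pq-even}, their proofs only use that the stabiliser contains the multiplicative group $\bar K=\bar S$ acting transitively on $\bar S$ and on $\Gamma_2^+(0)$. So we apply those arguments directly to $\bar K$.
\end{enumerate}

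The main obstacle is (a) together with (b), that is, controlling degenerate projections. For (b), if $\bar K=1$ then $K\le 1+M$, so $K$ fixes all cosets of $M$. Then every out-neighbour of $0$ lies in $1+M$, contradicting $\langle S\rangle=T$ unless $M=0$. For (a), I would use that $-1\notin K$ forces $|K|$ odd once one excludes involutions $\pm1$ componentwise, exactly as in Lemma \ref{2notindistance2}. Here the involutions in $\mathbb Z_n^\times$ are sign vectors over the primary components, and the component-projection argument of Lemma \ref{pq-even} rules out each sign pattern: if $x\in K$ has a $-1$ in the $p$-component, then some $s+t$ is divisible by $p$, which yields a distance-$2$ vertex in the orbit of a non-unit, while the orbit of $2\in K+K$ is units. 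After this, an odd-order $K$ projects to an odd-order $\bar K$, so $-1\notin\bar K$.

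The contradiction shows that $n$ is a prime power $p^m$. Finally, for $p\equiv3\pmod4$: reducing modulo $p$ (the subgroup $pT$ is characteristic), the image $H=\pi_p(K)\le\mathbb F_p^\times$ has odd order, contains $2$ or has $2\in cH$, and satisfies $H+H=H\cup\pi_p(c)H$. Lemma \ref{lem:projection-classification} or Lemma \ref{lem:finite-field} then gives $|H|=(p-1)/2$ odd, hence $p\equiv3\pmod4$. The degenerate case $H=\mathbb F_p^\times$ is excluded since $-1\notin H$.
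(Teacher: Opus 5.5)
Your route differs from the paper's, and in outline it works. The paper argues with permutation groups: quasiprimitivity, the Jones--Li list, and normal quotients, followed by Propositions~\ref{pq-notexit} and \ref{circulant-p-1} applied to quotient digraphs. You instead push the identity $K+K=K\sqcup cK$ through $\mathbb Z_n\to\mathbb Z_{pq}$ and $\mathbb Z_{p^m}\to\mathbb F_p$, and rerun the arithmetic of Lemmas~\ref{2notin-s}, \ref{2notindistance2} and \ref{pq-even}. Your point (d) is correct. The prime-power step is also fine, since $|K|$ is odd because $\mathbb Z_{p^m}^\times$ is cyclic.

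The genuine gap is (a). Your proof that $|K|$ is odd works only when $2\in cK=\Gamma_2^+(0)$. An involution $x\in K$ that is $-1$ on the $p$-primary part does make $1+x\in cK$ a non-unit. But if $2\in S=K$, the orbit of $2$ is $K$ itself, and nothing clashes. This is not a technicality. It is exactly the configuration of Case~2 of Lemma~\ref{2notin-s}: there $\pi_p(K)=\mathbb F_p^\times$ and $K$ contains the involution $(-1,1)$, and the paper rules it out only through the additive relation $\phi(-1)+\phi(2)=\phi(1)$, not by counting units. So ``$|K|$ odd'' is not available at this stage. Without $\bar c\neq 0$, Case~1 of Lemma~\ref{2notin-s}, which needs $0\notin R$, gives no contradiction.

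You only need $\bar c\ne 0$, and that has a one-line proof. If $\bar c=0$, then $\bar K+\bar K=\bar K\cup\{0\}$, so $\bar K\cup\{0\}$ is closed under addition. It is therefore the additive subgroup generated by $1$, namely $\mathbb Z_{pq}$. This contradicts $\bar K\subseteq\mathbb Z_{pq}^\times$.

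The argument for (b) is also wrong as stated. $S\subseteq 1+M$ is fully compatible with $\langle S\rangle=T$, because $1\in S$; for instance, the connection set of $C_r(m,1)$ is the single coset $1+r\mathbb Z_{rm}$. Use your earlier conclusion $K\subseteq K+K$ instead. Projecting gives $\bar K\subseteq \bar K+\bar K$, and $\bar K=\{1\}$ would force $\bar 1=\bar 2$, i.e.\ $1\in M$.

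With these two repairs, (c) follows as you say. The projected data $\bar K$ then satisfy everything the arithmetic in Lemmas~\ref{2notin-s}, \ref{2notindistance2} and \ref{pq-even} actually uses, so your argument closes. It also has an advantage over the paper's: it never needs $\Gamma_M$ itself to be a normal $2$-distance-transitive Cayley digraph, which the paper asserts without proof.
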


\begin{proof}
Let \(G=\operatorname{Aut}(\Gamma)\). Then \(T\le G\) and \(T\) acts regularly on \(V(\Gamma)=T\).

We first consider the case where \(G\) is quasiprimitive on \(V(\Gamma)\). Since \(G\) is quasiprimitive and contains the cyclic regular subgroup \(T\),
by \cite[Theorem 1.2]{LP-circulant-2012},  $G$ is primitive on $V(\Gamma)$, and
Lemma \ref{primitive-cyclic-1} implies that \(G\) belongs to one of the cases (i)--(iv). If \(G\) lies in one of the cases (ii)--(iv), then \(G\) is \(2\)-transitive. As \(G= \operatorname{Aut}(\Gamma)\), this forces \(\Gamma\) to be a complete digraph: indeed, if \(\Gamma\) has an arc \(u\to v\), then for every ordered pair \((x,y)\) there exists \(g\in G\) with \(g(u)=x\) and \(g(v)=y\), so \(x\to y\) is an arc. Hence \(\Gamma\) would be a complete digraph, contradicting \(S\cap -S=\varnothing\) and \(n\ge 3\). Therefore \(G\) is in case (i), so \(n=p\) is a prime. By Proposition \ref{circulant-p-1}, \(p\equiv 3\pmod{4}\).

Now assume that \(G\) is not quasiprimitive on \(V(\Gamma)\). Let    $N$ be a nontrivial proper normal subgroup of $G$ with at least 2  orbits.
If $N$ has 2 orbits on $V(\Gamma)$, then as $\Gamma$ is arc-transitive, every $N$ does not contain any arc of $\Gamma$, and so $\Gamma$ is bipartite. Hence
$|V(\Gamma)|$ is even, a contradiction.  Thus $N$ has at least 3 orbits on $V(\Gamma)$, and  so  by Lemma \ref{2dtdig-quotient-nleqt}, $N\leq T$.
Let $N$ be a maximal such normal subgroup of $G$.

Since $S\cap -S=\varnothing$, we have $\Gamma\ncong \K_{m[b]}$ for any integers $m\geq 2,b\geq 3$, and so $\Gamma_N$ is not a complete digraph. By \cite[Lemma 5.3]{DevillersGiudiciLiPraeger2012}, \(N\) is semiregular on \(V(\Gamma)=T\), \(N\) is the kernel of the \(G\)-action on \(V(\Gamma_N)\), and \(\Gamma_N\) is either  a \((G/N,2)\)-distance-transitive circulant digraph or a complete digraph. In particular,
\[
|V(\Gamma_N)|=|T|/|N|.
\]
Since \(N\) is maximal among the normal subgroups with at least three orbits, the quotient \(G/N\) acts quasiprimitively or biquasiprimitively on \(V(\Gamma_N)\). If \(G/N\) were biquasiprimitive, then \(\Gamma_N\) would be bipartite and hence \(|V(\Gamma_N)|\) would be even; this would force \(n\) to be even, a contradiction. Thus \(G/N\) is quasiprimitive on \(V(\Gamma_N)\). By \cite[Theorem 1.2]{LP-circulant-2012}, every quasiprimitive group with a regular cyclic subgroup is primitive. Hence \(G/N\) is primitive on \(V(\Gamma_N)\).

Now \(T/N\) is a cyclic regular subgroup of \(G/N\). Applying Lemma \ref{primitive-cyclic-1} to the primitive group \(G/N\), we see that \(G/N\) is in one of the cases (i)--(iv). If \(G/N\) were in one of the cases (ii)--(iv), then \(G/N\) would be \(2\)-transitive, and  this  forces \(\Gamma_N\) to be a complete digraph, which is impossible. Therefore \(G/N\) must be in case (i). Hence \(|V(\Gamma_N)|=p\) is a prime, and consequently
\[
|T:N|=|T/N|=|V(\Gamma_N)|=p
\]
is a prime.

Now suppose, for a contradiction, that \(n\) has at least two distinct odd prime divisors. Let \(p,q\) be two such divisors. By the above, we may choose a maximal normal subgroup \(N\le T\) with \(|T:N|=p\). Then \(q\mid |N|\). Let \(M\le N\) be a subgroup with \(|N:M|=q\). If \(|N|=q\), then \(M=1\) and \(|T|=pq\). By Proposition \ref{pq-notexit}, \(\Gamma_M\) does not exist. Hence \(|N|\ne q\), so \(|T:M|=pq\). Since \(N\) is cyclic, \(M\) is characteristic in \(N\), and hence \(M\trianglelefteq G\). Thus \(\Gamma_M\) has order \(pq\). Moreover, \(M\unlhd G\), \(M\le N\le T\), so \(T/M\unlhd G/M\), and \(\Gamma_M\) is \(G/M\)-normal. By Proposition \ref{pq-notexit}, \(\Gamma_M\) does not exist. This contradiction shows that \(n\) is a power of a single odd prime \(p\), say \(n=p^k\).

If \(k=1\), then Proposition \ref{circulant-p-1} gives \(p\equiv 3\pmod{4}\). If \(k\ge 2\), choose \(N\le T\) with \(|T:N|=p\). Then \(\Gamma_N\) is normal and has \(p\) vertices, so Proposition \ref{circulant-p-1} again yields \(p\equiv 3\pmod{4}\). This completes the proof.
\end{proof}

\begin{lemma}\label{2d-1-orbit}
Let \(p\) be an odd prime and \(m \ge 1\). Let \(H_r\) be a multiplicative subgroup of \(\mathbb{Z}_{p^m}^\times\) of order \(r\), where \(r \mid p-1\). Suppose that
\(H_r \cap (-H_r) = \varnothing
\)
and
\(
\left| (H_r + H_r) \setminus H_r \right| = r.
\)
Then the set
\(
S := (H_r + H_r) \setminus H_r
\)
is a single orbit under the multiplicative action of \(H_r\) on \(\mathbb{Z}_{p^m}\).
\end{lemma}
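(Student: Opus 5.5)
The plan is a counting argument. First show that $S$ is a union of $H_r$-orbits, all of full size $r$. Then the hypothesis $|S|=r$ forces $S$ to be a single orbit. We argue in three steps.

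\emph{Step 1: $S$ is $H_r$-invariant.} For $h\in H_r$ we have $h(H_r+H_r)=hH_r+hH_r=H_r+H_r$, since multiplication by a unit is additive and $hH_r=H_r$. Likewise $hH_r=H_r$. Hence $hS=S$, so $S$ is a union of $H_r$-orbits on $\mathbb{Z}_{p^m}$.

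\emph{Step 2: $0\notin S$.} If $h_1+h_2=0$ with $h_1,h_2\in H_r$, then $h_2=-h_1\in H_r\cap(-H_r)$. This contradicts the hypothesis $H_r\cap(-H_r)=\varnothing$, so $0\notin H_r+H_r$ and therefore $0\notin S$.

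\emph{Step 3: every nonzero $H_r$-orbit has size exactly $r$.} This is the key point. Let $x\neq 0$ and suppose $hx=x$ for some $h\in H_r$, so $(h-1)x=0$. We claim $h-1$ is a unit whenever $h\neq 1$.

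\begin{itemize}
\item The kernel of the reduction map $\mathbb{Z}_{p^m}^\times\to\mathbb{F}_p^\times$ is $1+p\mathbb{Z}_{p^m}$, which has order $p^{m-1}$.
\item Since $r\mid p-1$, we have $\gcd(r,p)=1$, so $H_r\cap(1+p\mathbb{Z}_{p^m})=1$.
\item Hence $h\not\equiv 1\pmod p$ for $h\neq 1$, i.e.\ $h-1$ is a unit in $\mathbb{Z}_{p^m}$.
\end{itemize}

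Then $(h-1)x=0$ forces $x=0$, a contradiction. So $H_r$ acts semiregularly on $\mathbb{Z}_{p^m}\setminus\{0\}$, and every orbit there has size $r$.

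\emph{Conclusion.} By Steps 1--3, $S$ is a union of $H_r$-orbits each of size $r$. Since $|S|=r>0$, $S$ consists of exactly one such orbit.

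The only nontrivial step is Step 3, the unit property of $h-1$, which comes from the order comparison between $H_r$ and the kernel $1+p\mathbb{Z}_{p^m}$.
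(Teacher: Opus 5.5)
Your proof is correct and follows the same route as the paper: you show $S$ is $H_r$-invariant, show $H_r$ acts freely on it, and then count using $|S|=r$. The only difference is the freeness step: you show $h-1$ is a unit for $h\neq 1$, so $H_r$ is semiregular on all nonzero residues and $-1\notin H_r$ is needed only to exclude $0$ from $S$; the paper instead shows every element of $H_r+H_r$ is a unit and cancels $x$. Both rest on the reduction map $\mathbb{Z}_{p^m}^\times\to\mathbb{F}_p^\times$ being injective on $H_r$.
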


\begin{proof}
Let \(H_r \le \mathbb{Z}_{p^m}^\times\) be as in the statement. Since \(r \mid p-1\), the group \(H_r\) is contained in the unique subgroup of \(\mathbb{Z}_{p^m}^\times\) of order \(p-1\), which is the Teichm\"uller lift of \(\mathbb{F}_p^\times\). In particular, the reduction map
\[
H_r \to \overline{H_r} \le \mathbb{F}_p^\times
\]
is an isomorphism.

\smallskip
\noindent
\textbf{Step 1.} We first prove that every element of \(H_r + H_r\) is a unit modulo \(p^m\).

Assume, for contradiction, that there exist \(h_1, h_2 \in H_r\) such that
\[
p \mid (h_1 + h_2).
\]
Reducing modulo \(p\), we get
\(
\overline{h_1} + \overline{h_2} = 0 \quad \text{in } \mathbb{F}_p.
\)
Thus
\(
\overline{h_1} = -\overline{h_2},
\)
which implies
\(
-1 = \overline{h_1}\,\overline{h_2}^{-1} \in \overline{H_r}.
\)
Since the reduction map is an isomorphism, we would have \(-1 \in H_r\). But this contradicts the assumption \(H_r \cap (-H_r) = \varnothing\). Therefore, for all \(h_1,h_2\in H_r\), the sum \(h_1+h_2\) is not divisible by \(p\), hence \(h_1+h_2 \in \mathbb{Z}_{p^m}^\times\).

\smallskip
\noindent
\textbf{Step 2.} The set \(S = (H_r + H_r) \setminus H_r\) is invariant under multiplication by \(H_r\).

Indeed, for any \(g \in H_r\),
\[
g(H_r + H_r) = gH_r + gH_r = H_r + H_r,
\]
and also \(gH_r = H_r\). Hence \(gS = S\).

\smallskip
\noindent
\textbf{Step 3.} The action of \(H_r\) on \(S\) is free.

Let \(g \in H_r\) and suppose \(gx = x\) for some \(x \in S\). Then
\[
(g-1)x = 0 \quad \text{in } \mathbb{Z}_{p^m}.
\]
By Step 1, \(x\) is a unit, so we can cancel \(x\) and obtain \(g-1=0\), i.e.\ \(g=1\). Hence the stabilizer of every element of \(S\) is trivial, and every orbit has size \(|H_r| = r\).

\smallskip
\noindent
\textbf{Step 4.} Since \(|S| = r\) and \(S\) is a disjoint union of orbits each of size \(r\), there can be only one such orbit. Therefore
\[
S = H_r \, x
\]
for some \(x \in S\), and \(S\) is a single \(H_r\)-orbit.
\end{proof}

T

Let $p$ be an odd prime, $m\geq 1$ an integer, and $r$ a positive divisor of $p-1$ such that $(p^m,r)\neq (p,p-1)$. The digraph $G(p^m,r)$ is defined as the circulant digraph
\[
G(p^m,r) = \operatorname{Cay}(\mathbb{Z}_{p^m}, H_r),
\]
where $H_r$ is the unique subgroup of the multiplicative group $\mathbb{Z}_{p^m}^\times$ of order $r$.

\begin{cond}\label{p-power-normal-2dt-cond}
Let \(\Gamma=G(p^m,r)=\operatorname{Cay}(\mathbb Z_{p^m},H_r)\), where
\(p\equiv 3\pmod 4\) is an odd prime, \(m\ge 1\), \(r\) is an odd divisor
of \(p-1\) satisfying the following three properties:
\[
H_r\cap(-H_r)=\varnothing,\qquad
H_r\cap(H_r+H_r)\neq\varnothing,\qquad
|H_r+H_r|=2r.
\]
\end{cond}

\begin{prop}\label{p-power-normal-2dt}
Let \(\Gamma=G(p^m,r)=\operatorname{Cay}(\mathbb Z_{p^m},H_r)\), where
\(p\) is an odd prime, \(m\ge 1\), and \(r\ge 2\) divides \(p-1\) with   \((p^m,r)\neq (p,p-1)\).
Assume further that \(H_r\cap(-H_r)=\varnothing\).
Then \(\Gamma\) is \(2\)-distance-transitive if and only if \(\Gamma\) satisfies Condition~\ref{p-power-normal-2dt-cond}.
\end{prop}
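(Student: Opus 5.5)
The plan is to reduce everything to the action of the vertex stabilizer. By Theorem~\ref{at-primepower-normalcirc}(2)--(3), the hypothesis $(p^m,r)\neq(p,p-1)$ gives $\Gamma$ normal with
\[
A=\Aut(\Gamma)=\mathbb Z_{p^m}\rtimes H_r ,
\]
so $A_0=H_r$ acts on $\mathbb Z_{p^m}$ by multiplication. Since $R(T)$ is transitive, $\Gamma$ is $2$-distance-transitive if and only if $H_r$ is transitive on $\Gamma_1^+(0)=H_r$ (automatic) and on $\Gamma_2^+(0)$.

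Next I describe $\Gamma_2^+(0)$. By Step~1 of Lemma~\ref{2d-1-orbit}, $H_r\cap(-H_r)=\varnothing$ gives $H_r+H_r\subseteq\mathbb Z_{p^m}^\times$; in particular $0\notin H_r+H_r$. Hence
\[
\Gamma_2^+(0)=(H_r+H_r)\setminus H_r .
\]
Since multiplication by $H_r$ is free on units, every $H_r$-orbit inside $\Gamma_2^+(0)$ has size exactly $r$. Also $H_r$ is cyclic inside the cyclic group $\mathbb Z_{p^m}^\times$, whose unique involution is $-1$; so $-1\notin H_r$ forces $r$ to be odd.

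For the direction ``Condition $\Rightarrow$ $2$-distance-transitive'', I use the condition $H_r\cap(H_r+H_r)\neq\varnothing$. This intersection is invariant under multiplication by $H_r$, and $H_r$ is a single orbit of itself, so $H_r\subseteq H_r+H_r$. Combined with $|H_r+H_r|=2r$, this gives $|(H_r+H_r)\setminus H_r|=r$. Lemma~\ref{2d-1-orbit} then says $\Gamma_2^+(0)$ is a single $H_r$-orbit, which finishes this direction.

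For the converse, assume $\Gamma$ is $2$-distance-transitive.
\begin{enumerate}[(a)]
\item $\Gamma_2^+(0)$ is one $H_r$-orbit of units, so $|\Gamma_2^+(0)|=r$.
\item $r$ is odd by the parity observation above.
\item $p\equiv 3\pmod 4$ follows from Proposition~\ref{2dtdig-normal-primepower}, since $\Gamma$ is normal of odd order with valency $r\ge 2$.
\item The remaining point is $H_r\cap(H_r+H_r)\neq\varnothing$. Suppose it were empty. Then $\Gamma^+(0)\cap\Gamma^+(1)=H_r\cap(1+H_r)=\varnothing$. Here $\Gamma$ has diameter at least $2$ because it is an oriented graph on $p^m\ge 3$ vertices, so it is not complete. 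Proposition~\ref{empty-equal-1} then yields $\Gamma\cong C_{r'}(m',1)$, which is non-normal by Lemma~\ref{crv-not-circdig-1}, contradicting the normality of $\Gamma$.
\end{enumerate}
Hence $H_r\subseteq H_r+H_r$, and so $|H_r+H_r|=r+|\Gamma_2^+(0)|=2r$. Together with (b) and (c), Condition~\ref{p-power-normal-2dt-cond} holds.

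The main obstacle is step (d). The additive-subgroup argument of Lemma~\ref{2notindistance2} does not transfer to prime-power order, because $1+p\mathbb Z_{p^m}$ consists of units. For this reason I would route the argument through Proposition~\ref{empty-equal-1}, which relies on normality, rather than through arithmetic in $\mathbb Z_{p^m}$.
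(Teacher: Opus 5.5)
Your proposal is correct and follows essentially the same route as the paper. Sufficiency uses $H_r\subseteq H_r+H_r$ together with Lemma~\ref{2d-1-orbit}; necessity uses the free action of $H_r$ on units to get $|\Gamma_2^+(0)|=r$, Proposition~\ref{2dtdig-normal-primepower} for $p\equiv 3\pmod 4$, and Proposition~\ref{empty-equal-1} with Lemma~\ref{crv-not-circdig-1} to exclude $H_r\cap(H_r+H_r)=\varnothing$. The only cosmetic difference is that you fix the arc $(0,1)$ and feed the disjointness straight into Proposition~\ref{empty-equal-1}, after checking diameter at least $2$, whereas the paper first derives $\Gamma_2^+(0)=\Gamma^+(v)$ itself.
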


\begin{proof}
Consider the Cayley digraph   \(\Gamma=G(p^m,r)=\operatorname{Cay}(\mathbb Z_{p^m},H_r)\). The condition
\(H_r\cap(-H_r)=\varnothing\) implies \(-1\notin H_r\), which readily yields that \(r\) is odd.
By Theorem~\ref{at-primepower-normalcirc}, the digraph \(G(p^m,r)\) is
a normal, arc-transitive circulant digraph of order \(p^m\), with both in-valency and out-valency
equal to \(r\). Its automorphism group admits the structure
\[
A=\operatorname{Aut}(G(p^m,r))\cong \mathbb Z_{p^m}\rtimes H_r,
\]
which acts regularly on the arc set of $\Gamma$.

\medskip
\noindent\textbf{Sufficiency.}
Suppose that \(\Gamma\) satisfies Condition~\ref{p-power-normal-2dt-cond}.
Then \(H_r\cap(H_r+H_r)\neq\varnothing\). As  \(H_r\) forms   a multiplicative
subgroup, this nonempty intersection  implies \(H_r\subseteq H_r+H_r\). Indeed, if
\(x\in H_r\cap(H_r+H_r)\), say \(x=h_1+h_2\) with \(h_1,h_2\in H_r\), then for
any \(g\in H_r\),
\[
g=(gx^{-1})x=(gx^{-1})h_1+(gx^{-1})h_2\in H_r+H_r.
\]
Thus
\[
|(H_r+H_r)\setminus H_r|
=|H_r+H_r|-|H_r|
=2r-r=r.
\]
By Lemma~\ref{2d-1-orbit}, the set \((H_r+H_r)\setminus H_r\) constitutes  a single orbit
under the multiplicative action of \(H_r\) on \(\mathbb Z_{p^m}\). Accordingly,  the
vertex stabilizer \(A_u\cong H_r\) acts transitively on the set of vertices at directed distance \(2\)
from \(u\). Combined with the arc-transitivity of  \(\Gamma\), this guarantees  that \(\Gamma\) is
\(2\)-distance-transitive.

\medskip
\noindent\textbf{Necessity.}
Conversely, assume   that \(\Gamma\) is \(2\)-distance-transitive. As a normal circulant digraph,  $\Gamma$ satisfies  \(p\equiv 3\pmod 4\) by    Proposition \ref{2dtdig-normal-primepower}. Let \(u=0\) be   the
identity element of \(\mathbb Z_{p^m}\). Since \(A_u\cong H_r\) and \(A\) acts
regularly on the arc set, \(A_u\) acts regularly on the out-neighbourhood
\(\Gamma^+(0)=H_r\).

We now claim that \(A_u\)  acts regularly on
\(\Gamma_2^+(0)\), the set of vertices at directed distance \(2\) from \(u\).
It is straightforward to see the set inclusion
\[
\Gamma_2^+(0)\subseteq (H_r+H_r)\setminus H_r.
\]
By Lemma~\ref{2d-1-orbit} and  its proof, the assumption
\(H_r\cap(-H_r)=\varnothing\) ensures  that every element of \(H_r+H_r\) is a
unit in \(\mathbb Z_{p^m}\). Suppose  \(g\in H_r\) and
\(x\in\Gamma_2^+(0)\) satisfy \(gx=x\). Then \((g-1)x=0\), and since \(x\) is a
unit, we obtain \(g=1\). Because \(\Gamma\) is \(2\)-distance-transitive,
\(A_u\) acts transitively on \(\Gamma_2^+(0)\), and so the action is regular.
Consequently,
\[
|\Gamma_2^+(0)|=|H_r|=r.
\]

Now choose \(v\in\Gamma^+(0)=H_r\). We proceed by considering two cases based on
whether \(H_r\cap(H_r+H_r)\) is empty.

First, suppose that \(H_r\cap(H_r+H_r)=\varnothing\). Then for any  \(v\in H_r\), we
have \(H_r\cap(v+H_r)=\varnothing\); otherwise, multiplying by \(v^{-1}\)
would yield \(H_r\cap(1+H_r)\neq\varnothing\), which is equivalent to
\(H_r\cap(H_r+H_r)\neq\varnothing\). Hence
\[
\Gamma^+(0)\cap \Gamma^+(v)=H_r\cap(v+H_r)=\varnothing.
\]
Thus every out-neighbour of \(v\) lies at distance \(2\) from \(u\), and so
\[
\Gamma^+(v)\subseteq \Gamma_2^+(0).
\]
Since \(|\Gamma^+(v)|=r=|\Gamma_2^+(0)|\), we obtain
\[
\Gamma_2^+(0)=\Gamma^+(v).
\]
By Proposition~\ref{empty-equal-1}, this forces
\(\Gamma\cong C_s(n,1)\) for some integers \(n\ge 2\), \(s\ge 3\), with
\(ns=p^m\). However, by Lemma~\ref{crv-not-circdig-1}, \(C_s(n,1)\) is not
normal, contradicting that \(\Gamma\) is normal.

Therefore \(H_r\cap(H_r+H_r)\neq\varnothing\). Then, as in the sufficiency
part, the nonempty intersection implies \(H_r\subseteq H_r+H_r\). Moreover,
every vertex at distance \(2\) from \(u\) is a sum of two elements of \(H_r\)
and does not belong to \(H_r\), hence
\[
\Gamma_2^+(0)=(H_r+H_r)\setminus H_r.
\]
Therefore
\[
|(H_r+H_r)\setminus H_r|=|\Gamma_2^+(0)|=r.
\]
Since \(H_r\subseteq H_r+H_r\), we get
\[
|H_r+H_r|=|H_r|+r=2r.
\]

 Thus Condition~\ref{p-power-normal-2dt-cond} holds.
This completes the proof.
\end{proof}

\begin{theo}\label{2dt-normal-them-1}
Let \(\Gamma=\operatorname{Cay}(T,S)\) be a \(2\)-distance-transitive circulant digraph, where \(T\cong \mathbb{Z}_n\), \(S\subseteq T\setminus\{0\}\), \(-S\cap S=\varnothing\), and \(\langle S\rangle=T\). Assume that \(R(T)\) is normal in \(\operatorname{Aut}(\Gamma)\). Then \(\Gamma\) is isomorphic to one of the following oriented graphs:
\begin{enumerate}[{\rm (1)}]
    \item \(\overrightarrow{C_n}\);
    \item \(G(p^m,r)\) satisfying Condition~\ref{p-power-normal-2dt-cond}.
\end{enumerate}
\end{theo}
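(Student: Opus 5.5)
The plan is to split by valency and by the parity of $n$, and then assemble the results already proved in Sections 3 and 4. If $|S|=1$, then $S=\{s\}$ with $\langle s\rangle=T$, so $\Gamma\cong\overrightarrow{C_n}$ and we are in case (1). From now on assume $|S|\ge 2$. If $n$ is even, Proposition~\ref{lem:even-3} applies directly, since $R(T)$ is normal, $-S\cap S=\varnothing$ and $\Gamma$ is $2$-distance-transitive. It forces $\Gamma\cong\overrightarrow{C_n}$, which contradicts valency at least $2$. So in the even case only (1) occurs.

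Now let $n$ be odd and $|S|\ge 2$. By Proposition~\ref{2dtdig-normal-primepower}, $n=p^m$ for an odd prime $p\equiv 3\pmod 4$. Since $\Gamma$ is arc-transitive and normal, Lemma~\ref{lem:main-1}(1) shows that every element of $S$ is a unit. By Lemma~\ref{1-in-s} we may then assume $1\in S$, and Lemma~\ref{lem:main-1}(4) gives that $S$ is a multiplicative subgroup of $\mathbb Z_{p^m}^\times$ of order $r=|S|$. Moreover $S=\Aut(T,S)$ acts regularly on $S$. Alternatively, one can invoke Theorem~\ref{at-primepower-normalcirc}(2): a normal arc-transitive circulant of order $p^m$ is isomorphic to $G(p^m,r)$ for some divisor $r$ of $p-1$ with $(p^m,r)\ne(p,p-1)$. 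Either way, $\Gamma\cong G(p^m,r)$ with $r\ge 2$.

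The isomorphism carries $-S\cap S=\varnothing$ to $H_r\cap(-H_r)=\varnothing$. For this I will note that the isomorphism $x\mapsto a^{-1}x$ is a group automorphism, so it commutes with negation. Proposition~\ref{p-power-normal-2dt} then shows that $2$-distance-transitivity is equivalent to Condition~\ref{p-power-normal-2dt-cond}, which yields case (2).

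The main obstacle is the identification $S=H_r$. Lemma~\ref{lem:main-1} only shows that $S$ is some subgroup of order $r$ of $\mathbb Z_{p^m}^\times$, and I must still check that $r\mid p-1$, i.e.\ that the subgroup lies in the Teichm\"uller part. The cleanest route is to appeal to Theorem~\ref{at-primepower-normalcirc}(2). If a direct argument is wanted instead, I would take an element of order $p$ in $S$, namely $1+p^{m-1}k$. It acts trivially modulo $p^{m-1}$, so its $S$-orbit on $1$ lies in a coset of $p^{m-1}\mathbb Z$. Using this, I would contradict either the transitivity on $\Gamma_2^+(0)$ or the conditions $-S\cap S=\varnothing$ and $\langle S\rangle=T$. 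I also need to exclude $(p^m,r)=(p,p-1)$, which is immediate because $-1\notin S$.
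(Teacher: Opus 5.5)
Your proposal is correct and follows essentially the same route as the paper: dispose of $|S|=1$, handle even $n$ with Proposition~\ref{lem:even-3}, reduce odd $n$ to $n=p^m$ with $p\equiv 3\pmod 4$ via Proposition~\ref{2dtdig-normal-primepower}, and identify $\Gamma$ with $G(p^m,r)$ via Theorem~\ref{at-primepower-normalcirc}(2). The paper uses that theorem in exactly the same way to settle the $r\mid p-1$ issue you flag, and then concludes with Proposition~\ref{p-power-normal-2dt}. Your remark that the even case with $|S|\ge 2$ is actually vacuous is a harmless sharpening of the paper's wording.
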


\begin{proof}
If \(|S|=1\), then \(\Gamma\cong \overrightarrow{C_n}\). Assume now that \(|S|\ge 2\). Since \(\Gamma\) is a \(2\)-distance-transitive circulant digraph and \(R(T)\trianglelefteq \operatorname{Aut}(\Gamma)\), if \(n\) is even, then Proposition \ref{lem:even-3} implies that  \(\Gamma\) is isomorphic to $\overrightarrow{C_n}$.

If \(n\) is odd, then Proposition \ref{2dtdig-normal-primepower} yields \(n=p^m\) for some odd prime \(p\) with \(p\equiv 3\pmod{4}\) and some \(m\ge 1\).

By Theorem \ref{at-primepower-normalcirc}, \(\Gamma\cong G(p^m,r)\cong \operatorname{Cay}(\mathbb{Z}_{p^m},S)\) for some divisor \(r\) of \(p-1\), where \(S=H_r\) is the unique subgroup of \(\mathbb{Z}_{p^m}^\times\) of order \(r\). Since \(-S\cap S=\varnothing\), \(G(p^m,r)\) is directed, applying Theorem \ref{at-primepower-normalcirc} again shows that \(r\) is odd.

Since \(\Gamma=G(p^m,r)\cong \operatorname{Cay}(T,S)\) is \(2\)-distance-transitive and \(S\cap(-S)=\varnothing\), Proposition \ref{p-power-normal-2dt} implies that \(\Gamma\) satisfies Condition~\ref{p-power-normal-2dt-cond}.
\end{proof}

\bigskip

\bigskip

\bigskip

\section{Proof of Main Theorem}

In this section, we establish the main theorem of this paper.
We use $\K_n$ and $\overline{\K_n}$ to denote the complete graph and the edgeless graph of order $n$, respectively.

The following proposition indicates that
every  connected \(2\)-distance-transitive circulant oriented graph is isomorphic to the lexicographic product of  a smaller   normal \(2\)-distance-transitive circulant oriented graph and  an edgeless graph.

\begin{prop}\label{prop:no-Kn1-factor}
Let \(\Gamma\) be a connected \(2\)-distance-transitive circulant oriented graph. Consider the decomposition
\(
\Gamma\cong
(\Gamma_0\times \K_{n_1}\times\cdots\times \K_{n_r})[\overline{\K}_b]
\)
yielded by  Theorem~\ref{arccirculant-explicit-1}. Then \(r=0\), which implies
\(
\Gamma\cong \Gamma_0[\overline{\K}_b],
\)
where \(\Gamma_0\) is a normal  \(2\)-distance-transitive circulant oriented graph.
\end{prop}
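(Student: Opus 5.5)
We will argue by contradiction: suppose \(r\ge 1\), so that \(n_1\ge 4\) occurs as a factor. Write \(H=\Gamma_0\times \K_{n_1}\times\cdots\times \K_{n_r}\), so that \(\Gamma\cong H[\overline{\K}_b]\). The first paragraph of the proof of Lemma~\ref{prop:Gamma0-2DT} shows that \(H\) is \(2\)-distance-transitive. Moreover \(\Aut(H)\cong\Aut(\Gamma_0)\times S_{n_1}\times\cdots\times S_{n_r}\) by Theorem~\ref{arccirculant-explicit-1}. Since \(\Gamma\) is oriented, \(H\) is oriented, and hence \(\Gamma_0\) is oriented: an arc \((u,v)\) of \(\Gamma_0\) together with its reverse would lift to a \(2\)-cycle in \(H\), because \(\K_{n_i}\) is undirected.

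\textbf{Main step.} We will exhibit two ordered pairs at distance \(2\) in \(H\) that no automorphism of this product form can interchange. Fix \(u\in V(\Gamma_0)\) and a vertex \(w\) with \(d_{\Gamma_0}(u,w)=2\); such a \(w\) exists because \(\Gamma_0\) is oriented and has at least three vertices. Consider the two pairs
\[
\bigl((u,0,\ldots,0),(w,1,0,\ldots,0)\bigr)\quad\text{and}\quad\bigl((u,0,\ldots,0),(w,0,0,\ldots,0)\bigr),
\]
which differ only in the \(\K_{n_1}\)-coordinate. Both are at distance \(2\) in \(H\). To lift a path \(u\to x\to w\) of \(\Gamma_0\), we choose the first-coordinate values along the path as \(0\to 2\to 1\) in the first case and \(0\to 1\to 0\) in the second; this uses \(n_1\ge 3\). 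Neither pair is at distance \(1\), since \(w\notin\Gamma_0^+(u)\), and neither is at distance \(0\). An automorphism \(\phi=(\alpha,\sigma_1,\ldots,\sigma_r)\) fixing the first vertex must have \(\sigma_1(0)=0\), and therefore cannot map \(1\) to \(0\). So no automorphism carries the first pair to the second, contradicting the \(2\)-distance-transitivity of \(H\). Hence \(r=0\).

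\textbf{Conclusion.} With \(r=0\), Theorem~\ref{arccirculant-explicit-1} gives \(\Gamma\cong\Gamma_0[\overline{\K}_b]\), where \(\Gamma_0\) is a normal connected arc-transitive circulant. Lemma~\ref{prop:Gamma0-2DT} shows that \(\Gamma_0\) is \(2\)-distance-transitive. It remains to check that \(\Gamma_0\) is oriented: if \(\Gamma_0\) contained a pair of opposite arcs, then so would \(\Gamma_0[\overline{\K}_b]\), which is impossible.

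\textbf{Main obstacle.} The step needing the most care is confirming that the product-form description of \(\Aut(H)\) really applies, in particular that the coordinate action on each \(\K_{n_i}\) factor is separate, as recorded in Theorem~\ref{arccirculant-explicit-1}(5). It also requires checking that the distance-\(2\) computations in \(H\) hold even when \(\Gamma_0\) is small, for example \(\Gamma_0\cong\overrightarrow{C_3}\).
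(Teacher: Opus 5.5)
Your proposal is correct and is essentially the paper's proof. Both arguments show that $\Gamma_0$ is oriented in the same way, and both use the same two distance-$2$ pairs starting at $(u,0,\dots,0)$, which differ only in whether the $\K_{n_1}$-coordinate of the endpoint is $0$ or $1$. Both then observe that a coordinatewise automorphism fixing the start vertex must fix $0$ in that coordinate, so it cannot carry one pair to the other.

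The obstacle you flag about $\Aut(H)$ is harmless. Since $\Aut(\Gamma_0)\times S_{n_1}\times\cdots\times S_{n_r}\le\Aut(H)$ and $S_b\wr\Aut(H)\le\Aut(\Gamma)$, comparing orders with Theorem~\ref{arccirculant-explicit-1}(5) forces these to be equalities of permutation groups.

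Your claim that $|V(\Gamma_0)|\ge 3$, which the paper also leaves implicit, is easily justified. The digraph $\Gamma_0$ must carry an arc, since otherwise $\Gamma$ would have no arcs and could not be connected. An oriented, strongly connected digraph with an arc has at least three vertices.
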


\begin{proof}
By Lemma \ref{prop:Gamma0-2DT}, \(\Gamma_0\) is \(2\)-distance-transitive.

Assume for contradiction that \(r\ge 1\). Put
\[
H=\Gamma_0\times \K_{n_1}\times\cdots\times \K_{n_r},
\]
so that
\(
\Gamma\cong H[\overline{\K}_b].
\)

First, \(H\) has no \(2\)-cycle. Indeed, if \(h\to h'\to h\) were a \(2\)-cycle in \(H\), then for every \(i\in V(\overline{\K}_b)\) we would have
\[
(h,i)\to (h',i)\to (h,i)
\]
in \(\Gamma=H[\overline{\K}_b]\), contradicting the assumption that \(\Gamma\) has no \(2\)-cycle.

Since \(r\ge 1\), the direct product \(H\) contains the factor \(\K_{n_1}\), which is a complete digraph and hence has a \(2\)-cycle. In a direct product of digraphs, a \(2\)-cycle exists if and only if every factor has a \(2\)-cycle. Thus, because \(H\) has no \(2\)-cycle, the factor \(\Gamma_0\) has no \(2\)-cycle.

The digraph \(\Gamma_0\) is connected by Theorem~\ref{arccirculant-explicit-1}. Since   \(\Gamma_0\) has no \(2\)-cycle, it is not a complete digraph. Hence there exist \(u,v\in V(\Gamma_0)\) such that \(u\not\to v\). By connectedness, there is a directed path from \(u\) to \(v\). Choose a shortest such path, say
\[
u=u_0\to u_1\to \cdots \to u_k=v.
\]
Since \(u\not\to v\), we have \(k\ge 2\). Then
\[
u_0\to u_1\to u_2,
\]
and \(u_0\not\to u_2\); otherwise the path could be shortened. Thus \((u_0,u_2)\) is an ordered pair at distance \(2\) in \(\Gamma_0\). Renaming \(u_0,u_1,u_2\) as \(u,w,v\), we have
\[
u\to w\to v
\]
and \(u\not\to v\) in \(\Gamma_0\).

Fix such \(u,w,v\). Since \(n_i\ge 4\), for every \(i=1,\dots,r\) and every \(a,b\in V(\K_{n_i})\), there exists \(c\in V(\K_{n_i})\) such that
\[
a\to c\to b
\]
in \(\K_{n_i}\). Therefore, in \(H\), for any \(a,b\in V(\K_{n_1})\) and any fixed coordinates in the other factors, the vertices
\[
x=(u,a,x_2,\dots,x_r),\qquad
y=(v,b,y_2,\dots,y_r)
\]
satisfy
\[
x\to (w,c,z_2,\dots,z_r)\to y
\]
for suitable \(c,z_2,\dots,z_r\). Moreover, \(x\not\to y\), because \(u\not\to v\) in \(\Gamma_0\). Thus \((x,y)\) is an ordered pair at distance \(2\) in \(H\).

Now take
\[
x_0=(u,0,0,\dots,0),\qquad y_0=(v,0,0,\dots,0),
\]
and
\[
x_1=(u,0,0,\dots,0),\qquad y_1=(v,1,0,\dots,0).
\]
By the previous paragraph, both \((x_0,y_0)\) and \((x_1,y_1)\) are ordered pairs at distance \(2\) in \(H\). In the first pair the \(\K_{n_1}\)-coordinates are equal, while in the second pair they are distinct.

By Theorem~\ref{arccirculant-explicit-1},
\[
\operatorname{Aut}(\Gamma)\cong
S_b\wr
\bigl(\operatorname{Aut}(\Gamma_0)\times S_{n_1}\times\cdots\times S_{n_r}\bigr).
\]
Since \(\Gamma=H[\overline{\K}_b]\), its automorphism group is \(S_b\wr \operatorname{Aut}(H)\). Comparing the base groups gives
\[
\operatorname{Aut}(H)\cong
\operatorname{Aut}(\Gamma_0)\times S_{n_1}\times\cdots\times S_{n_r}.
\]
Suppose that \((x_0,y_0)\) and \((x_1,y_1)\) lie in the same orbit under \(\operatorname{Aut}(H)\). Then there is \(\varphi\in\operatorname{Aut}(H)\) such that
\[
\varphi(x_0)=x_1,\qquad \varphi(y_0)=y_1.
\]
But \(x_0=x_1\), so \(\varphi\) fixes \(x_0\). Write
\[
\varphi=(\varphi_0,\sigma_1,\dots,\sigma_r)
\in
\operatorname{Aut}(\Gamma_0)\times S_{n_1}\times\cdots\times S_{n_r}.
\]
Since \(\varphi(x_0)=x_0\), we have \(\varphi_0(u)=u\) and \(\sigma_1(0)=0\). Hence
\[
\varphi(y_0)=(\varphi_0(v),\sigma_1(0),0,\dots,0)
=(\varphi_0(v),0,0,\dots,0),
\]
which cannot equal \(y_1=(v,1,0,\dots,0)\). This contradiction shows that \((x_0,y_0)\) and \((x_1,y_1)\) are not in the same orbit under \(\operatorname{Aut}(H)\).

On the other hand, \(H\) is \(2\)-distance-transitive. Indeed, if two ordered pairs at distance \(2\) in \(H\) were not in the same orbit under \(\operatorname{Aut}(H)\), then taking the corresponding pairs inside a single fibre of \(\Gamma=H[\overline{\K}_b]\) would give two ordered pairs at distance \(2\) in \(\Gamma\) that are not in the same orbit under \(\operatorname{Aut}(\Gamma)\cong S_b\wr \operatorname{Aut}(H)\). This contradicts the \(2\)-distance-transitivity of \(\Gamma\). Thus \(H\) is \(2\)-distance-transitive.

This contradicts the conclusion of the previous paragraph. Therefore the assumption \(r\ge 1\) is impossible, so \(r=0\). Consequently,
\(
\Gamma\cong \Gamma_0[\overline{\K}_b],
\)
and \(\Gamma_0\) is \(2\)-distance-transitive. This proves the proposition.
\end{proof}

\medskip

We are now ready to prove Theorem~\ref{th-2dt-circ}.

\medskip
\noindent{\bf Proof of Theorem~\ref{th-2dt-circ}.}
Let $T$ be a cyclic group of order $n\ge 3$ and let $S\subseteq T$ satisfy $T=\langle S\rangle$. Let $\Gamma=\operatorname{Cay}(T,S)$ be a 2-distance-transitive  circulant digraph. Then $\Gamma$ is connected.

The right regular representation $R(T)\leq G=\Aut(\Gamma)$ acts  transitively on $V(\Gamma)$, so $\Gamma$ is vertex-transitive. Let $u=0_T$ be the identity element of $T$. Then $\Gamma^+(u)=S$. Since $\Gamma$ is 2-distance-transitive, $G_u$ acts transitively  on $\Gamma^+(u)$,   and hence $\Gamma$ is arc-transitive. Therefore either $S=-S$ or $S\cap -S=\varnothing$.

If  $S=-S$, then by \cite{CJL-2019},   $\Gamma$ is isomorphic to one of the following graphs:
the cycle  $C_n$, the complete $\K_n$,  the complete bipartite $\K_{\frac{n}{2},\frac{n}{2}}$,  the complete multipartite graph $\K_{m[b]}$ with $m\geq 3,b\geq 2$ and $mb=n$,  the graph $\K_{\frac{n}{2},\frac{n}{2}}-\frac{n}{2}\K_2$, where $\frac{n}{2}$ is odd, and Paley graphs of prime order.

We now consider the case $S\cap -S=\varnothing$. If  $|S|=1$, then connectedness forces $\Gamma$ to be the directed cycle $\vec{C}_n$, giving part (II)(1).

Assume henceforth that $|S|\geq 2$. If $R(T)$ is normal in $G$, then by Theorem~\ref{2dt-normal-them-1},  $\Gamma$ is isomorphic to one of the following oriented graphs.
\begin{enumerate}
    \item $\overrightarrow{C_n}$.
    \item   $ G(p^m,r)$  satisfying Condition~\ref{p-power-normal-2dt-cond}.

\end{enumerate}

If $R(T)$ is not normal in $G$, then
by Proposition \ref{prop:no-Kn1-factor},
 \(\Gamma\cong
\Sigma[\overline{\K}_b]\), where $b\geq 2$ and  \(\Sigma\) is a \(2\)-distance-transitive normal circulant oriented graph as above.
In particular, if $\Sigma\cong \overrightarrow{C_r}$ for some $r\geq 3$, then
\(\Gamma\cong
\overrightarrow{C_r}[\overline{\K}_b]\cong C_r(b,1)\).

This completes the proof.
\qed

\end{document}